\documentclass[12pt]{amsart}
\usepackage{amsmath, amssymb, amsfonts}
\usepackage[all]{xypic}
\usepackage[pdftex]{graphicx}


\paperheight=297mm
\paperwidth=210mm
\setlength{\oddsidemargin}{0pt}
\setlength{\evensidemargin}{0pt} \setlength{\headheight}{12pt}
\setlength{\footskip}{36pt}
\setlength{\hoffset}{0in}\setlength{\voffset}{-24pt}
\setlength{\topmargin}{0pt}
\setlength{\headsep}{12pt} \setlength{\marginparwidth}{0pt}
\setlength{\marginparpush}{0pt} \setlength{\textwidth}{210mm}
\addtolength{\textwidth}{-2in} \setlength{\textheight}{297mm}
\addtolength{\textheight}{-24pt}
\addtolength{\textheight}{-2in}


\theoremstyle{plain}
\newtheorem{theorem}{Theorem}[section]
\newtheorem{lemma}[theorem]{Lemma}
\newtheorem{corollary}[theorem]{Corollary}
\newtheorem{prop}[theorem]{Proposition}

\newtheorem{conj}[theorem]{Conjecture}

\theoremstyle{remark}

\newtheorem{remark}[theorem]{Remark}

\newtheorem{example}[theorem]{Example}

\newtheorem*{note*}{Note}
\newtheorem*{remark*}{Remark}
\newtheorem*{example*}{Example}

\theoremstyle{definition}
\newtheorem*{definition*}{Definition}
\newtheorem*{hypothesis*}{Hypothesis}
\newtheorem*{assumptions*}{Assumptions}
\newtheorem{definition}[theorem]{Definition}


\newcommand{\Z}{\mathbb{Z}}

\newcommand{\Q}{\mathbb{Q}}
\newcommand{\C}{\mathbb{C}}
\newcommand{\N}{\mathbb{N}}
\newcommand{\F}{\mathbb{F}}
\newcommand{\Aff}{\mathrm{Aff}}

\newcommand{\Aut}{\mathrm{Aut}}
\newcommand{\Gal}{\mathrm{Gal}}

\newcommand{\GL}{\mathrm{GL}}

\newcommand{\nr}{\mathrm{nr}}
\newcommand{\End}{\mathrm{End}}
\newcommand{\Hom}{\mathrm{Hom}}

\newcommand{\res}{\mathrm{res}}
\newcommand{\quot}{\mathrm{quot}}

\newcommand{\Irr}{\mathrm{Irr}}
\newcommand{\ind}{\mathrm{ind}}
\newcommand{\PMod}{\mathrm{PMod}}

\newcommand{\Spec}{\mathrm{Spec}}
\newcommand{\ram}{\mathrm{ram}}
\newcommand{\ab}{\mathrm{ab}}
\newcommand{\Br}{\mathrm{Br}}

\numberwithin{equation}{section}


\newcommand{\mal}{^{\times}}
\newcommand{\et}{\mathrm{\acute{e}t}}

\newcommand{\perf}{\mathrm{perf}}
\newcommand{\tor}{_{\mathrm{tor}}}
\newcommand{\Det}{\mathrm{Det}}
\newcommand{\aug}{\mathrm{aug}}
\newcommand{\infl}{\mathrm{infl}}

\newcommand{\Dic}{\mathrm{Dic}}

\title[Hybrid Iwasawa algebras and the equivariant Iwasawa main conjecture]{Hybrid Iwasawa algebras and the \\
equivariant Iwasawa main conjecture}

\author{Henri Johnston}
\address{
Department of Mathematics\\
University of Exeter\\
Exeter\\
EX4 4QF\\
U.K.
}
\email{H.Johnston@exeter.ac.uk}
\urladdr{http://emps.exeter.ac.uk/mathematics/staff/hj241}

\author{Andreas Nickel}
\address{
Universit\"{a}t Bielefeld\\
Fakult\"{a}t f\"{u}r Mathematik\\
Postfach 100131\\
Universit\"{a}tsstr. 25\\
33501 Bielefeld\\
Germany}
\email{anickel3@math.uni-bielefeld.de}
\urladdr{http://www.math.uni-bielefeld.de/$\sim$anickel3/english.html}

\subjclass[2010]{11R23, 11R42}
\keywords{Iwasawa main conjecture, Iwasawa algebra, equivariant $L$-values}
\date{Version of 25th May 2016}
\begin{document}

\maketitle

\begin{abstract}
Let $p$ be an odd prime.
We give an unconditional proof of the equivariant Iwasawa main conjecture for totally real fields
for an infinite class of one-dimensional non-abelian $p$-adic Lie extensions.
Crucially, this result does not depend on the vanishing of the relevant Iwasawa $\mu$-invariant.
\end{abstract}


\section{Introduction}

Let $p$ be an odd prime.
Let $K$ be a totally real number field and let $K_{\infty}$ be the cyclotomic $\Z_{p}$-extension of $K$.
An admissible $p$-adic Lie extension $\mathcal{L}$ of $K$ is a Galois extension $\mathcal{L}$ of $K$ such that 
(i) $\mathcal{L}/K$ is unramified outside a finite set of primes $S$ of $K$,
(ii) $\mathcal{L}$ is totally real, (iii) $\mathcal{G} := \Gal(\mathcal{L}/K)$ is a compact 
$p$-adic Lie group, and (iv) $\mathcal{L}$ contains $K_{\infty}$. 
Let $M_{S}^{\ab}(p)$ be the maximal abelian $p$-extension of $\mathcal{L}$ unramified outside the set of primes above $S$.
Let $\Lambda(\mathcal{G}):=\Z_{p}[[\mathcal{G}]]$ denote the Iwasawa algebra of $\mathcal{G}$ over $\Z_{p}$
and let $X_{S}$ denote the (left) $\Lambda(\mathcal{G})$-module $\Gal(M_{S}^{\ab}(p) / \mathcal{L})$. 
Roughly speaking, the equivariant Iwasawa main conjecture (EIMC) relates $X_{S}$ to special values of Artin $L$-functions
via $p$-adic $L$-functions. This relationship can be expressed as the existence of a certain element in an algebraic $K$-group;
it is also conjectured that this element is unique.
 
There are at least three different versions of the EIMC. 
The first is due to Ritter and Weiss and deals with the case where $\mathcal{G}$ is a one-dimensional $p$-adic Lie group \cite{MR2114937},
and was proven  under a certain `$\mu=0$' hypothesis in a series of articles culminating in \cite{MR2813337}.
The second version follows the framework of Coates, Fukaya, Kato, Sujatha and Venjakob \cite{MR2217048} and 
was proven by Kakde \cite{MR3091976}, again assuming $\mu=0$.
This version is for $\mathcal{G}$ of arbitrary (finite) dimension and Kakde's proof uses a strategy of Burns and Kato to reduce to the one-dimensional case
(see Burns \cite{MR3294653}). Finally, Greither and Popescu \cite{MR3383600} have formulated and proven an EIMC via the Tate module of a certain Iwasawa-theoretic abstract $1$-motive, but they restricted their formulation to one-dimensional abelian extensions and the formulation itself
requires a $\mu=0$ hypothesis.
In \cite{MR3072281}, the second named author generalised this formulation (again assuming $\mu=0$) to the one-dimensional non-abelian case, and in the situation that all three formulations make sense (i.e.\ $\mathcal{G}$ is a one-dimensional $p$-adic Lie group and $\mu=0$), 
he showed that they are in fact all equivalent.
Venjakob \cite{MR3068897} has also compared the work of Ritter and Weiss to that of Kakde.

The classical Iwasawa $\mu=0$ conjecture (at $p$) is the assertion that for every number field $F$, the
Galois group of the maximal unramified abelian $p$-extension of $F_{\infty}$ is finitely generated as a $\Z_{p}$-module.
This conjecture was proven by Ferrero and Washington \cite{MR528968} in the case that $F/\Q$ is abelian, but little progress has been made since.
The $\mu=0$ hypothesis for an admissible $p$-adic Lie extension $\mathcal{L}/K$ discussed in the paragraph above 
is implied by the classical Iwasawa $\mu=0$ conjecture (at $p$) for a certain finite extension of $K$.
It follows from the result of Ferrero and Washington that the EIMC holds unconditionally when $\mathcal{L}/K$ 
is an admissible pro-$p$ extension and $K$ is abelian over $\Q$.

We wish to prove the EIMC in cases in which the $\mu=0$ hypothesis is not known. 
As a consequence, we must restrict to the case in which $\mathcal{G}$ is one-dimensional because the
`$\mathfrak{M}_{H}(G)$-conjecture' is required to even formulate the EIMC. 
This conjecture is known unconditionally in the one-dimensional case but in the more general case it is 
presently only known to hold under the $\mu=0$ hypothesis (see \S \ref{subsec:higher-rk}).
We remark that in the one-dimensional case, the $\mu=0$ hypothesis is equivalent to 
the Iwasawa module $X_{S}$ being finitely generated as a $\Z_{p}$-module; 
a detailed discussion of the relation to the classical Iwasawa $\mu=0$ conjecture in this setting is given in Remark \ref{rmk:mu=0}.

The main result of this article is the unconditional proof of the EIMC for an infinite class of admissible extensions with Galois group $\mathcal{G}$
a one-dimensional non-abelian $p$-adic Lie group and for which the $\mu=0$ hypothesis is not known.
A key ingredient is a result of Ritter and Weiss \cite{MR2114937} which, roughly speaking,
says that a version of the EIMC `over maximal orders' (or `character by character') holds without any $\mu=0$ hypothesis.
The proof uses Brauer induction to reduce to the abelian case, which is essentially equivalent to
the Iwasawa main conjecture for totally real fields proven by Wiles \cite{MR1053488}. 
Another key ingredient is the notion of `hybrid Iwasawa algebras' which is an adaptation of 
the notion of `hybrid $p$-adic group rings' first introduced by the present authors in \cite{MR3461042}.
Let $p$ be a prime and $G$ be a finite group with normal subgroup $N$.
The group ring $\Z_{p}[G]$ is said to be `$N$-hybrid' if $\Z_{p}[G]$ is isomorphic to the direct product of $\Z_{p}[G/N]$ and a maximal order.
Now suppose that $p$ is odd and that $\mathcal{G}$ is a one-dimensional $p$-adic Lie group with a finite normal subgroup $N$.
Then the Iwasawa algebra $\Lambda(\mathcal{G})$ is said to be `$N$-hybrid' 
if it decomposes into a direct product of $\Lambda(\mathcal{G}/N)$ and a maximal order.
By using the maximal order variant of the EIMC and certain functoriality properties, we show that the EIMC for the 
full extension (corresponding to $\mathcal{G}$) is equivalent to the EIMC for the sub-extension corresponding to $\mathcal{G}/N$.
There are many cases in which $\mu=0$ is not known for the full extension, but is known for the sub-extension,
and thus we obtain new unconditional results. 
However, we first need explicit criteria for $\Lambda(\mathcal{G})$ to be $N$-hybrid.
Since $\mathcal{G}$ is one-dimensional it decomposes as a semidirect product $\mathcal{G} = H \rtimes \Gamma$
where $H$ is finite and $\Gamma$ is isomorphic to $\Z_{p}$. 
Moreover, if $N$ is a finite normal subgroup of $\mathcal{G}$ then it must in fact be a (normal) subgroup of $H$.
We show that $\Lambda(\mathcal{G})$ is $N$-hybrid if and only if $\Z_{p}[H]$ is $N$-hybrid;
this is easy to see in the case that $\mathcal{G}$ is a direct product $H \times \Gamma$, 
but much more difficult in the general case. 
In \cite{MR3461042} we gave explicit criteria for $\Z_{p}[H]$ to be $N$-hybrid in terms of the degrees of the complex irreducible characters of $H$,
and thus the same criteria can be used to determine whether $\Lambda(\mathcal{G})$ is $N$-hybrid. 
We also study the behavior of $N$-hybrid
$p$-adic group rings $\Z_{p}[G]$ when we change the group $G$ and its normal subgroup $N$.
(As discussed in Remark \ref{rmk:applications-to-ETNC}, these new results on hybrid $p$-adic group rings also have applications 
to the equivariant Tamagawa number conjecture.)
\\ 

In \cite{non-abelian-Brumer-Stark}, we show that the EIMC implies the $p$-primary parts of refinements of the
imprimitive Brumer and Brumer-Stark conjectures.
These latter conjectures are classical when the relevant Galois group is abelian, and have been generalised to 
the non-abelian case by the second named author \cite{MR2976321} (independently,
they have been formulated in even greater generality by Burns \cite{MR2845620}).
By combining this with the main result of the present article, we give unconditional proofs of 
the non-abelian Brumer and Brumer-Stark conjectures in many cases. 
\\

This article is organised as follows. In \S \ref{sec:hybrid-group-rings} we review some material on hybrid
$p$-adic group rings, and show how new examples of such group rings can be obtained from existing examples.
In \S \ref{sec:hybrid-Iwasawa-alg} we generalise this notion to Iwasawa algebras and study its structure and basic properties.
In particular, we give explicit criteria for an Iwasawa algebra to be $N$-hybrid.
This enables us to give many examples of one-dimensional non-abelian $p$-adic Lie groups $\mathcal{G}$
such that the Iwasawa algebra $\Lambda(\mathcal{G})$ is $N$-hybrid for a non-trivial finite normal subgroup
$N$ of $\mathcal{G}$. 
In \S \ref{sec:EIMC} we give a slight reformulation of the EIMC that is convenient for our purposes. 
We also recall the functorial properties of the EIMC and reinterpret the maximal order variant of the EIMC of Ritter and Weiss \cite{MR2114937}.
The algebraic preparations of \S \ref{sec:hybrid-Iwasawa-alg} then permit us to verify the EIMC for many one-dimensional non-abelian
$p$-adic Lie extensions without assuming the $\mu=0$ hypothesis.

\subsection*{Acknowledgements}
It is a pleasure to thank Werner Bley, Ted Chinburg, Takako Fukaya, Lennart Gehrmann, Cornelius Greither, Annette Huber-Klawitter, 
Mahesh Kakde, Kazuya Kato, Daniel Macias Castillo, Cristian Popescu, J\"urgen Ritter, Sujatha, Otmar Venjakob, Christopher Voll,
Al Weiss and Malte Witte for helpful discussions and correspondence.
The authors also thank the referee for several helpful comments.
The second named author acknowledges financial support provided by the DFG within the Collaborative Research Center 701
`Spectral Structures and Topological Methods in Mathematics'.

\subsection*{Notation and conventions}
All rings are assumed to have an identity element and all modules are assumed
to be left modules unless otherwise  stated. We fix the following notation:

\medskip

\begin{tabular}{ll}
$S_{n}$ & the symmetric group on $n$ letters\\
$A_{n}$ & the alternating group on $n$ letters\\
$C_{n}$ & the cyclic group of order $n$\\
$D_{2n}$ & the dihedral group of order $2n$\\
$Q_{8}$ & the quaternion group of order $8$\\
$V_{4}$ & the subgroup of $A_{4}$ generated by double transpositions\\
$\F_{q}$ & the finite field with $q$ elements, where $q$ is a prime power\\
$\Aff(q)$ & the affine group isomorphic to $\F_{q} \rtimes \F_{q}^{\times}$ defined in Example \ref{ex:affine}\\
$v_{p}(x)$ & the $p$-adic valuation of $x \in \Q$\\
$R^{\times}$ & the group of units of a ring $R$\\
$\zeta(R)$ & the centre of a ring $R$\\
$M_{m \times n} (R)$ & the set of all $m \times n$ matrices with entries in a ring $R$\\
$\zeta_{n}$ & a primitive $n$th root of unity\\
$K_{\infty}$ & the cyclotomic $\Z_{p}$-extension of the number field $K$\\
$K^{+}$ & the maximal totally real subfield of $K$\\
$K^{c}$ & an algebraic closure of a field $K$ \\
$\Irr_{F}(G)$ & the set of $F$-irreducible characters of the (pro)-finite group $G$\\
& (with open kernel) where $F$ is a field of characteristic $0$

\end{tabular}

\section{Hybrid $p$-adic group rings} \label{sec:hybrid-group-rings}

We recall material on hybrid $p$-adic group rings from \cite[\S 2]{MR3461042}
and prove new results which provide many new examples.
We shall sometimes abuse notation by using the symbol $\oplus$ to denote the direct product of rings or orders.

\subsection{Background material}
Let $p$ be a prime and let $G$ be a finite group.
For a normal subgroup $N \unlhd G$, let $e_{N} = |N|^{-1}\sum_{\sigma \in N} \sigma$
be the associated central trace idempotent in the group algebra $\Q_{p}[G]$.
Then there is a ring isomorphism $\Z_{p}[G]e_{N} \simeq \Z_{p}[G/N]$.
We now specialise \cite[Definition 2.5]{MR3461042} to the case of $p$-adic group rings
(we shall not need the more general case of $N$-hybrid orders).

\begin{definition}
Let $N \unlhd G$. We say that the $p$-adic group ring $\Z_{p}[G]$ is \emph{$N$-hybrid}
if (i) $e_{N} \in \Z_{p}[G]$ (i.e. $p \nmid |N|$) and (ii) $\Z_{p}[G](1-e_{N})$ is a maximal
$\Z_{p}$-order in $\Q_{p}[G](1-e_{N})$.
\end{definition}

\begin{remark}
The group ring $\Z_{p}[G]$ is itself maximal if and only if $p$ does not divide $|G|$
if and only if $\Z_{p}[G]$ is $G$-hybrid. Moreover, $\Z_{p}[G]$ is always $\{1\}$-hybrid.
\end{remark}

For every field $F$ of characteristic $0$ and every finite group $G$, we denote by $\Irr_{F}(G)$
the set of $F$-irreducible characters of $G$.
Let $\Q_{p}^{c}$ be an algebraic closure of $\Q_{p}$.
If $x$ is a rational number, we let $v_{p}(x)$ denote its $p$-adic valuation.

\begin{prop}[{\cite[Proposition 2.7]{MR3461042}}]\label{prop:hybrid-criterion-groupring}
The group ring $\Z_{p}[G]$ is $N$-hybrid if and only if
for every $\chi \in \Irr_{\Q_{p}^{c}}(G)$ such that $N \not \leq \ker \chi$
we have $v_{p}(\chi(1))=v_{p}(|G|)$.
\end{prop}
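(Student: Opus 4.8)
The statement relates an integrality/maximality condition on $\Z_p[G](1-e_N)$ to a numerical condition on character degrees, so the natural approach is to pass to the Wedderburn decomposition of $\Q_p[G](1-e_N)$ and use the standard criterion (due to Jacobinski, or in the form recorded in Reiner's \emph{Maximal Orders}) for when a $\Z_p$-order in a simple algebra is maximal. First I would note that $e_N \in \Z_p[G]$ if and only if $p \nmid |N|$, so condition (i) of the definition is automatically encoded; henceforth assume $p \nmid |N|$. Then I would recall that $\Q_p[G]e_N \simeq \Q_p[G/N]$ and that the primitive central idempotents of $\Q_p[G]$ are partitioned according to whether they lie under $e_N$ or under $1-e_N$; concretely, a $\Q_p^c$-irreducible character $\chi$ (equivalently, the $\Q_p$-rational idempotent $e_\chi^{(p)}$ it generates) satisfies $e_\chi^{(p)} \le e_N$ precisely when $N \le \ker\chi$. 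Thus $\Q_p[G](1-e_N) = \bigoplus_{\chi} \Q_p[G]e_\chi^{(p)}$ where $\chi$ runs over $\Q_p$-conjugacy classes of $\chi \in \Irr_{\Q_p^c}(G)$ with $N \not\le \ker\chi$, and $\Z_p[G](1-e_N)$ is maximal if and only if each projection $\Z_p[G]e_\chi^{(p)}$ is a maximal order in the corresponding simple component.

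**Reduction to a single simple component.** The heart of the argument is the identification, for a fixed $\chi$, of when $\Z_p[G]e_\chi^{(p)}$ is maximal. Here I would invoke the theory of $p$-adic group algebras: the simple component of $\Q_p[G]$ associated to $\chi$ has the form $M_{n_\chi}(D_\chi)$ for a division algebra $D_\chi$ with centre $\Q_p(\chi)$, and the relevant fact is that $\Z_p[G]e_\chi^{(p)}$ is a maximal order in this component if and only if the $p$-adic valuation of the "contribution" of $\chi$ to $|G|$ is accounted for entirely by the ramification that a maximal order is forced to carry — more precisely, one compares $v_p(\chi(1))$ with $v_p(|G|)$ via the formula for the Schur index and the conductor-discriminant relationship. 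The cleanest route is to use the known result (e.g.\ the discussion in \cite{MR3461042} on which this paper builds, or the classical statement that $\Z_p[G]$ is maximal iff $p \nmid |G|$, applied component-wise after base change) that $\Z_p[G]e_\chi^{(p)}$ is maximal precisely when $v_p(\chi(1)) = v_p(|G|)$. Summing over the relevant $\chi$: $\Z_p[G](1-e_N)$ is maximal iff $v_p(\chi(1)) = v_p(|G|)$ for every $\chi \in \Irr_{\Q_p^c}(G)$ with $N \not\le \ker\chi$, which is exactly the claim. I should also double-check that the Galois action does not cause trouble: $v_p(\chi(1))$ is constant on $\Q_p$-conjugacy classes, so the condition "for every $\chi$" is unambiguous.

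**The main obstacle.** The delicate point is justifying the per-component criterion "$\Z_p[G]e_\chi^{(p)}$ maximal $\iff v_p(\chi(1)) = v_p(|G|)$" without simply quoting it. One direction is reasonably soft: if $p \nmid |G|$ then $\Z_p[G]$ is maximal and every component is, and $v_p(\chi(1)) = 0 = v_p(|G|)$; the subtlety is the ramified case where $p \mid |G|$ but a particular $\chi$ still has $v_p(\chi(1)) = v_p(|G|)$. For this I would lean on the local structure theory — writing $\Z_p[G]e_\chi^{(p)}$ as an order whose maximality is detected by its discriminant, and matching the $p$-adic valuation of that discriminant against what a maximal order would have, using that the difference is governed by $v_p(|G|) - v_p(\chi(1))$ (a consequence of the formula for the different of the group-ring order in terms of $|G|/\chi(1)$). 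Since the paper explicitly cites this as \cite[Proposition 2.7]{MR3461042}, the honest thing to do in this write-up is to recall that statement and explain how the partition of idempotents by the condition $N \le \ker\chi$ reduces the $N$-hybrid question to it; the genuinely new content is that bookkeeping, not a re-derivation of the single-component result. So my proof would be: (1) handle condition (i) via $p \nmid |N|$; (2) decompose $1-e_N$ as a sum of primitive central idempotents of $\Q_p[G]$ and identify those summands with the $\chi$ satisfying $N \not\le \ker\chi$; (3) observe that maximality of a direct product of orders is equivalent to maximality of each factor; (4) apply the single-component maximality criterion to conclude.
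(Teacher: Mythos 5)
There is a real gap in your argument: the per-component criterion you invoke in step (4), namely that $\Z_p[G]e_\chi^{(p)}$ is a maximal $\Z_p$-order if and only if $v_p(\chi(1)) = v_p(|G|)$, is false. Take $G = C_p$ and $\chi$ any nontrivial character, so that $e_\chi^{(p)} = 1 - e_G$; the projection $\Z_p[C_p]e_\chi^{(p)}$ is the full ring of integers $\Z_p[\zeta_p]$ of $\Q_p(\zeta_p)$, hence a maximal $\Z_p$-order, yet $v_p(\chi(1)) = 0 \neq 1 = v_p(|G|)$. Maximality of the projection cannot detect the defect because ramification of the centre $\Q_p(\chi)/\Q_p$ can absorb the discrepancy $v_p(|G|/\chi(1))$; your discriminant heuristic ignores this. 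The condition $v_p(\chi(1)) = v_p(|G|)$ is instead characterised by \emph{integrality of the idempotent}: $e_\chi^{(p)}$ lies in $\Z_p[G]$ precisely when $\chi$ belongs to a $p$-block of defect zero, and in that case $\Z_p[G]e_\chi^{(p)} \simeq M_{\chi(1) \times \chi(1)}(\mathcal{O}_{\Q_p(\chi)})$ is automatically maximal. This is the standard block-theoretic fact (see \cite[Proposition 46]{MR0450380}) which the paper itself uses in Proposition \ref{prop:chi-comp-max-order}, and whose converse is the content of the argument in Proposition \ref{prop:idempotent-implies-defect-zero}.

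Using this corrected criterion the argument goes through, and it simultaneously fills the second gap, your unjustified claim that condition (i) is ``automatically encoded''. For the ``if'' direction, the degree hypothesis gives $e_\chi^{(p)} \in \Z_p[G]$ for each $\chi$ with $N \not\leq \ker\chi$ together with maximality of $\Z_p[G]e_\chi^{(p)}$; summing over $\Q_p$-conjugacy classes yields $1 - e_N \in \Z_p[G]$, so $p \nmid |N|$ is a \emph{consequence} rather than an extra assumption, and $\Z_p[G](1-e_N)$ is genuinely the direct product $\prod_\chi \Z_p[G]e_\chi^{(p)}$ of maximal orders (the equality, not just the inclusion, requires the idempotents to lie in $\Z_p[G]$). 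For the ``only if'' direction, maximality of $\Z_p[G](1-e_N)$ forces it to contain every primitive central idempotent of $\Q_p[G](1-e_N)$, and since $1-e_N \in \Z_p[G]$ one obtains $e_\chi^{(p)} \in \Z_p[G]$; the integrality criterion then gives $v_p(\chi(1)) = v_p(|G|)$. In your sketch of this direction you argue from maximality of $\Z_p[G]e_\chi^{(p)}$ alone, which by the $C_p$ example does not suffice: you must use that $e_\chi^{(p)}$ actually lies in $\Z_p[G]$, and for that you need both conditions (i) and (ii) of the definition.
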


\begin{remark}
In the language of modular representation theory, when $v_{p}(\chi(1))=v_{p}(|G|)$
we say that ``$\chi$ belongs to a $p$-block of defect zero''.
\end{remark}

\subsection{Frobenius groups}\label{subsec:frobenius-groups}
We recall the definition and some basic facts about Frobenius groups and then use them to
provide many examples of hybrid group rings.
For further results and examples, we refer the reader to \cite[\S 2.3]{MR3461042}.

\begin{definition}
A \emph{Frobenius group} is a finite group $G$ with a proper non-trivial subgroup $H$
such that $H \cap gHg^{-1}=\{ 1 \}$ for all $g \in G-H$,
in which case $H$ is called a \emph{Frobenius complement}.
\end{definition}

\begin{theorem}\label{thm:frob-kernel}
A Frobenius group $G$ contains a unique normal subgroup $N$, known as the Frobenius kernel, such that
$G$ is a semidirect product $N \rtimes H$. Moreover:
\begin{enumerate}
\item $|N|$ and $[G:N]=|H|$ are relatively prime.
\item The Frobenius kernel $N$ is nilpotent.
\item If $K \unlhd G $ then either $K \unlhd N$ or $N \unlhd K$.
\item If $\chi \in \Irr_{\C}(G)$ such that  $N \not \leq \ker \chi$ then $\chi= \ind_{N}^{G}(\psi)$ for some $1 \neq \psi \in \Irr_{\C}(N)$.
\end{enumerate}
\end{theorem}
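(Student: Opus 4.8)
The plan is to prove the four assertions of Theorem~\ref{thm:frob-kernel} by assembling standard facts about Frobenius groups, most of which ultimately rest on Frobenius's classical theorem that the set $N = \{1\} \cup (G - \bigcup_{g \in G} gHg^{-1})$ is a normal subgroup of $G$ with $|N| = [G:H]$. I would first quote or sketch this existence-and-normality statement (its only known proof uses character theory, via induced characters and a clever integrality argument), and then deduce that $G = N \rtimes H$: since $H \cap N = \{1\}$ by construction and $|N| \cdot |H| = [G:H] \cdot |H| = |G|$, the product $NH$ is all of $G$ and the semidirect product decomposition follows. Uniqueness of $N$ as a normal subgroup with this property will follow from item~(iii) once that is established, or more directly from the fact that $N$ is characterized as the set of elements fixing no point in the natural action of $G$ on $G/H$ together with the identity.

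Next I would address the numerical and structural parts. Part~(1), that $\gcd(|N|, |H|) = 1$, is immediate from $|N| = [G:H]$ and Lagrange, since $|H|$ divides $|G| = |N|\cdot|H|$ and a common prime divisor of $|N|$ and $|H|$ would contradict $|N| = [G:H]$... more carefully: if a prime $\ell$ divided both $|N|$ and $|H|$, a Sylow argument shows $H$ would meet some conjugate of itself nontrivially, contradicting the Frobenius condition; I would phrase it via the coprimality $|N| = [G:H]$ directly, which is the cleanest route. Part~(2), nilpotence of $N$, is the deep theorem of Thompson that a finite group admitting a fixed-point-free automorphism of prime order is nilpotent; here $H$ acts on $N$ and any nontrivial $h \in H$ acts without nonidentity fixed points (this fixed-point-freeness is exactly the Frobenius condition translated to the action), so I would invoke Thompson's theorem as a black box, or alternatively cite Isaacs or another standard reference. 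Part~(3) is a short exercise: given $K \unlhd G$, if $K \not\leq N$ then $K$ contains an element outside $N$, hence an element of some conjugate of $H$; using normality of $K$ and the Frobenius property one shows $KN = G$ and $K \cap N \unlhd H$-invariantly, from which coprimality of orders forces $K \supseteq N$.

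For part~(4) I would argue as follows. Let $\chi \in \Irr_{\C}(G)$ with $N \not\leq \ker\chi$. Consider the restriction $\chi|_N$; it is a sum of $H$-conjugate irreducible constituents of $N$ by Clifford theory, and the key point is that the stabilizer in $G$ of any nontrivial $\psi \in \Irr_{\C}(N)$ appearing in $\chi|_N$ must equal $N$ itself. This is because $H$ acts on $\Irr_{\C}(N) \setminus \{1\}$ without nontrivial fixed points --- a consequence of fixed-point-freeness of the action of $H$ on $N$ (Brauer's permutation lemma relates fixed points on conjugacy classes to fixed points on irreducible characters, and a nontrivial $h \in H$ fixes only the trivial class of $N$, hence only the trivial character). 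So the inertia group $I_G(\psi) = N$, and Clifford's theorem gives $\chi = \ind_N^G(\psi)$ with $\psi \neq 1$ since $N \not\leq \ker\chi$. The main obstacle, and the only genuinely hard input, is part~(2): it is not elementary and rests on Thompson's thesis, so in practice the proof should simply cite it; everything else is a matter of organizing Frobenius's theorem, Clifford theory, and Brauer's permutation lemma in the right order. I would present the proof by first recording Frobenius's theorem (with a reference), then handling (1) and (3) by elementary counting, citing Thompson for (2), and finishing with the Clifford-theoretic argument for (4).
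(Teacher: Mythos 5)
Your proposal is correct, and the mathematical content aligns with the standard theory of Frobenius groups that the cited references contain. The paper itself, however, gives essentially no argument for this theorem: it simply cites \cite[\S 14A]{MR632548} for parts (i) and (iv), and \cite[10.5.6]{MR1357169} and \cite[Exercise~7, \S 8.5]{MR1357169} for parts (ii) and (iii). So while you are taking the ``same route'' in the sense of invoking Frobenius's theorem for the existence of the kernel, Thompson's theorem for nilpotence, and Clifford theory together with the fixed-point-free action on irreducible characters for (iv), you have actually sketched the proofs rather than merely cited them --- which is more informative than what the paper does, at the cost of being longer. One small point worth tightening in your write-up of (1): the cleanest derivation of coprimality is that $H$ acts fixed-point-freely by conjugation on $N - \{1\}$ (because $C_G(h) \leq H$ for every $1 \neq h \in H$ in a Frobenius group), so every orbit has size $|H|$ and therefore $|H|$ divides $|N| - 1$; this gives $\gcd(|N|, |H|) = 1$ directly, without the detour through a Sylow argument that your draft starts and then abandons.
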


\begin{proof}
For (i) and (iv) see \cite[\S 14A]{MR632548}.
For (ii) see \cite[10.5.6]{MR1357169} and for (iii) see  \cite[Exercise 7, \S 8.5]{MR1357169}.
\end{proof}

\begin{prop}[{\cite[Proposition 2.13]{MR3461042}}]\label{prop:frob-N-hybrid}
Let $G$ be a Frobenius group with Frobenius kernel $N$.
Then for every prime $p$ not dividing $|N|$, the group ring $\Z_{p}[G]$ is $N$-hybrid.
\end{prop}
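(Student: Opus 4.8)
The plan is to apply the criterion of Proposition \ref{prop:hybrid-criterion-groupring} directly, using the structural facts about Frobenius groups collected in Theorem \ref{thm:frob-kernel}. So let $G = N \rtimes H$ be a Frobenius group with Frobenius kernel $N$, and let $p$ be a prime with $p \nmid |N|$. By Proposition \ref{prop:hybrid-criterion-groupring}, it suffices to show that for every $\chi \in \Irr_{\Q_{p}^{c}}(G)$ with $N \not\leq \ker\chi$ we have $v_{p}(\chi(1)) = v_{p}(|G|)$.

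First I would reduce to characters over $\C$, or rather work with absolutely irreducible characters directly: an irreducible character over $\Q_{p}^{c}$ is the same as an irreducible character over $\C$ (both being splitting fields of characteristic $0$ for the finite group $G$), so $\Irr_{\Q_{p}^{c}}(G)$ may be identified with $\Irr_{\C}(G)$, and the condition $N \not\leq \ker\chi$ is unaffected. Next, given such a $\chi$ with $N \not\leq \ker\chi$, part (iv) of Theorem \ref{thm:frob-kernel} tells us that $\chi = \ind_{N}^{G}(\psi)$ for some nontrivial $\psi \in \Irr_{\C}(N)$. Hence $\chi(1) = [G:N] \cdot \psi(1) = |H| \cdot \psi(1)$.

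Now I would compute $v_{p}(\chi(1)) = v_{p}(|H|) + v_{p}(\psi(1))$. Since $\psi$ is an irreducible character of the finite group $N$, its degree $\psi(1)$ divides $|N|$, and because $p \nmid |N|$ we get $v_{p}(\psi(1)) = 0$. Also $v_{p}(|N|) = 0$, so $v_{p}(|G|) = v_{p}(|N| \cdot |H|) = v_{p}(|H|)$. Combining, $v_{p}(\chi(1)) = v_{p}(|H|) = v_{p}(|G|)$, which is exactly what the criterion demands. Finally, condition (i) of the definition of $N$-hybrid, namely $p \nmid |N|$, holds by hypothesis, so $e_{N} \in \Z_{p}[G]$; thus Proposition \ref{prop:hybrid-criterion-groupring} yields that $\Z_{p}[G]$ is $N$-hybrid.

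There is essentially no serious obstacle here: the entire argument is an immediate assembly of Theorem \ref{thm:frob-kernel}(iv) and Proposition \ref{prop:hybrid-criterion-groupring}, together with the elementary fact that character degrees divide the group order and that $p$ is coprime to $|N|$. The only point requiring any care is the identification of $\Irr_{\Q_{p}^{c}}(G)$ with $\Irr_{\C}(G)$ --- i.e.\ that $\Q_{p}^{c}$ is a splitting field for $G$ and that character degrees and kernels do not depend on the choice of characteristic-zero splitting field --- but this is standard and could be invoked without elaboration.
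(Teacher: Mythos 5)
Your proof is correct and follows essentially the same route as the paper's own argument: both invoke Theorem \ref{thm:frob-kernel}(iv) to see that $\chi$ is induced from a nontrivial character of $N$, so that $[G:N]$ divides $\chi(1)$, and then use coprimality of $|N|$ and $[G:N]$ (equivalently $p\nmid|N|$) together with Proposition \ref{prop:hybrid-criterion-groupring}. You are slightly more explicit about the $p$-adic valuation bookkeeping and the identification of $\Irr_{\Q_p^c}(G)$ with $\Irr_{\C}(G)$, but the underlying ideas are identical.
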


\begin{proof}
We repeat the short argument for the convenience of the reader.
Let $\chi \in \Irr_{\Q_{p}^{c}}(G)$ such that $N \not \leq \ker \chi$.
Then by Theorem \ref{thm:frob-kernel} (iv) $\chi$ is induced from a nontrivial irreducible character
of $N$ and so $\chi(1)$ is divisible by $[G:N]$.
However, $|N|$ and $[G:N]$ are relatively prime by Theorem \ref{thm:frob-kernel} (i) and so
Proposition \ref{prop:hybrid-criterion-groupring} now gives the desired result.
\end{proof}

We now give some examples, the first two of which were also given in \cite[\S 2.3]{MR3461042}.

\begin{example}\label{ex:metacyclic}
Let $p<q$ be distinct primes and assume that $p \mid (q-1)$.
Then there is an embedding $C_{p} \hookrightarrow \Aut(C_{q})$ and so there is
a fixed-point-free action of $C_{p}$ on $C_{q}$.
Hence the corresponding semidirect product $G = C_{q} \rtimes C_{p}$ is a Frobenius group
(see \cite[Theorem 2.12]{MR3461042} or \cite[\S 4.6]{MR2599132}, for example), and so $\Z_{p}[G]$ is $N$-hybrid with $N = C_{q}$.
\end{example}

\begin{example}\label{ex:affine}
Let $q$ be a prime power and let $\F_{q}$ be the finite field with $q$ elements.
The group $\Aff(q)$ of affine transformations on $\F_{q}$ is the group of transformations
of the form $x \mapsto ax +b$ with $a \in \F_{q}^{\times}$ and $b \in \F_{q}$.
Let $G=\Aff(q)$ and let $N=\{ x \mapsto x+b \mid b \in \F_{q} \}$.
Then $G$ is a Frobenius group with Frobenius kernel $N \simeq \F_{q}$ and is isomorphic to
the semidirect product $\F_{q} \rtimes \F_{q}^{\times}$ with the natural action.
Moreover, $G/N \simeq \F_{q}^{\times} \simeq C_{q-1}$ and $G$ has precisely one
non-linear irreducible complex character, which is rational-valued and of degree $q-1$.
Hence for every prime $p$ not dividing $q$, we have that $\Z_{p}[G]$ is $N$-hybrid
and is isomorphic to $\Z_{p}[C_{q-1}] \oplus M_{(q-1) \times (q-1)}(\Z_{p})$.
Note that in particular $\Aff(3) \simeq S_{3}$ and $\Aff(4) \simeq A_{4}$.
Thus $\Z_{2}[S_{3}] \simeq \Z_{2}[C_{2}] \oplus M_{2 \times 2}(\Z_{2})$
and $\Z_{3}[A_{4}] \simeq \Z_{3}[C_{3}] \oplus M_{3 \times 3}(\Z_{3})$.
\end{example}

\begin{example}\label{ex:dicyclic-complement}
Let $p$ be an odd prime and let $\Dic_{p} := \langle a,b \mid a^{2p}=1, a^{p}=b^{2}, b^{-1}ab = a^{-1} \rangle$ be the dicyclic group of order $4p$.
We recall a construction given in \cite[Chapter 14]{MR1828640}.
For every positive integer $n$ we let $\zeta_{n}$ denote a primitive $n$th root of unity.
There is an embedding $\iota$ of $\Dic_{p}$ into the subring $S_{p} := \Z[\frac{1}{2p}, \zeta_{2p},j]$ of the real quaternions;
here, $j^{2} = -1$ and $j \zeta_{2p} = \zeta_{2p}^{-1} j$.
We put $R_{p} := \Z[\frac{1}{2p}, \zeta_{p} + \zeta_{p}^{-1}] \subseteq S_{p}$.
Let $t$ and $k_{i}$, $1\leq i \leq t$ be positive integers and let $\mathfrak{p}_{i}$, $1 \leq i \leq t$ be maximal ideals of $R_{p}$.
Then for each $i$ there is a $R_{p} / \mathfrak{p}_{i}^{k_{i}}$-algebra isomorphism
$S_{p} / \mathfrak{p}_{i}^{k_{i}} S_{p} \simeq M_{2 \times 2} (R_{p}/\mathfrak{p}_{i}^{k_{i}})$ which
induces a fixed-point-free action of $\Dic_{p}$ on $N(\mathfrak{p}_{i}^{k_{i}}) := (R_{p} / \mathfrak{p}_{i}^{k_{i}})^{2}$ via $\iota$.
Thus $G := N \rtimes \Dic_{p}$ with $N := \prod_{i=1}^{t} N(\mathfrak{p}_{i}^{k_{i}})$ is a Frobenius group with Frobenius complement $\Dic_{p}$.
In fact, every Frobenius group with Frobenius complement isomorphic to $\Dic_{p}$
is of this type (see \cite[Theorem 14.4]{MR1828640}).
In particular, $\Z_{p}[G]$ is $N$-hybrid.
\end{example}

\begin{example} \label{ex:non-abelian-kernel}
Let $p$ and $q$ be primes, $f$ and $n$ positive integers such that $q>n>1$ and $q \mid (p^{f}-1)$.
Let $N$ be the subgroup of $\GL_{n}(\F_{p^{f}})$ comprising upper triangular matrices with all diagonal
entries equal to $1$.
There are pairwise distinct $b_{j} \in \F_{p^{f}}$, $1 \leq j \leq n$ such that $b_{j}^{q} = 1$.
Let $h$ be the diagonal matrix with entries $b_{1}, \dots, b_{n}$ and set $H:= \langle h \rangle$.
Then $G := N \rtimes H$ is a Frobenius group  of order $q p^{fn(n-1)/2}$ and so $\Z_{p}[G]$ is $N$-hybrid.
Moreover, the Frobenius kernel $N$ has nilpotency class $n-1$ (see \cite[Example 16.8b]{MR1645304})
and hence is complicated if $n$ is large.
\end{example}

\subsection{New $p$-adic hybrid group rings from old}
We now use character theory to show how new examples of hybrid $p$-adic group rings can be obtained from existing examples.

For a field $F$ of characteristic $0$, a finite group $G$ and (virtual) $F$-characters $\chi$ and $\psi$ of $G$,
we let $\langle \chi, \psi \rangle$ denote the usual inner product.

\begin{remark}
For any finite group $G$ with subgroup $H$, any prime $p$,
and any $\chi \in \Irr_{\Q_{p}^{c}}(G)$, we have $H \leq \ker\chi$ if and only if $\langle \res^{G}_{H} \chi, \psi \rangle = 0$
for every non-trivial $\psi \in \Irr_{\Q_{p}^{c}}(H)$. We shall use this easy observation several times (for different choices of
$G$, $H$ and $\chi$) in the proofs of Lemma \ref{lem:kernel-basechange} and Propositions \ref{prop:hybrid-basechange-down} and
\ref{prop:hybrid-basechange-up} below. 
\end{remark}

\begin{lemma}\label{lem:kernel-basechange}
Let $G$ be a finite group with subgroups $N \leq H \unlhd  G$.
Let $p$ be a prime.
Fix $\chi \in \Irr_{\Q_{p}^{c}}(G)$ and let $\eta \in \Irr_{\Q_{p}^{c}}(H)$ be an irreducible constituent of $\res^{G}_{H} \chi$.
Then:
\begin{enumerate}
\item If $N \leq \ker \chi$, then $N \leq \ker \eta$.
\item Assume in addition that $N \unlhd G$. Then $N \leq \ker \chi$ if and only if $N \leq \ker \eta$.
\end{enumerate}
\end{lemma}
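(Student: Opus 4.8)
The plan is to analyze the two implications separately, using the characterization of $H \leq \ker\chi$ in terms of restricted characters recorded in the Remark preceding the lemma. For part (i), suppose $N \leq \ker\chi$. Since $\eta$ is a constituent of $\res^G_H\chi$, every irreducible $\C$-representation (or $\Q_p^c$-representation) affording $\eta$ occurs inside the restriction to $H$ of a representation affording $\chi$, on which $N$ acts trivially because $N \leq \ker\chi$; hence $N$ acts trivially on the $\eta$-isotypic part as well, so $N \leq \ker\eta$. Concretely, one can argue at the level of characters: for any $n \in N$ one has $\chi(n) = \chi(1)$, and restricting and decomposing $\res^G_H\chi = \sum_i m_i \eta_i$ into irreducible constituents $\eta_i$ (with $\eta = \eta_1$, say) gives $\sum_i m_i \eta_i(n) = \sum_i m_i \eta_i(1)$; since $|\eta_i(n)| \leq \eta_i(1)$ for each $i$ with equality forcing $n \in \ker\eta_i$ (an element acts trivially iff its character value equals the degree), comparing real parts forces $\eta_i(n) = \eta_i(1)$ for every $i$, in particular for $\eta$. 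This gives $N \leq \ker\eta$.

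For the converse direction in part (ii), we now assume $N \unlhd G$ and $N \leq \ker\eta$, and we must deduce $N \leq \ker\chi$. The key point is that since $N$ is normal in $G$, the full $G$-conjugacy class of $\eta$ consists of characters of $H$ (as $H \unlhd G$), and conjugation by $g \in G$ fixes $N$ setwise; so if $N \leq \ker\eta$ then $N \leq \ker({}^g\eta)$ for every $g \in G$, because $\ker({}^g\eta) = g(\ker\eta)g^{-1} \supseteq gNg^{-1} = N$. By Clifford theory, all irreducible constituents of $\res^G_H\chi$ are $G$-conjugate to $\eta$, hence each of them contains $N$ in its kernel. Writing $\res^G_H\chi = e\sum_{j} {}^{g_j}\eta$ for the appropriate ramification integer $e$ and coset representatives $g_j$, we get that $N$ acts trivially on the space of the restricted representation, i.e.\ $\res^G_H\chi(n) = \chi(1)$ for all $n \in N$. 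But $N \subseteq H$, so $\chi(n) = \res^G_H\chi(n) = \chi(1)$ for all $n \in N$, which is exactly the statement $N \leq \ker\chi$.

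The routine facts being used are: (a) an element $g$ lies in $\ker\psi$ for an ordinary character $\psi$ iff $\psi(g) = \psi(1)$; (b) $N \unlhd G$, $H \unlhd G$ imply each $G$-conjugate of $\eta$ is again a character of $H$ with kernel containing $N$; and (c) Clifford's theorem, that $\res^G_H\chi$ is a multiple of the sum of a single $G$-orbit of irreducible characters of $H$. None of these steps is genuinely hard; the only point requiring a little care is that Clifford's theorem is usually stated for normal subgroups, so in part (ii) one really does need the extra hypothesis $N \unlhd G$ (equivalently $H \unlhd G$ with $\chi$ arbitrary) to control \emph{all} constituents of $\res^G_H\chi$, whereas in part (i) no normality of $N$ is needed because the inequality-plus-equality argument handles an arbitrary constituent directly. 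I expect the main (minor) obstacle to be simply making sure the characteristic-zero field $\Q_p^c$ plays no role — all the representation theory invoked works verbatim over any algebraically closed field of characteristic $0$, so one can freely transport between $\C$ and $\Q_p^c$, or just work over $\Q_p^c$ throughout.
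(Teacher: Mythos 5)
Your proof is correct, and while it proves the same statement, it does so by somewhat different mechanics than the paper. For part (i) the paper restricts to $N$ and uses the Clifford-theoretic decomposition $\res^G_H\chi = e\sum_{g\in R_\eta}\eta^g$ to show that $\langle\res^G_N\chi,\psi\rangle$ is a sum of non-negative terms that includes $\langle\res^H_N\eta,\psi\rangle$, forcing the latter to vanish. You instead argue directly with character values: since each $\eta_i(n)$ is a sum of $\eta_i(1)$ roots of unity, $|\eta_i(n)|\le\eta_i(1)$ with equality precisely when $n\in\ker\eta_i$, and the equality $\chi(n)=\chi(1)$ forces termwise equality. This is a genuinely more elementary route, and notably it does not use Clifford theory or the hypothesis $H\unlhd G$ at all for (i). For part (ii) both proofs invoke Clifford's theorem to say every constituent of $\res^G_H\chi$ is $G$-conjugate to $\eta$; the paper then transfers the hypothesis along the orbit via the inner-product identity $\langle\res^H_N\eta^g,\psi\rangle=\langle\res^H_N\eta,\psi^{g^{-1}}\rangle$, whereas you transfer it via the kernel identity $\ker(\eta^g)=g^{-1}(\ker\eta)g$ together with $gNg^{-1}=N$. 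The two uses of normality of $N$ are the same in spirit. Your closing remark on independence of the algebraically closed characteristic-zero base field is an appropriate justification for invoking the complex triangle inequality when the characters are a priori valued in $\Q_p^c$.
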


\begin{proof}
As $H \unlhd G$, we have a natural right action of $G$ on $\Irr_{\Q_{p}^{c}}(H)$;
namely, for each $g \in G$, $h \in H$ we have $\eta^{g}(h) = \eta(g^{-1}hg)$.
Let $G_{\eta} = \{g \in G \mid \eta^{g} = \eta \}$ be the stabiliser of $\eta$ in $G$ and
let $R_{\eta}$ be a set of right coset representatives of $G_{\eta}$ in $G$.
Then by Clifford theory (see \cite[Proposition 11.4]{MR632548}) we have
$\res^{G}_{H} \chi = e \sum_{g \in R_{\eta}} \eta^{g}$ for some positive integer $e$, and in particular
\begin{equation}\label{eqn:Clifford-theory}
\chi(1) = e [G:G_{\eta}] \eta(1).
\end{equation}
Let $\psi \in \Irr_{\Q_{p}^{c}}(N)$. Then we have
\begin{equation}\label{eqn:res-non-neg}
\langle \res^{G}_{N} \chi, \psi \rangle = \langle \res^{H}_{N} (\res^{G}_{H} \chi), \psi \rangle
= e \sum_{g \in R_{\eta}} \langle \res^{H}_{N} \eta^{g}, \psi \rangle \geq 0,
\end{equation}
since $\langle \res^{H}_{N} \eta^{g}, \psi \rangle \geq 0$ for every $g \in R_{\eta}$.

Suppose that $N \leq \ker \chi$.
Then $\langle \res^{G}_{N} \chi, \psi \rangle = 0$ for every non-trivial $\psi \in \Irr_{\Q_{p}^{c}}(N)$ and so by \eqref{eqn:res-non-neg} we have $\langle \res^{H}_{N} \eta, \psi \rangle = 0$. Hence $N \leq \ker \eta$, proving (i).

Now assume that $N \unlhd G$ and suppose conversely that $N \leq \ker \eta$.
Then $\langle \res^{H}_{N} \eta, \psi \rangle = 0$ for every non-trivial $\psi \in \Irr_{\Q_{p}^{c}}(N)$,
and for every $g \in R_{\eta}$ we have
\[
    \langle \res^{H}_{N} \eta^{g}, \psi \rangle = \langle \res^{H}_{N} \eta, \psi^{g^{-1}} \rangle = 0.
\]
Thus by \eqref{eqn:res-non-neg} we have $\langle \res^{G}_{N} \chi, \psi \rangle = 0$
for every non-trivial $\psi \in \Irr_{\Q_{p}^{c}}(N)$ and so  $N \leq \ker \chi$.
\end{proof}

The following proposition is a generalisation of \cite[Proposition 2.8 (iv)]{MR3461042}.

\begin{prop}\label{prop:hybrid-basechange-down}
Let $G$ be a finite group with normal subgroups $N, H \unlhd  G$.
Let $K$ be a normal subgroup of $H$ such that $K \leq N$. Let $p$ be a prime.
If  $\Z_{p}[G]$ is $N$-hybrid then $\Z_{p}[H]$ is $K$-hybrid.
\end{prop}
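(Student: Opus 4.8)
The plan is to use the character-theoretic criterion of Proposition~\ref{prop:hybrid-criterion-groupring} together with Lemma~\ref{lem:kernel-basechange}. First I would unwind what must be checked: since $p \nmid |N|$ is part of the hypothesis that $\Z_p[G]$ is $N$-hybrid, and $K \leq N$, we get $p \nmid |K|$ for free, so condition (i) of the definition holds for $\Z_p[H]$ being $K$-hybrid. It remains to verify condition (ii) via the criterion: I must show that for every $\chi \in \Irr_{\Q_p^c}(H)$ with $K \not\leq \ker\chi$, we have $v_p(\chi(1)) = v_p(|H|)$.

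So fix such a $\chi \in \Irr_{\Q_p^c}(H)$ with $K \not\leq \ker\chi$. The idea is to realize $\chi$ as a constituent of the restriction of some irreducible character of $G$ and then transfer the hypothesis. Since $H \unlhd G$, pick $\widetilde\chi \in \Irr_{\Q_p^c}(G)$ such that $\chi$ is a constituent of $\res^G_H \widetilde\chi$ (this exists because every irreducible character of $H$ appears in the restriction of some irreducible character of $G$, e.g.\ by Frobenius reciprocity applied to an induced character). Now I want to apply Lemma~\ref{lem:kernel-basechange}, but with its roles played by $N' := K$, $H' := H$, $G' := G$ — note $K \leq H \unlhd G$ and $K \unlhd G$ since $K \unlhd H \unlhd G$ and $K$ is characteristic-ish; actually one needs $K \unlhd G$, which requires a small argument: $K \unlhd H$ and $H \unlhd G$ do not in general give $K \unlhd G$, so I would instead note that the hypotheses give $K \leq N \unlhd G$, and to get $K \unlhd G$ I should double-check whether the statement intends $K \unlhd G$ or just $K \unlhd H$. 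Reading carefully, the statement only says $K \unlhd H$; so I will apply Lemma~\ref{lem:kernel-basechange}(i) (the direction not requiring $N \unlhd G$) in its contrapositive form: if $K \not\leq \ker\chi$ where $\chi = \eta$ is a constituent of $\res^G_H\widetilde\chi$, then $K \not\leq \ker\widetilde\chi$.

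Having concluded $K \not\leq \ker\widetilde\chi$, and since $K \leq N$, this forces $N \not\leq \ker\widetilde\chi$. Now the $N$-hybrid hypothesis on $\Z_p[G]$ and Proposition~\ref{prop:hybrid-criterion-groupring} give $v_p(\widetilde\chi(1)) = v_p(|G|)$. I then need to descend this to $\chi$. Using the Clifford-theory relation~\eqref{eqn:Clifford-theory} from the proof of Lemma~\ref{lem:kernel-basechange}, namely $\widetilde\chi(1) = e\,[G:G_\eta]\,\eta(1)$ with $\eta = \chi$, we get $v_p(\chi(1)) \leq v_p(\widetilde\chi(1)) = v_p(|G|)$. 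Since $[G:G_\eta]$ divides $[G:H]$ (as $H \leq G_\eta$ because $H$ acts trivially by conjugation on characters of $H$ up to the standard identification — more precisely $\eta^h = \eta$ for $h \in H$), we have $v_p([G:G_\eta]) \leq v_p([G:H]) = v_p(|G|) - v_p(|H|)$. Also $e\,\eta(1)$ divides $\widetilde\chi(1)$, so $v_p(e\,\eta(1)) \leq v_p(|G|)$, but more usefully I combine: $v_p(|G|) = v_p(\widetilde\chi(1)) = v_p(e) + v_p([G:G_\eta]) + v_p(\chi(1)) \leq v_p([G:G_\eta]) + v_p(\chi(1)) \leq v_p(|G|) - v_p(|H|) + v_p(\chi(1))$, hence $v_p(\chi(1)) \geq v_p(|H|)$; combined with the trivial inequality $v_p(\chi(1)) \leq v_p(|H|)$ (since $\chi(1)$ divides $|H|$), we conclude $v_p(\chi(1)) = v_p(|H|)$, as required.

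The main obstacle I anticipate is the bookkeeping around normality of $K$ in $G$ and making sure I invoke the correct part of Lemma~\ref{lem:kernel-basechange}: part (ii) would give a cleaner iff but needs $K \unlhd G$, which is not assumed, whereas part (i) suffices for the contrapositive I actually need. The other point requiring care is the divisibility $[G:G_\eta] \mid [G:H]$, which rests on $H \leq G_\eta$; this is immediate from the definition of the conjugation action of $G$ on $\Irr_{\Q_p^c}(H)$ since inner automorphisms of $H$ fix characters of $H$. Everything else is a short computation with $p$-adic valuations using~\eqref{eqn:Clifford-theory}.
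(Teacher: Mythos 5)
Your strategy is the same as the paper's — realise the given $\chi \in \Irr_{\Q_p^c}(H)$ as a constituent of the restriction of some $\widetilde\chi \in \Irr_{\Q_p^c}(G)$, apply Lemma \ref{lem:kernel-basechange}~(i) in contrapositive form to get $K \not\leq \ker\widetilde\chi$ and hence $N \not\leq \ker\widetilde\chi$, invoke the $N$-hybrid hypothesis to get $v_p(\widetilde\chi(1)) = v_p(|G|)$, and then descend via the Clifford-theory relation \eqref{eqn:Clifford-theory}. Up to this point everything is fine, including your correct observation that only part~(i) of the lemma is available (since $K \unlhd G$ is not assumed).

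However, the final $p$-adic valuation computation has a genuine gap. You write $v_p(e) + v_p([G:G_\eta]) + v_p(\chi(1)) \leq v_p([G:G_\eta]) + v_p(\chi(1))$, but since $e$ is a positive integer this inequality runs the wrong way: $v_p(e) \geq 0$ gives $\geq$, not $\leq$. Carrying the computation through correctly, the identity $v_p(e) + v_p([G:G_\eta]) + v_p(\chi(1)) = v_p(|G|)$ together with $v_p([G:G_\eta]) \leq v_p([G:H])$ only yields $v_p(\chi(1)) \geq v_p(|H|) - v_p(e)$, which is not enough unless you can control $v_p(e)$. The missing ingredient is exactly the nontrivial divisibility fact that the paper cites: $e$ divides $[G_\eta : H]$ (\cite[Theorem 21.3]{MR1645304}). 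Once you know that, the equality $v_p(e\,\chi(1)) = v_p(|G_\eta|) = v_p([G_\eta : H]) + v_p(|H|)$, combined with $v_p(\chi(1)) \leq v_p(|H|)$ and $v_p(e) \leq v_p([G_\eta:H])$, forces both to be equalities, giving $v_p(\chi(1)) = v_p(|H|)$ as required. Without that input the argument does not close.
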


\begin{proof}
Fix $\eta \in \Irr_{\Q_{p}^{c}}(H)$ such that $K \not\leq \ker \eta$.
By Proposition \ref{prop:hybrid-criterion-groupring} we have to show that $v_{p}(\eta(1)) = v_{p}(|H|)$.
Let $\chi \in \Irr_{\Q_{p}^{c}}(G)$ be any irreducible constituent of $\ind^{G}_{H}  \eta$.
Then by Frobenius reciprocity we have  $\langle\chi, \ind^{G}_{H}  \eta \rangle = \langle \res^{G}_{H} \chi, \eta \rangle \neq 0$.
By Lemma \ref{lem:kernel-basechange} (i) we have $K \not\leq \ker \chi$ and a fortiori $N \not\leq \ker \chi$.
Then again by Proposition \ref{prop:hybrid-criterion-groupring} we find that
$v_{p}(\chi(1)) = v_{p}(|G|)$ as $\Z_{p}[G]$ is $N$-hybrid by assumption.
This and equation \eqref{eqn:Clifford-theory} imply $v_{p}(e \eta(1)) = v_{p}(|G_{\eta}|) = v_{p}([G_{\eta}:H] \cdot |H|)$.
But $\eta(1)$ divides $|H|$ whereas $e$ divides $[G_{\eta}:H]$ by \cite[Theorem 21.3]{MR1645304}, so we must have
$v_{p}(e) = v_{p}([G_{\eta}:H])$ and $v_{p}(\eta(1)) = v_{p}(|H|)$, as desired.
\end{proof}

The following proposition is a generalisation of \cite[Lemma 2.9]{MR3461042}.

\begin{prop} \label{prop:hybrid-basechange-up}
Let $G$ be a finite group with normal subgroups $N \unlhd H \unlhd G$ such that $N \unlhd G$.
Let $p$ be a prime and assume that $p \nmid [G:H]$.
Then $\Z_{p}[G]$ is $N$-hybrid if and only if $\Z_{p}[H]$ is $N$-hybrid.
\end{prop}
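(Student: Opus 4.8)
The plan is to deduce both implications from the character-theoretic criterion of Proposition~\ref{prop:hybrid-criterion-groupring}, arguing exactly as in the proof of Proposition~\ref{prop:hybrid-basechange-down}. For the direction ``$\Z_{p}[G]$ is $N$-hybrid $\Longrightarrow$ $\Z_{p}[H]$ is $N$-hybrid'' I would simply invoke Proposition~\ref{prop:hybrid-basechange-down} with $K=N$: since $N\unlhd H$, $N\unlhd G$, and trivially $N\leq N$, all of its hypotheses are satisfied. Note that this implication needs no assumption on $[G:H]$; the hypothesis $p\nmid[G:H]$ is only used for the converse.

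For the direction ``$\Z_{p}[H]$ is $N$-hybrid $\Longrightarrow$ $\Z_{p}[G]$ is $N$-hybrid'', assume $\Z_{p}[H]$ is $N$-hybrid and $p\nmid[G:H]$. Then $p\nmid|N|$, so condition~(i) of the definition of $N$-hybrid holds for $\Z_{p}[G]$, and $v_{p}(|G|)=v_{p}(|H|)$. By Proposition~\ref{prop:hybrid-criterion-groupring} it suffices to show that every $\chi\in\Irr_{\Q_{p}^{c}}(G)$ with $N\not\leq\ker\chi$ satisfies $v_{p}(\chi(1))=v_{p}(|H|)$. Fix such a $\chi$ and let $\eta\in\Irr_{\Q_{p}^{c}}(H)$ be an irreducible constituent of $\res^{G}_{H}\chi$. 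Since $N\unlhd G$, Lemma~\ref{lem:kernel-basechange}~(ii) gives $N\not\leq\ker\eta$, and hence $v_{p}(\eta(1))=v_{p}(|H|)$ because $\Z_{p}[H]$ is $N$-hybrid. Writing $G_{\eta}$ for the stabiliser of $\eta$ in $G$ and using Clifford theory as in~\eqref{eqn:Clifford-theory}, we have $\chi(1)=e[G:G_{\eta}]\eta(1)$ for some positive integer $e$ dividing $[G_{\eta}:H]$ (by \cite[Theorem 21.3]{MR1645304}). As $H\leq G_{\eta}\leq G$, both $[G:G_{\eta}]$ and $[G_{\eta}:H]$ divide $[G:H]$, which is prime to $p$; therefore $v_{p}(e)=v_{p}([G:G_{\eta}])=0$, and so $v_{p}(\chi(1))=v_{p}(\eta(1))=v_{p}(|H|)=v_{p}(|G|)$, as required.

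The argument is routine given the preceding results; the only points needing care are verifying that the hypotheses of Lemma~\ref{lem:kernel-basechange}~(ii) and Proposition~\ref{prop:hybrid-basechange-down} are met (in particular that $N\unlhd G$ is available), and orienting the Clifford-theory decomposition correctly so that the divisibility $e\mid[G_{\eta}:H]$ can be applied with $H\leq G_{\eta}\leq G$. I do not anticipate a substantial obstacle.
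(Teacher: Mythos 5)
Your proposal is correct and follows essentially the same route as the paper's own proof: the forward direction via Proposition~\ref{prop:hybrid-basechange-down} with $K=N$, and the converse via Lemma~\ref{lem:kernel-basechange}~(ii), the Clifford-theory decomposition~\eqref{eqn:Clifford-theory}, and the criterion of Proposition~\ref{prop:hybrid-criterion-groupring}. The only cosmetic difference is that the paper argues simply from $\eta(1)\mid\chi(1)\mid|G|$ together with $v_{p}(\eta(1))=v_{p}(|G|)$, whereas you compute $v_{p}(e)=v_{p}([G:G_{\eta}])=0$ explicitly; both are fine.
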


\begin{proof}
If $\Z_{p}[G]$ is $N$-hybrid then $\Z_{p}[H]$ is $N$-hybrid by Proposition \ref{prop:hybrid-basechange-down} with $K=N$.
Suppose conversely that $\Z_{p}[H]$ is $N$-hybrid and assume that $p \nmid [G:H]$.
Let $\chi \in \Irr_{\Q_{p}^{c}}(G)$ such that $N \not\leq \ker \chi$ and let $\eta$ be an irreducible constituent of $\res^{G}_{H} \chi$.
Then $N \not\leq \ker \eta$ by Lemma \ref{lem:kernel-basechange} (ii) and $\eta(1)$ divides $\chi(1)$ by \eqref{eqn:Clifford-theory}.
However, we have $v_{p}(\eta(1)) = v_{p}(|H|) = v_{p}(|G|)$ by assumption, and so we must also have that $v_{p}(\chi(1)) = v_{p}(|G|)$.
Thus $\Z_{p}[G]$ is $N$-hybrid by Proposition \ref{prop:hybrid-criterion-groupring}.
\end{proof}

\begin{example}\label{ex:S4-A4-V4}
Let $p=3$, $G=S_{4}$, $H=A_{4}$ and $N = V_{4}$. Then the hypotheses of Proposition \ref{prop:hybrid-basechange-up} are satisfied.
Hence $\Z_{3}[S_{4}]$ is $V_{4}$-hybrid if and only if $\Z_{3}[A_{4}]$ is $V_{4}$-hybrid.
In fact, $\Z_{3}[A_{4}]$ is indeed $V_{4}$-hybrid since $A_{4}$ is a Frobenius group with Frobenius kernel $V_{4}$
(see Example \ref{ex:affine}) and so $\Z_{3}[S_{4}]$ is also $V_{4}$-hybrid.
However, $S_{4}$ is \emph{not} a Frobenius group (see \cite[Example 2.18]{MR3461042}).
Thus Proposition \ref{prop:hybrid-basechange-up} can be used to give examples which do not come directly from
Proposition \ref{prop:frob-N-hybrid}.
\end{example}

\begin{example} \label{ex:affine-Frobenius}
Let $q = \ell^{n}$ be a prime power and let $\phi: \F_{q} \rightarrow \F_{q}$, $x \mapsto x^{\ell}$ be the Frobenius automorphism.
Each $c \in \F_{q}\mal$ defines a map $m_{c}: \F_{q} \rightarrow \F_{q}$, $x \mapsto c \cdot x$.
We may consider $\phi$ and $m_{c}$ as elements of $\GL_{n}(\F_{\ell})$. Then $\phi m_{c} \phi^{-1} = m_{c^{\ell}}$
and we may form the semidirect product $\F_{q}\mal \rtimes \langle \phi \rangle$ inside $\GL_{n}(\F_{\ell})$.
Moreover, the action of $\F_{q}\mal \rtimes \langle \phi \rangle$ on $\F_{q}$ gives a semidirect product
$G := \F_{q} \rtimes (\F_{q}\mal \rtimes \langle \phi \rangle)$.
Then the group $\Aff(q) \simeq \F_{q} \rtimes \F_{q}^{\times}$ of affine transformations on $\F_{q}$
naturally identifies with a normal subgroup of $G$, and $N = \F_{q}$ is normal in both $G$ and $\Aff(q)$.
However, $\Z_{p}[\Aff(q)]$ is $N$-hybrid for every prime $p \neq  \ell$ by Example \ref{ex:affine}.
If we further suppose that $p$ does not divide $n = [G : \Aff(q)]$, then $\Z_{p}[G]$
is $N$-hybrid by Proposition \ref{prop:hybrid-basechange-up}.
Note that this recovers Example \ref{ex:S4-A4-V4} since $G \simeq S_{4}$ when $\ell=n=2$.
\end{example}

\begin{remark}\label{rmk:applications-to-ETNC}
Burns and Flach \cite{MR1884523} formulated
the equivariant Tamagawa number conjecture (ETNC)
for any motive over $\Q$ with the action of a semisimple $\Q$-algebra, 
describing the leading term at $s=0$ of an equivariant motivic $L$-function in terms of certain 
cohomological Euler characteristics. 
The present authors introduced hybrid $p$-adic groups rings in \cite{MR3461042} and used them to prove many new cases of the 
$p$-part of the ETNC for Tate motives; 
the same methods can also be applied to several related conjectures.
Thus the new results on $p$-adic group rings given here combined with the results of \cite{MR3461042}
give unconditional proofs of the $p$-part of the ETNC for Tate motives and related conjectures in many new cases.
\end{remark}

\section{Hybrid Iwasawa algebras}\label{sec:hybrid-Iwasawa-alg}

\subsection{Iwasawa algebras of one-dimensional $p$-adic Lie groups}\label{subsec:Iwasawa-algebras}
Let $p$ be an odd prime and let $\mathcal{G}$ be a profinite group containing a finite normal subgroup $H$ 
such that $\mathcal{G}/H \simeq \overline{\Gamma}$ where $\overline\Gamma$ is a pro-$p$-group isomorphic to $\Z_{p}$.
The argument given in \cite[\S 1]{MR2114937} shows that the short exact sequence
\[
1 \longrightarrow H \longrightarrow \mathcal{G} \longrightarrow \overline{\Gamma} \longrightarrow 1
\]
splits. Thus we obtain a semidirect product $\mathcal{G} = H \rtimes \Gamma$ where $\Gamma \leq \mathcal{G}$ and $\Gamma \simeq \overline{\Gamma} \simeq \Z_{p}$. In other words, $\mathcal{G}$ is  a one-dimensional $p$-adic Lie group.
The Iwasawa algebra of $\mathcal{G}$ is
\[
\Lambda(\mathcal{G}) := \Z_{p}[[\mathcal{G}]] = \varprojlim \Z_{p}[\mathcal{G}/\mathcal{N}],
\]
where the inverse limit is taken over all open normal subgroups $\mathcal{N}$ of $\mathcal{G}$.
If $F$ is a finite field extension of $\Q_{p}$  with ring of integers $\mathcal{O}=\mathcal{O}_{F}$,
we put $\Lambda^{\mathcal{O}}(\mathcal{G}) := \mathcal{O} \otimes_{\Z_{p}} \Lambda(\mathcal{G}) = \mathcal{O}[[\mathcal{G}]]$.
We fix a topological generator $\gamma$ of $\Gamma$.
Since any homomorphism $\Gamma \rightarrow \Aut(H)$ must have open kernel, we may choose a natural number $n$ such that $\gamma^{p^n}$ is central in $\mathcal{G}$; we fix such an $n$.
 As $\Gamma_{0} := \Gamma^{p^n} \simeq \Z_{p}$, there is a ring isomorphism
$R:=\mathcal{O}[[\Gamma_{0}]] \simeq \mathcal{O}[[T]]$ induced by $\gamma^{p^n} \mapsto 1+T$
where $\mathcal{O}[[T]]$ denotes the power series ring in one variable over $\mathcal{O}$.
If we view $\Lambda^{\mathcal{O}}(\mathcal{G})$ as an $R$-module (or indeed as a left $R[H]$-module), there is a decomposition
\begin{equation*}\label{eq:Lambda-R-decomp}
\Lambda^{\mathcal{O}}(\mathcal{G}) = \bigoplus_{i=0}^{p^n-1} R[H] \gamma^{i}.
\end{equation*}
Hence $\Lambda^{\mathcal{O}}(\mathcal{G})$ is finitely generated as an $R$-module and is an $R$-order in the separable $E:=Quot(R)$-algebra
$\mathcal{Q}^{F} (\mathcal{G})$, the total ring of fractions of $\Lambda^{\mathcal{O}}(\mathcal{G})$, obtained
from $\Lambda^{\mathcal{O}}(\mathcal{G})$ by adjoining inverses of all central regular elements.
Note that $\mathcal{Q}^{F} (\mathcal{G}) =  E \otimes_{R} \Lambda^{\mathcal{O}}(\mathcal{G})$ and that by
\cite[Lemma 1]{MR2114937} we have $\mathcal{Q}^{F} (\mathcal{G}) = F \otimes_{\Q_{p}} \mathcal{Q}(\mathcal{G})$,
where $\mathcal{Q}(\mathcal{G}) := \mathcal{Q}^{\Q_{p}}(\mathcal{G})$.

\subsection{Characters and central primitive idempotents} \label{subsec:idempotents}
For any field $K$ of characteristic $0$ let $\Irr_{K}(\mathcal{G})$ the set of $K$-irreducible characters of $\mathcal{G}$ with open kernel.
Fix a character $\chi \in \Irr_{\Q_{p}^{c}}(\mathcal{G})$ and let $\eta$ be an irreducible constituent of
$\res^{\mathcal{G}}_{H} \chi$.
Then $\mathcal{G}$ acts on $\eta$ as $\eta^{g}(h) = \eta(g^{-1}hg)$
for $g \in \mathcal{G}$, $h \in H$, and following \cite[\S 2]{MR2114937} we set
\[
St(\eta) := \{g \in \mathcal{G}: \eta^g = \eta \}, \quad e(\eta) := \frac{\eta(1)}{|H|} \sum_{h \in H} \eta(h^{-1}) h,
\quad e_{\chi} := \sum_{\eta \mid \res^{\mathcal{G}}_{H} \chi} e(\eta).
\]
By \cite[Corollary to Proposition 6]{MR2114937} $e_{\chi}$ is a primitive central idempotent of
$\mathcal{Q}^{c}(\mathcal{G}) := \Q_{p}^{c} \otimes_{\Q_{p}} \mathcal{Q}(\mathcal{G})$.
In fact, every primitive central idempotent of $\mathcal{Q}^{c}(\mathcal{G})$ is of this form
and $e_{\chi} = e_{\chi'}$ if and only if $\chi = \chi' \otimes \rho$ for some character $\rho$ of $\mathcal{G}$ of type $W$
(i.e.~$\res^{\mathcal{G}}_{H} \rho = 1$).
The irreducible constituents of $\res^{\mathcal{G}}_{H} \chi$ are precisely the conjugates of $\eta$
under the action of $\mathcal{G}$, each occurring with the same multiplicity $z_{\chi}$ by \cite[Proposition 11.4]{MR632548}. By \cite[Lemma 4]{MR2114937} we have $z_{\chi}=1$ and thus we also have equalities
\begin{equation}\label{eq:idem-sum}
\res^{\mathcal{G}}_{H} \chi = \sum_{i=0}^{w_{\chi}-1} \eta^{\gamma^{i}},
\quad
e_{\chi} = \sum_{i=0}^{w_{\chi}-1} e(\eta^{\gamma^{i}}) = \frac{\chi(1)}{|H|w_{\chi}}\sum_{h \in H} \chi(h^{-1})h,
\end{equation}
where $w_{\chi} := [\mathcal{G} : St(\eta)]$.
Note that  $\chi(1) = w_{\chi} \eta(1)$ and that
$w_{\chi}$ is a power of $p$ since $H$ is a subgroup of $St(\eta)$.

\subsection{Working over sufficiently large $p$-adic fields}\label{subsec:sufficiently-large}
We now specialise to the case where $F/\Q_{p}$ is a finite extension over which both
characters $\chi$ and $\eta$ have realisations.
Let $V_{\chi}$ denote a realisation of $\chi$ over $F$.	
By \cite[Proposition 5]{MR2114937}, there exists a unique element
$\gamma_{\chi} \in \zeta(\mathcal{Q}^{c}(\mathcal{G})e_{\chi})$ such that $\gamma_{\chi}$
acts trivially on $V_{\chi}$ and $\gamma_{\chi}= gc$ where $g \in \mathcal{G}$ with $(g \bmod H) = \gamma^{w_{\chi}}$
and $c \in (\Q_{p}^{c}[H]e_{\chi})^{\times}$. Moreover, $\gamma_{\chi}= gc=cg$.

\begin{lemma}\label{lem:c-in-F[H]}
In fact  $c \in (F[H]e_{\chi})^{\times}$ and so
$\gamma_{\chi} \in \zeta(\mathcal{Q}^{F}(\mathcal{G})e_{\chi})$.
\end{lemma}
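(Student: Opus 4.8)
The plan is to pin down $c$ by exploiting that $\gamma_{\chi} = gc$ is already known to lie in the \emph{centre} of $\mathcal{Q}^c(\mathcal{G})e_\chi$, together with the explicit description of $e_\chi$ in \eqref{eq:idem-sum}, which shows that $e_\chi$ — and hence $\Q_p^c[H]e_\chi$ — is already defined over $F$ (indeed $e_\chi = \frac{\chi(1)}{|H|w_\chi}\sum_{h\in H}\chi(h^{-1})h$, and the central character $h \mapsto \chi(h^{-1})$ restricted to the relevant block takes values in $F$ since $\chi$ is realisable over $F$). Thus $F[H]e_\chi$ makes sense as a subring of $\Q_p^c[H]e_\chi$, and the content of the lemma is that the particular unit $c$ produced by \cite[Proposition 5]{MR2114937} does not leave this subring.

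First I would fix the element $g \in \mathcal{G}$ with $(g \bmod H) = \gamma^{w_\chi}$, so that $c = g^{-1}\gamma_\chi$. Since $g$ has finite order modulo the centre (indeed $\gamma^{w_\chi}$ becomes central after raising to a $p$-power), conjugation by $g$ is an $F$-algebra automorphism of $F[H]e_\chi$, and the key point is that $\gamma_\chi$ commutes with every element of $\mathcal{Q}^c(\mathcal{G})e_\chi$, in particular with $g$ and with all of $\Q_p^c[H]e_\chi$. I would then consider the action of $\Gal(\Q_p^c/F)$ on everything: $\mathcal{G}$, and hence $\mathcal{Q}(\mathcal{G})$, is defined over $\Q_p$, the idempotent $e_\chi$ is fixed by $\Gal(\Q_p^c/F)$ because $\chi$ is realisable over $F$ (so its character values lie in $F$), and therefore $\Gal(\Q_p^c/F)$ acts on $\mathcal{Q}^c(\mathcal{G})e_\chi$ and on its centre. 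The uniqueness clause in \cite[Proposition 5]{MR2114937} — $\gamma_\chi$ is the \emph{unique} element of $\zeta(\mathcal{Q}^c(\mathcal{G})e_\chi)$ acting trivially on $V_\chi$ and of the form $g'c'$ with $(g' \bmod H)=\gamma^{w_\chi}$ — is what does the work: applying $\sigma \in \Gal(\Q_p^c/F)$ to $\gamma_\chi = gc$ gives $\gamma_\chi^\sigma = g c^\sigma$ (as $g \in \mathcal{G}$ is Galois-fixed), and $\gamma_\chi^\sigma$ is again central in $\mathcal{Q}^c(\mathcal{G})e_\chi$ and acts trivially on $V_\chi^\sigma$. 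Since $V_\chi$ is defined over $F$, $V_\chi^\sigma \cong V_\chi$ as $\mathcal{G}$-representations (even $=V_\chi$ after identification), so $\gamma_\chi^\sigma$ satisfies the same characterising properties as $\gamma_\chi$; by uniqueness $\gamma_\chi^\sigma = \gamma_\chi$, whence $c^\sigma = c$ for all $\sigma \in \Gal(\Q_p^c/F)$. Therefore $c \in (\Q_p^c[H]e_\chi)^{\times} \cap \zeta(\mathcal{Q}^c(\mathcal{G})e_\chi)^{\Gal(\Q_p^c/F)}$; a Galois-descent argument ($F$-vector space of Galois invariants in a $\Q_p^c$-vector space with semilinear action) gives $c \in (F[H]e_\chi)^{\times}$, and then $\gamma_\chi = gc \in \mathcal{Q}^F(\mathcal{G})e_\chi$ and is central there, giving the claim.

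The main obstacle I anticipate is making the Galois-equivariance of the realisation precise: one must check that $V_\chi$ being realisable over $F$ genuinely forces $\gamma_\chi$ to be $\Gal(\Q_p^c/F)$-fixed, i.e. that "acting trivially on $V_\chi$" is a Galois-stable condition once $V_\chi$ descends to $F$. This hinges on identifying $V_\chi^\sigma$ with $V_\chi$ compatibly with the $\mathcal{G}$-action and checking that $\gamma_\chi^\sigma$ lands in the correct block $\mathcal{Q}^c(\mathcal{G})e_{\chi^\sigma} = \mathcal{Q}^c(\mathcal{G})e_\chi$ (the last equality again because $e_\chi$ has $F$-rational coefficients). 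Once that bookkeeping is in place, the uniqueness in \cite[Proposition 5]{MR2114937} finishes everything cleanly, and the final assertion $\gamma_\chi \in \zeta(\mathcal{Q}^F(\mathcal{G})e_\chi)$ is immediate since $\mathcal{Q}^F(\mathcal{G})e_\chi = F \otimes_{\Q_p} \mathcal{Q}(\mathcal{G})e_\chi$ contains both $g$ and $c$ and centrality is inherited from $\mathcal{Q}^c(\mathcal{G})e_\chi$.
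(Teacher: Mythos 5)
Your argument is correct, but it takes a genuinely different route from the paper's. The paper's proof is direct and constructive: it re-runs the Ritter--Weiss construction of $\gamma_{\chi}=gc$ with $V_{\chi}$ taken over $F$ from the outset, observing that $g^{-1}$ restricted to each $H$-irreducible summand $e(\eta^{\gamma^{i}})V_{\chi}$ is an $F$-linear automorphism and hence lies in $F[H]e(\eta^{\gamma^{i}})\simeq\End_{F}(e(\eta^{\gamma^{i}})V_{\chi})$ (this uses that $\eta$, and not just $\chi$, is realisable over $F$); summing these pieces gives $c\in(F[H]e_{\chi})^{\times}$ with essentially no further work. Your proof instead proceeds by Galois descent: you never recompute $c$, but show that the characterising properties of $\gamma_{\chi}$ in \cite[Proposition 5]{MR2114937} --- centrality in $\mathcal{Q}^{c}(\mathcal{G})e_{\chi}$, trivial action on $V_{\chi}$, and the shape $gc'$ --- are all preserved under $\sigma\in\Gal(\Q_{p}^{c}/F)$, so that uniqueness forces $\gamma_{\chi}^{\sigma}=\gamma_{\chi}$ and hence $c^{\sigma}=c$. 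The one point you flag as needing care does indeed go through: since $V_{\chi}^{c}:=\Q_{p}^{c}\otimes_{F}V_{\chi}$ carries a semilinear $\Gal(\Q_{p}^{c}/F)$-action satisfying $\sigma(x\cdot w)=x^{\sigma}\cdot\sigma(w)$ for $x\in\Q_{p}^{c}[H]$, applying $\sigma$ to the identity $\gamma_{\chi}\cdot w=w$ gives $\gamma_{\chi}^{\sigma}\cdot\sigma(w)=\sigma(w)$, so $\gamma_{\chi}^{\sigma}$ acts trivially as well. The trade-off: the paper's argument is shorter and uses only the structure of the block $F[H]e(\eta^{\gamma^{i}})$, while yours is softer, leaning on the uniqueness clause plus Galois descent without ever needing the matrix-algebra identification of the $H$-blocks over $F$.
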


\begin{proof}
We recall the definition of $\gamma_{\chi} := gc$ given in the proof of \cite[Proposition 5]{MR2114937}, the only difference
being that there $V_{\chi}$ is defined over $\Q_{p}^{c}$ rather than $F$.
Choose $g \in \mathcal{G}$ such that $(g \bmod H) = \gamma^{w_{\chi}}$.
Then $g \in St(\eta^{\gamma^{i}})$ for each $i$,
and it acts on $V_{\chi} = \oplus_{i=0}^{w_{\chi}-1} e(\eta^{\gamma^{i}}) V_{\chi}$ componentwise.
Since, by \cite[Lemma 4]{MR2114937}, each $e(\eta^{\gamma^{i}}) V_{\chi}$ is $H$-irreducible,
\[
g^{-1} \mid_{e(\eta^{\gamma^{i}})V_{\chi}} =: c(\eta^{\gamma^{i}}) \in F[H]e(\eta^{\gamma^{i}}) \simeq \End_{F}(e(\eta^{\gamma^{i}})V_{\chi})
\simeq M_{\eta(1) \times \eta(1)}(F).
\]
Now we set $c:=\sum_{i=0}^{w_{\chi}-1} c(\eta^{\gamma^{i}})$, and we see that $c \in (F[H]e_{\chi})^{\times}$.
\end{proof}

By \cite[Proposition 5]{MR2114937}, the element $\gamma_{\chi}$
generates a procyclic $p$-subgroup $\Gamma_{\chi}$ of $(\mathcal{Q}^{F}(\mathcal{G})e_{\chi})^{\times}$.
Let $\Lambda^{\mathcal{O}}(\Gamma_{\chi})$ be the integral domain $\mathcal{O}[[\Gamma_{\chi}]]$
with field of fractions $\mathcal{Q}^{F}(\Gamma_{\chi})$.

\begin{lemma}\label{lem:unique-max-order-in-centre}
$\Lambda^{\mathcal{O}}(\Gamma_{\chi})$ is the unique maximal $R$-order in $\zeta(\mathcal{Q}^{F}(\mathcal{G})e_{\chi})\simeq \mathcal{Q}^{F}(\Gamma_{\chi})$.
\end{lemma}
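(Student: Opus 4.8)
The plan is to show that $\Lambda^{\mathcal{O}}(\Gamma_{\chi})$ is an $R$-order in $\zeta(\mathcal{Q}^{F}(\mathcal{G})e_{\chi})$ and that it is integrally closed; since a commutative domain that is a module-finite integrally closed $R$-algebra inside its field of fractions is automatically the unique maximal $R$-order, this will suffice. First I would recall that $\gamma_{\chi} = gc$ with $(g \bmod H) = \gamma^{w_{\chi}}$ and $c \in (F[H]e_{\chi})^{\times}$, so $\gamma_{\chi}^{p^m}$ lies in $\mathcal{G} \cdot (F[H]e_{\chi})^{\times}$ and, for $m$ large enough that $p^m \cdot w_{\chi}$ is a multiple of the fixed integer $p^n$ used to define $\Gamma_0 = \Gamma^{p^n}$, the image of $\gamma_{\chi}^{p^m}$ in $\mathcal{G}/H$ generates a subgroup of $\Gamma_0$. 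This means $\gamma_{\chi}^{p^m}$ differs from an element of $\Gamma_0$ (which maps to a scalar in $\zeta(\mathcal{Q}^{F}(\mathcal{G})e_{\chi})$) by a unit of $F[H]e_{\chi}$, hence $\mathcal{O}[[\Gamma_{\chi}]]$ is a finitely generated module over $R = \mathcal{O}[[\Gamma_0]]$: concretely $\Lambda^{\mathcal{O}}(\Gamma_{\chi}) = \bigoplus_{i=0}^{p^m-1} R' \gamma_{\chi}^{i}$ where $R'$ is the (finite, since $\gamma_{\chi}^{p^m} \in R[H]e_{\chi}$ is integral over $R$) $R$-algebra generated by $\gamma_{\chi}^{p^m}$. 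Thus $\Lambda^{\mathcal{O}}(\Gamma_{\chi})$ is an $R$-order in $\mathcal{Q}^{F}(\Gamma_{\chi}) = \zeta(\mathcal{Q}^{F}(\mathcal{G})e_{\chi})$, the last equality by \cite[Proposition 5]{MR2114937}.

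Next I would establish integral closedness. The key point is that $\Lambda^{\mathcal{O}}(\Gamma_{\chi}) = \mathcal{O}[[\Gamma_{\chi}]]$, being the completed group algebra over $\mathcal{O}$ of a group $\Gamma_{\chi} \simeq \Z_p$, is itself a power series ring $\mathcal{O}[[S]]$ in one variable over $\mathcal{O}$ (via a topological generator of $\Gamma_{\chi}$, which we know to exist from \cite[Proposition 5]{MR2114937}); a formal power series ring over a complete DVR is a regular, hence normal (integrally closed), Noetherian domain. Since $\mathcal{Q}^{F}(\Gamma_{\chi})$ is its field of fractions and any maximal $R$-order $\mathfrak{M}$ containing $\Lambda^{\mathcal{O}}(\Gamma_{\chi})$ consists of elements of $\mathcal{Q}^{F}(\Gamma_{\chi})$ integral over $R$ — and every element integral over $R$ is a fortiori integral over the larger ring $\Lambda^{\mathcal{O}}(\Gamma_{\chi})$, hence already lies in it by normality — we get $\mathfrak{M} = \Lambda^{\mathcal{O}}(\Gamma_{\chi})$. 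This gives both existence and uniqueness of the maximal $R$-order.

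I expect the main obstacle to be the bookkeeping in the first step: verifying cleanly that $\Lambda^{\mathcal{O}}(\Gamma_{\chi})$ is \emph{module-finite} over $R$ and that its fraction field is all of $\zeta(\mathcal{Q}^{F}(\mathcal{G})e_{\chi})$ rather than a proper subfield. This rests on controlling the relation between the topological generator $\gamma_{\chi}$ of $\Gamma_{\chi}$ and the fixed generator $\gamma^{p^n}$ of $\Gamma_0$ — essentially on the fact, supplied by \cite[Proposition 5]{MR2114937}, that $\gamma_{\chi}$ generates a procyclic \emph{pro-$p$} group whose image in $\mathcal{G}/H$ is an open (hence finite-index, in the $\Z_p$ sense) subgroup, so that a suitable $p$-power of $\gamma_{\chi}$ lands in $\Gamma_0 \cdot (F[H]e_{\chi})^{\times}$. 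Once that structural fact is in hand, the rest is standard commutative algebra: a one-dimensional complete regular local ring (or power series ring over $\mathcal{O}$) is its own integral closure, and this forces the maximal order to coincide with $\Lambda^{\mathcal{O}}(\Gamma_{\chi})$. One should also remark that $\zeta(\mathcal{Q}^{F}(\mathcal{G})e_{\chi}) \simeq \mathcal{Q}^{F}(\Gamma_{\chi})$ is indeed a field (not merely a product), which again is part of \cite[Proposition 5]{MR2114937} together with the fact that $\Gamma_{\chi} \simeq \Z_p$ has $\mathcal{O}[[\Gamma_{\chi}]]$ an integral domain.
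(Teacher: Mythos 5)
Your core strategy is the same as the paper's: once one knows that $\Lambda^{\mathcal{O}}(\Gamma_{\chi}) = \mathcal{O}[[\Gamma_{\chi}]]$ is an $R$-order in $\zeta(\mathcal{Q}^{F}(\mathcal{G})e_{\chi})\simeq \mathcal{Q}^{F}(\Gamma_{\chi})$, maximality and uniqueness follow because $\mathcal{O}[[\Gamma_{\chi}]]\simeq\mathcal{O}[[T]]$ is an integrally closed (indeed regular local) commutative domain. The paper states the $R$-order claim as an immediate consequence of the isomorphism $\mathcal{Q}^{F}(\Gamma_{\chi})\simeq\zeta(\mathcal{Q}^{F}(\mathcal{G})e_{\chi})$ coming from \cite[Proposition 6]{MR2114937}, citing Ritter--Weiss for the structural input, whereas you try to prove the $R$-order property directly from $\gamma_{\chi}=gc$.

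That extra fill-in is where your argument has a gap. You assert that $\gamma_{\chi}^{p^m}\in R[H]e_{\chi}$ and hence is integral over $R$. But the element $c$ only satisfies $c\in(F[H]e_{\chi})^{\times}$ (Lemma \ref{lem:c-in-F[H]}); there is no reason a priori that $c$, or even $c^{p^m}$, has coefficients in $\mathcal{O}$, so $\gamma_{\chi}^{p^m}=g^{p^m}c^{p^m}$ need not lie in $R[H]e_{\chi}$. (Integrality of $c$ over $\mathcal{O}$ is exactly what is established later, under the additional defect-zero hypothesis, in the proof of Proposition \ref{prop:chi-comp-max-order}; it is not available here without hypotheses on $\chi$.) Similarly, ``differs from an element of $\Gamma_{0}$ by a unit of $F[H]e_{\chi}$'' is not on its own enough for module-finiteness, since units of $F[H]e_{\chi}$ are not units of $R[H]e_{\chi}$. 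What is true, and what one actually needs, is that since $\gamma_{\chi}^{p^m}$ and $\gamma^{a}e_{\chi}$ (for $a=p^m w_{\chi}$ divisible by $p^{n}$) are both central, their ratio lies in $\zeta(F[H]e_{\chi})\simeq F$, so $\gamma_{\chi}^{p^m}=\lambda\,\gamma^{a}e_{\chi}$ for some scalar $\lambda\in F^{\times}$; and then one must argue that $\lambda$ is a $1$-unit of $\mathcal{O}$ (using that $\gamma_{\chi}$ generates a procyclic \emph{pro-$p$} group, so its $p^{m}$-th powers tend to $1$). The cleanest fix is simply to cite the precise content of \cite[Propositions 5--6]{MR2114937}, as the paper does, for the fact that $\Lambda^{\mathcal{O}}(\Gamma_{\chi})$ is an $R$-order with fraction field $\zeta(\mathcal{Q}^{F}(\mathcal{G})e_{\chi})$; your normality-plus-commutativity argument for maximality and uniqueness is then correct and is exactly the paper's.
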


\begin{proof}
By \cite[Proposition 6]{MR2114937}, $\mathcal{Q}^{F}(\Gamma_{\chi})$ is contained in $\mathcal{Q}^{F}(\mathcal{G})e_{\chi}$,
and $\gamma_{\chi} \in \mathcal{Q}^{F}(\Gamma_{\chi})$ induces an isomorphism
$\mathcal{Q}^{F}(\Gamma_{\chi}) \stackrel{\simeq}{\longrightarrow} \zeta(\mathcal{Q}^{F}(\mathcal{G})e_{\chi})$.
Therefore $\Lambda^{\mathcal{O}}(\Gamma_{\chi})$ is an $R$-order in $\zeta(\mathcal{Q}^{F}(\mathcal{G})e_{\chi})$.
Moreover, $\Lambda^{\mathcal{O}}(\Gamma_{\chi})$ is a maximal $R$-order since there is an isomorphism of commutative rings
 $\Lambda^{\mathcal{O}}(\Gamma_{\chi}) = \mathcal{O}[[\Gamma_{\chi}]] \simeq \mathcal{O}[[T]]$ and $\mathcal{O}[[T]]$ is integrally closed. Uniqueness follows from commutativity of $\mathcal{Q}^{F}(\Gamma_{\chi})$.
\end{proof}

\subsection{Maximal order $e_{\chi}$-components of Iwasawa algebras}
We give criteria for `$e_{\chi}$-components' of Iwasawa algebras of one-dimensional $p$-adic Lie groups to be maximal orders
in the case that $F/\Q_{p}$ is a sufficiently large finite extension. Moreover,
we give an explicit description of such components.

\begin{prop}\label{prop:chi-comp-max-order}
Let  $\chi \in \Irr_{\Q_{p}^{c}}(\mathcal{G})$ and let $\eta$ be an irreducible constituent of
$\res^{\mathcal{G}}_{H} \chi$.
Let $F/\Q_{p}$ be a finite extension over which both characters $\chi$ and $\eta$ have realisations
and let $\mathcal{O}=\mathcal{O}_{F}$ be its ring of integers.
Suppose that $v_{p}(\eta(1))=v_{p}(|H|)$.
Then $e_{\chi} \in \Lambda^{\mathcal{O}}(\mathcal{G})$,
$\zeta(\Lambda^{\mathcal{O}}(\mathcal{G})e_{\chi}) = \Lambda^{\mathcal{O}}(\Gamma_{\chi})$
and there is an isomorphism of $R:=\mathcal{O}[[\Gamma_{0}]]$-orders
\[
\Lambda^{\mathcal{O}}(\mathcal{G})e_{\chi} \simeq M_{\chi(1) \times \chi(1)}(\Lambda^{\mathcal{O}}(\Gamma_{\chi})).
\]
Moreover, these are maximal $R$-orders and as rings are isomorphic to $M_{\chi(1) \times \chi(1)}(\mathcal{O}[[T]])$.
\end{prop}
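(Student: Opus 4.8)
The plan is to upgrade the rational structure theory of Ritter and Weiss for $\mathcal{Q}^{F}(\mathcal{G})e_{\chi}$ to an integral statement, the decisive point being that the defect-zero hypothesis $v_{p}(\eta(1))=v_{p}(|H|)$ forces the auxiliary central unit $\gamma_{\chi}$ of \cite{MR2114937} to lie in $\Lambda^{\mathcal{O}}(\mathcal{G})$. First I would dispose of the assertion $e_{\chi}\in\Lambda^{\mathcal{O}}(\mathcal{G})$: the hypothesis says that $\eta$ lies in a $p$-block of $H$ of defect zero, so by standard block theory $e(\eta)\in\mathcal{O}[H]$ and $\mathcal{O}[H]e(\eta)$ is a maximal $\mathcal{O}$-order in the simple component $F[H]e(\eta)$, which is split because $\eta$ is realised over $F$; hence $\mathcal{O}[H]e(\eta)\simeq M_{\eta(1)\times\eta(1)}(\mathcal{O})$. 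Conjugation by $\gamma^{i}$ is a ring automorphism of $\mathcal{O}[H]$ (as $H\unlhd\mathcal{G}$) sending $e(\eta)$ to $e(\eta^{\gamma^{i}})$, so each $e(\eta^{\gamma^{i}})\in\mathcal{O}[H]$; these are pairwise orthogonal, so $e_{\chi}=\sum_{i=0}^{w_{\chi}-1}e(\eta^{\gamma^{i}})\in\mathcal{O}[H]\subseteq\Lambda^{\mathcal{O}}(\mathcal{G})$, with $\mathcal{O}[H]e_{\chi}\simeq\prod_{i=0}^{w_{\chi}-1}M_{\eta(1)\times\eta(1)}(\mathcal{O})$; and since $e_{\chi}$ is an idempotent of $\Lambda^{\mathcal{O}}(\mathcal{G})$ that is central in $\mathcal{Q}^{c}(\mathcal{G})$, it is central in $\Lambda^{\mathcal{O}}(\mathcal{G})$.

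The heart of the argument is to show $\gamma_{\chi}\in(\Lambda^{\mathcal{O}}(\mathcal{G})e_{\chi})^{\times}$. By \cite{MR2114937} and the proof of Lemma \ref{lem:c-in-F[H]}, $\gamma_{\chi}=gc$ with $g\in\mathcal{G}$, $(g\bmod H)=\gamma^{w_{\chi}}$, and $c=\sum_{i=0}^{w_{\chi}-1}c(\eta^{\gamma^{i}})$ where $c(\eta^{\gamma^{i}})=g^{-1}|_{e(\eta^{\gamma^{i}})V_{\chi}}\in F[H]e(\eta^{\gamma^{i}})$. I would fix a $\mathcal{G}$-stable $\mathcal{O}$-lattice $L$ in $V_{\chi}$. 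Because the $e(\eta^{\gamma^{i}})$ lie in $\mathcal{O}[H]$ and sum to the identity, $L=\bigoplus_{i}e(\eta^{\gamma^{i}})L$ with each summand an $M_{\eta(1)\times\eta(1)}(\mathcal{O})$-lattice, hence free of rank $\eta(1)$ over $\mathcal{O}$; and since $(g\bmod H)=\gamma^{w_{\chi}}$ lies in $St(\eta)$, conjugation by $g$ fixes every $\eta^{\gamma^{i}}$, so the operator $g^{-1}$, which preserves $L$, preserves each summand $e(\eta^{\gamma^{i}})L$. Therefore $c(\eta^{\gamma^{i}})=g^{-1}|_{e(\eta^{\gamma^{i}})L}$ lies in $\GL_{\eta(1)}(\mathcal{O})\subseteq\mathcal{O}[H]e(\eta^{\gamma^{i}})$, so $c\in(\mathcal{O}[H]e_{\chi})^{\times}$ and both $\gamma_{\chi}=gc$ and $\gamma_{\chi}^{-1}=c^{-1}g^{-1}$ lie in $\Lambda^{\mathcal{O}}(\mathcal{G})e_{\chi}$. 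As $\gamma_{\chi}$ is central in $\mathcal{Q}^{c}(\mathcal{G})e_{\chi}$ it is central in $\Lambda^{\mathcal{O}}(\mathcal{G})e_{\chi}$, so $\Lambda^{\mathcal{O}}(\Gamma_{\chi})=\mathcal{O}[[\Gamma_{\chi}]]$ sits inside $\zeta(\Lambda^{\mathcal{O}}(\mathcal{G})e_{\chi})$ as an $R$-order; since it is a maximal $R$-order by Lemma \ref{lem:unique-max-order-in-centre} and $\zeta(\Lambda^{\mathcal{O}}(\mathcal{G})e_{\chi})$ is itself an $R$-order in $\zeta(\mathcal{Q}^{F}(\mathcal{G})e_{\chi})\simeq\mathcal{Q}^{F}(\Gamma_{\chi})$, the two coincide, proving $\zeta(\Lambda^{\mathcal{O}}(\mathcal{G})e_{\chi})=\Lambda^{\mathcal{O}}(\Gamma_{\chi})$.

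For the matrix structure I would pass to the corner cut out by $e:=e(\eta)$. The orthogonal idempotents $e(\eta^{\gamma^{i}})$, $0\le i<w_{\chi}$, sum to $e_{\chi}$ and are conjugate inside $(\Lambda^{\mathcal{O}}(\mathcal{G})e_{\chi})^{\times}$ via the units $\gamma^{i}e_{\chi}$, so the standard Peirce argument gives $\Lambda^{\mathcal{O}}(\mathcal{G})e_{\chi}\simeq M_{w_{\chi}\times w_{\chi}}(e\Lambda^{\mathcal{O}}(\mathcal{G})e)$. Using the decomposition of $\Lambda^{\mathcal{O}}(\mathcal{G})$ as a free $R$-module recalled in \S\ref{subsec:Iwasawa-algebras}, together with $e(\eta)e(\eta^{g})=0$ for $g\notin St(\eta)$, one computes $e\Lambda^{\mathcal{O}}(\mathcal{G})e=e\,\Lambda^{\mathcal{O}}(St(\eta))\,e$; this corner is topologically generated as a ring by $\mathcal{O}[H]e\simeq M_{\eta(1)\times\eta(1)}(\mathcal{O})$, by $Re$, and by $\gamma^{w_{\chi}}e$, and in fact $\gamma^{w_{\chi}}e$ already lies in the subring generated by $\mathcal{O}[H]e$ and $\gamma_{\chi}e$ (since $\gamma_{\chi}=gc$ with $g\equiv\gamma^{w_{\chi}}\pmod{H}$ and $c\in(\mathcal{O}[H]e_{\chi})^{\times}$), while $Re\subseteq\zeta(e\Lambda^{\mathcal{O}}(\mathcal{G})e)=\Lambda^{\mathcal{O}}(\Gamma_{\chi})e$ by the previous paragraph. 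Hence $e\Lambda^{\mathcal{O}}(\mathcal{G})e$ is generated by $M_{\eta(1)\times\eta(1)}(\mathcal{O})$ and the central subring $\Lambda^{\mathcal{O}}(\Gamma_{\chi})e$.

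This produces a surjection of $\Lambda^{\mathcal{O}}(\Gamma_{\chi})$-algebras $M_{\eta(1)\times\eta(1)}(\Lambda^{\mathcal{O}}(\Gamma_{\chi}))=M_{\eta(1)\times\eta(1)}(\mathcal{O})\otimes_{\mathcal{O}}\Lambda^{\mathcal{O}}(\Gamma_{\chi})\twoheadrightarrow e\Lambda^{\mathcal{O}}(\mathcal{G})e$. After extending scalars to $\mathcal{Q}^{F}(\Gamma_{\chi})$ both sides become central simple $\mathcal{Q}^{F}(\Gamma_{\chi})$-algebras of dimension $\eta(1)^{2}$ (the right-hand side being $e\mathcal{Q}^{F}(\mathcal{G})e$, which the structure theory of \cite{MR2114937} identifies with $M_{\eta(1)\times\eta(1)}(\mathcal{Q}^{F}(\Gamma_{\chi}))$), so the surjection is an isomorphism generically; since the left-hand side is torsion-free over $\Lambda^{\mathcal{O}}(\Gamma_{\chi})$, it is an isomorphism. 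Combining with the Peirce decomposition and $\chi(1)=w_{\chi}\eta(1)$ gives the asserted isomorphism of $R$-orders $\Lambda^{\mathcal{O}}(\mathcal{G})e_{\chi}\simeq M_{w_{\chi}\times w_{\chi}}(M_{\eta(1)\times\eta(1)}(\Lambda^{\mathcal{O}}(\Gamma_{\chi})))=M_{\chi(1)\times\chi(1)}(\Lambda^{\mathcal{O}}(\Gamma_{\chi}))$; this is $M_{\chi(1)\times\chi(1)}(\mathcal{O}[[T]])$ as a ring because $\Lambda^{\mathcal{O}}(\Gamma_{\chi})\simeq\mathcal{O}[[T]]$, and it is a maximal $R$-order because a full matrix ring over the maximal $R$-order $\Lambda^{\mathcal{O}}(\Gamma_{\chi})$ in the field $\mathcal{Q}^{F}(\Gamma_{\chi})$ is again maximal. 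The main obstacle is the second paragraph — the integrality of $\gamma_{\chi}$ — which is precisely where the defect-zero hypothesis enters in an essential rather than merely formal way, via the splitting of the $\mathcal{G}$-lattice $L$ along the $e(\eta^{\gamma^{i}})$; the remaining steps are bookkeeping with Morita equivalence and with the maximality and uniqueness statements of Lemma \ref{lem:unique-max-order-in-centre}.
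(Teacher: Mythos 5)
Your proposal is correct, and the first two stages (showing $e_{\chi}\in\Lambda^{\mathcal{O}}(\mathcal{G})$ via defect zero, and showing $\gamma_{\chi}\in\zeta(\Lambda^{\mathcal{O}}(\mathcal{G})e_{\chi})$ by restricting $g^{-1}$ to the summands $e(\eta^{\gamma^{i}})L$ of a $\mathcal{G}$-stable lattice, then invoking maximality of $\Lambda^{\mathcal{O}}(\Gamma_{\chi})$) follow the paper essentially verbatim. Where you genuinely diverge is in establishing the isomorphism with $M_{\chi(1)\times\chi(1)}(\Lambda^{\mathcal{O}}(\Gamma_{\chi}))$. The paper refines $e_{\chi}$ all the way down to $\chi(1)=w_{\chi}\eta(1)$ orthogonal \emph{indecomposable} idempotents $f_{k,i}$, applies \cite[\S 46, Exercise 2]{MR892316} to obtain $\Lambda^{\mathcal{O}}(\mathcal{G})e_{\chi}\simeq M_{\chi(1)\times\chi(1)}(f_{1}\Lambda^{\mathcal{O}}(\mathcal{G})e_{\chi}f_{1})$, and then uses the Wedderburn structure over $\mathcal{Q}^{F}(\Gamma_{\chi})$ together with the inequality $m\geq\chi(1)$ to rule out a skewfield and identify the corner with the centre. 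You instead stop at the coarser decomposition along the $w_{\chi}$ conjugate idempotents $e(\eta^{\gamma^{i}})$, obtain $\Lambda^{\mathcal{O}}(\mathcal{G})e_{\chi}\simeq M_{w_{\chi}\times w_{\chi}}(e\Lambda^{\mathcal{O}}(\mathcal{G})e)$, compute the corner explicitly as $\bigoplus_{j}R[H]e(\eta)\gamma^{jw_{\chi}}$, exhibit a surjection from $M_{\eta(1)\times\eta(1)}(\Lambda^{\mathcal{O}}(\Gamma_{\chi}))$ by showing $\gamma^{w_{\chi}}e$ is generated by $\mathcal{O}[H]e$ and $\gamma_{\chi}e$, and conclude by a dimension count over the generic fibre plus torsion-freeness. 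This is a perfectly valid alternative and arguably more self-contained: it does not need to invoke the theory of skewfields over $\mathcal{Q}^{F}(\Gamma_{\chi})$, because the surjection from a split algebra of the right dimension automatically \emph{proves} the splitting. One small imprecision: your parenthetical assertion that ``the structure theory of \cite{MR2114937} identifies $e\mathcal{Q}^{F}(\mathcal{G})e$ with $M_{\eta(1)\times\eta(1)}(\mathcal{Q}^{F}(\Gamma_{\chi}))$'' overstates what Ritter--Weiss prove; \cite[Proposition 6]{MR2114937} yields only the dimension $\chi(1)^{2}$ of $\mathcal{Q}^{F}(\mathcal{G})e_{\chi}$ over its centre (hence $\eta(1)^{2}$ for the corner), not the splitting over $\mathcal{Q}^{F}(\Gamma_{\chi})$ --- but your surjection argument supplies the splitting independently, so the parenthetical is both unnecessary and harmlessly misattributed. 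You should also explicitly note that the coarse Peirce decomposition appeals to the same reference \cite[\S 46, Exercise 2]{MR892316} (conjugate orthogonal idempotents summing to the identity give a matrix ring over the corner), and that the containment $Re_{\chi}\subseteq\Lambda^{\mathcal{O}}(\Gamma_{\chi})$ used for surjectivity follows because $\gamma^{p^{n}}e_{\chi}$ is an integral central element of $\mathcal{Q}^{F}(\Gamma_{\chi})$ and hence lies in the unique maximal order.
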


\begin{proof}
Suppose that $v_{p}(\eta(1))=v_{p}(|H|)$. Then $v_{p}(\eta^{\gamma^{i}}(1)) =v_{p}(|H|)$ for each $i$. This has two consequences.
First, $e(\eta^{\gamma^{i}}) \in \mathcal{O}[H] \subseteq \Lambda^{\mathcal{O}}(\mathcal{G})$ for each $i$
and so the description of $e_{\chi}$ in \eqref{eq:idem-sum} shows that $e_{\chi} \in \Lambda^{\mathcal{O}}(\mathcal{G})$.
Second, since $\eta$ can be realised over $F$,
a standard result on ``$p$-blocks of defect zero'' (see \cite[Proposition 46 (b)]{MR0450380}, for example) shows that for each $i$ we have
an isomorphism of $\mathcal{O}$-orders
\begin{equation}\label{eq:matrix-order-key-point}
\mathcal{O}[H]e(\eta^{\gamma^{i}}) \simeq M_{\eta(1) \times \eta(1)}(\mathcal{O}).
\end{equation}

Let $M_{\chi}$ be an $\mathcal{O}$-lattice on $V_{\chi}$ that is stable under the action of $\mathcal{G}$.
In particular, $M_{\chi}$ is an $\mathcal{O}[H]$-module and since $e(\eta^{\gamma^{i}}) \in \mathcal{O}[H]$ for each $i$,
we have $M_{\chi} = \oplus_{i=0}^{w_{\chi}-1}e(\eta^{\gamma^{i}})M_{\chi}$.
Now recall the proof of Lemma \ref{lem:c-in-F[H]}.
The element $g^{-1}$ acts on $e(\eta^{\gamma^{i}})M_{\chi}$ for each $i$ and so we see that in fact
\[
c(\eta^{\gamma^{i}}) =
g^{-1} \mid_{e(\eta^{\gamma^{i}})M_{\chi}} \in \mathcal{O}[H]e(\eta^{\gamma^{i}}) \simeq \End_{\mathcal{O}}(e(\eta^{\gamma^{i}})M_{\chi})
\]
where the isomorphism follows from \eqref{eq:matrix-order-key-point}.
Hence $c \in \mathcal{O}[H]e_{\chi} \subseteq \Lambda^{\mathcal{O}}(\mathcal{G})e_{\chi}$ and
since $ge_{\chi} \in  \Lambda^{\mathcal{O}}(\mathcal{G})e_{\chi}$ we have
$\gamma_{\chi} = gc\in \zeta(\Lambda^{\mathcal{O}}(\mathcal{G})e_{\chi})$.
Thus $\Lambda^{\mathcal{O}}(\Gamma_{\chi}) \subseteq \zeta(\Lambda^{\mathcal{O}}(\mathcal{G})e_{\chi})$.
However, $\Lambda^{\mathcal{O}}(\Gamma_{\chi})$ is maximal by Lemma \ref{lem:unique-max-order-in-centre}
and therefore $\zeta(\Lambda^{\mathcal{O}}(\mathcal{G})e_{\chi}) = \Lambda^{\mathcal{O}}(\Gamma_{\chi})$.

Extending scalars in \eqref{eq:matrix-order-key-point} gives an isomorphism of $R$-orders $R[H]e(\eta) \simeq M_{\eta(1) \times \eta(1)}(R)$.
Let $e(\eta) = f_{1} + \cdots + f_{\eta(1)}$ be a decomposition of $e(\eta)$
into a sum of orthogonal indecomposable idempotents of $R[H]e(\eta)$.
Observe that $f_{k,i}:=\gamma^{-i}f_{k}\gamma^{i}$ is also an indecomposable idempotent
for every $0 \leq i < p^{n}$ and $1 \leq k \leq \eta(1)$.
Moreover, each $f_{k,i}$ belongs to $R[H]e(\eta^{\gamma^{i}})$ since
$\gamma^{-i}e(\eta)\gamma^{i}=e(\eta^{\gamma^{i}})$ and $H$ is normal in $\mathcal{G}$.
Thus $e(\eta^{\gamma^{i}}) = f_{1,i} + \cdots + f_{\eta(1),i}$ is a decomposition of $e(\eta^{\gamma^{i}})$
into a sum of orthogonal indecomposable idempotents of $R[H]e(\eta^{\gamma^{i}})$.
Note that $e(\eta) = e(\eta^{\gamma^{i}})$ if and only if  $w_{\chi}$ divides $i$.
Hence $R[H]e_{\chi} = \oplus_{i=0}^{w_{\chi}-1} R[H] e(\eta^{\gamma^{i}})$ is an $R$-suborder of
$\Lambda^{\mathcal{O}}(\mathcal{G})e_{\chi}$ and $e_{\chi} = \sum_{k=1}^{\eta(1)} \sum_{i=0}^{w_{\chi}-1} f_{k,i}$
is a decomposition of $e_{\chi}$ into orthogonal idempotents.
By considering the appropriate `elementary matrices' in $R[H]e(\eta) \simeq  M_{\eta(1) \times \eta(1)}(R)$
together with powers of $\gamma$, it is straightforward to see that there is a subset $S$ of
$(\Lambda^{\mathcal{O}}(\mathcal{G})e_{\chi})^{\times}$
such that each element of $I:=\{ f_{k,i} \}_{k=1,i=0}^{k=\eta(1), i=w_{\chi}-1}$ is a conjugate of $f_{1}=f_{1,0}$
by some element of $S$. Furthermore, $|I|=\eta(1)w_{\chi}=\chi(1)$.
Therefore by \cite[\S 46, Exercise 2]{MR892316} we have a ring isomorphism
\begin{equation}\label{eq:is-matrix-ring-chi1}
\Lambda^{\mathcal{O}}(\mathcal{G})e_{\chi} \simeq M_{\chi(1) \times \chi(1)}(f_{1}\Lambda^{\mathcal{O}}(\mathcal{G})e_{\chi}f_{1}).
\end{equation}

As shown in the proof of \cite[Proposition 6 (2)]{MR2114937},
the dimension of $\mathcal{Q}^{F}(\mathcal{G})e_{\chi}$ over
$\zeta(\mathcal{Q}^{F}(\mathcal{G})e_{\chi}) \simeq \mathcal{Q}^{F}(\Gamma_{\chi})$ is $\chi(1)^{2}$.
Since $e_{\chi}$ is a primitive central idempotent we have $\mathcal{Q}^{F}(\mathcal{G})e_{\chi} \simeq M_{m \times m}(D)$ for some $m$ 
and some skewfield $D$ with $\zeta(D)=\mathcal{Q}^{F}(\Gamma_{\chi})$.
Moreover, $\chi(1)=ms$ where $[D: \mathcal{Q}^{F}(\Gamma_{\chi})]=s^{2}$.
However, \eqref{eq:is-matrix-ring-chi1} shows that $m \geq \chi(1)$ and so in fact $\chi(1)=m$
and
\begin{equation}\label{eq:is-matrix-algebra-chi1}
\mathcal{Q}^{F}(\mathcal{G})e_{\chi} \simeq M_{\chi(1) \times \chi(1)}(\zeta(\mathcal{Q}^{F}(\mathcal{G})e_{\chi})) \simeq M_{\chi(1) \times \chi(1)}(\mathcal{Q}^{F}(\Gamma_{\chi})).
\end{equation}
Combining \eqref{eq:is-matrix-ring-chi1}, \eqref{eq:is-matrix-algebra-chi1} and the fact that
$\zeta(\Lambda^{\mathcal{O}}(\mathcal{G})e_{\chi}) = \Lambda^{\mathcal{O}}(\Gamma_{\chi})$
therefore shows that there are $R$-order isomorphisms
\[
\Lambda^{\mathcal{O}}(\mathcal{G})e_{\chi} \simeq M_{\chi(1) \times \chi(1)}(\zeta(\Lambda^{\mathcal{O}}(\mathcal{G})e_{\chi}))
\simeq M_{\chi(1) \times \chi(1)}(\Lambda^{\mathcal{O}}(\Gamma_{\chi})).
\]

Lemma \ref{lem:unique-max-order-in-centre} and \cite[Theorem 8.7]{MR1972204} show that
$M_{\chi(1) \times \chi(1)}(\Lambda^{\mathcal{O}}(\Gamma_{\chi}))$ is a maximal $R$-order.
The last claim follows from the ring isomorphism $\Lambda^{\mathcal{O}}(\Gamma_{\chi}) = \mathcal{O}[[\Gamma_{\chi}]] \simeq \mathcal{O}[[T]]$.
\end{proof}

\subsection{Central idempotents and Galois actions}\label{subset:idem-gal-actions}
Fix a character $\chi \in \Irr_{\Q_{p}^{c}}(\mathcal{G})$ and let $\eta$ be an irreducible constituent of
$\res^{\mathcal{G}}_{H} \chi$.
We define two fields
\[
K_{\chi} := \Q_{p}(\chi(h) \mid h \in H) \subseteq \Q_{p}(\eta) := \Q_{p}(\eta(h) \mid h \in H)
\]
and remark that the containment is not an equality in general.
Note that $K_{\chi} = K_{\chi \otimes \rho}$ whenever $\rho$ is of type $W$ (i.e.~$\res^{\mathcal{G}}_{H} \rho = 1$).
Since $H$ is normal in $\mathcal{G}$, we have $\Q_{p}(\eta^{g}) = \Q_{p}(\eta)$ for every $g \in \mathcal{G}$ and thus
$\Q_{p}(\eta)$ does not depend on the particular choice $\eta$ of irreducible constituent of $\res^{\mathcal{G}}_{H}\chi$.
We let $\sigma \in \Gal(\Q_{p}^{c}/\Q_{p})$ act on $\chi$ by ${}^{\sigma}\chi(g) = \sigma(\chi(g))$ for all $g \in \mathcal{G}$
and similarly on characters of $H$. Note that the actions on $\res^{\mathcal{G}}_{H}\chi$ and $\eta$ factor through
$\Gal(K_{\chi}/\Q_{p})$ and $\Gal(\Q_{p}(\eta)/\Q_{p})$, respectively.
Moreover, for $\sigma \in \Gal(\Q_{p}^{c}/\Q_{p})$ we have $\sigma(e_{\chi})=e_{({}^{\sigma}\chi)}$,
and $\sigma(e_{\chi})=e_{\chi}$ if and only if $\res^{\mathcal{G}}_{H}\chi = {}^{\sigma}(\res^{\mathcal{G}}_{H}\chi)$.
Hence the action of $\Gal(\Q_{p}^{c}/\Q_{p})$ on $e_{\chi}$ factors through $\Gal(K_{\chi}/\Q_{p})$ and we define
\[
\varepsilon_{\chi} := \sum_{\sigma \in \Gal(K_{\chi} / \Q_{p})} \sigma(e_{\chi}).
\]
We remark that $\varepsilon_{\chi}$ is a primitive central idempotent of $\mathcal{Q}(\mathcal{G})$.
Finally, we define an equivalence relation on $\Irr_{\Q_{p}^{c}}(\mathcal{G})$ as follows:
$\chi \sim \chi'$ if and only if there exists $\sigma \in \Gal(K_{\chi}/\Q_{p})$ such that $\sigma(e_{\chi})=e_{\chi'}$.
Note that the number of equivalence classes is finite and
that we have the decomposition $1 = \sum_{\chi/\sim} \varepsilon_{\chi}$ in $\mathcal{Q}(\mathcal{G})$.

\subsection{Maximal order $\varepsilon_{\chi}$-components of Iwasawa algebras}
We give criteria for `$\varepsilon_{\chi}$-components' of Iwasawa algebras of one-dimensional $p$-adic Lie groups to be maximal orders in the case that $F=\Q_{p}$.
Moreover, we give a somewhat explicit description of such components.

\begin{prop}\label{prop:max-part-of-hybrid-matrix-over-comm-ring}
Let $\chi \in \Irr_{\Q_{p}^{c}}(\mathcal{G})$ and let $\eta$ be an irreducible constituent of
$\res^{\mathcal{G}}_{H} \chi$.
If $v_{p}(\eta(1))=v_{p}(|H|)$ then $\varepsilon_{\chi} \in \Lambda(\mathcal{G})$ and $\varepsilon_{\chi}\Lambda(\mathcal{G})$
is a maximal $\Z_{p}[[\Gamma_{0}]]$-order.
Moreover, $\varepsilon_{\chi}\Lambda(\mathcal{G}) \simeq M_{\chi(1) \times \chi(1)}(S_{\chi})$
for some local integrally closed domain $S_{\chi}$.
\end{prop}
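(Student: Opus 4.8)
The plan is to deduce this statement from Proposition \ref{prop:chi-comp-max-order} by base change from a sufficiently large field $F$ down to $\Q_p$, using the Galois descent machinery set up in \S\ref{subset:idem-gal-actions}. First I would choose a finite extension $F/\Q_p$, with ring of integers $\mathcal{O}$, large enough that both $\chi$ and $\eta$ have realisations over $F$ and, moreover, large enough to contain $K_\chi$ (indeed one may take $F$ Galois over $\Q_p$ and containing $\Q_p(\eta)$). Set $\Gamma' = \Gal(F/\Q_p)$. Since $v_p(\eta(1)) = v_p(|H|)$ by hypothesis, Proposition \ref{prop:chi-comp-max-order} applies and gives $e_\chi \in \Lambda^{\mathcal{O}}(\mathcal{G})$, together with an isomorphism of $R = \mathcal{O}[[\Gamma_0]]$-orders $\Lambda^{\mathcal{O}}(\mathcal{G})e_\chi \simeq M_{\chi(1)\times\chi(1)}(\Lambda^{\mathcal{O}}(\Gamma_\chi))$, with $\Lambda^{\mathcal{O}}(\Gamma_\chi) \cong \mathcal{O}[[T]]$ a maximal $R$-order. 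The idempotent $\varepsilon_\chi = \sum_{\sigma \in \Gal(K_\chi/\Q_p)} \sigma(e_\chi)$ lies in $\mathcal{Q}(\mathcal{G})$ by \S\ref{subset:idem-gal-actions}; I must first check it lies in $\Lambda(\mathcal{G})$. For this, observe that $\Gamma'$ acts semilinearly on $\Lambda^{\mathcal{O}}(\mathcal{G}) = \mathcal{O} \otimes_{\Z_p} \Lambda(\mathcal{G})$ through the first factor, with $\Lambda(\mathcal{G}) = \Lambda^{\mathcal{O}}(\mathcal{G})^{\Gamma'}$; since each $\sigma(e_\chi) = e_{{}^\sigma\chi}$ lies in $\Lambda^{\mathcal{O}}(\mathcal{G})$ (again by Proposition \ref{prop:chi-comp-max-order} applied to ${}^\sigma\chi$, whose restriction to $H$ still has a constituent of degree with the same $p$-valuation) and $\varepsilon_\chi$ is visibly $\Gamma'$-fixed, we get $\varepsilon_\chi \in \Lambda^{\mathcal{O}}(\mathcal{G})^{\Gamma'} = \Lambda(\mathcal{G})$.

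Next I would identify $\varepsilon_\chi \Lambda^{\mathcal{O}}(\mathcal{G})$ as a product of blocks. Write the orbit of $e_\chi$ under $\Gal(K_\chi/\Q_p)$ as $e_\chi = e_{\chi_1}, \dots, e_{\chi_r}$ (these are the distinct $\sigma(e_\chi)$), so that $\varepsilon_\chi = \sum_{j=1}^r e_{\chi_j}$ is a sum of orthogonal primitive central idempotents of $\mathcal{Q}^c(\mathcal{G})$, and consequently $\varepsilon_\chi \Lambda^{\mathcal{O}}(\mathcal{G}) = \bigoplus_{j=1}^r \Lambda^{\mathcal{O}}(\mathcal{G})e_{\chi_j}$, each factor a matrix ring $M_{\chi(1)\times\chi(1)}(\Lambda^{\mathcal{O}}(\Gamma_{\chi_j}))$ over a maximal $R$-order by Proposition \ref{prop:chi-comp-max-order} (note $\chi_j(1) = \chi(1)$ since $\chi_j$ is a Galois conjugate of $\chi$). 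Hence $\varepsilon_\chi \Lambda^{\mathcal{O}}(\mathcal{G})$ is a maximal $R$-order. Now $\varepsilon_\chi \Lambda^{\mathcal{O}}(\mathcal{G}) = \mathcal{O} \otimes_{\Z_p} \varepsilon_\chi \Lambda(\mathcal{G})$, so $\varepsilon_\chi \Lambda(\mathcal{G})$ is a $\Z_p[[\Gamma_0]]$-order whose base change to $R = \mathcal{O}[[\Gamma_0]]$ is maximal; since $R$ is a faithfully flat (indeed free, finite, unramified) extension of $\Z_p[[\Gamma_0]]$, maximality descends, so $\varepsilon_\chi \Lambda(\mathcal{G})$ is a maximal $\Z_p[[\Gamma_0]]$-order. (Alternatively one invokes that an order is maximal iff it is maximal at every height-one prime, which is insensitive to the unramified base change $\Z_p[[\Gamma_0]] \to \mathcal{O}[[\Gamma_0]]$.)

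It remains to produce the matrix description $\varepsilon_\chi \Lambda(\mathcal{G}) \simeq M_{\chi(1)\times\chi(1)}(S_\chi)$ with $S_\chi$ a local integrally closed domain. Here $S_\chi$ should be $\zeta(\varepsilon_\chi \Lambda(\mathcal{G}))$, or equivalently an appropriate "corner" ring $f\,\varepsilon_\chi\Lambda(\mathcal{G})\,f$ for a suitable idempotent $f$. The cleanest route: $\mathcal{Q}(\mathcal{G})\varepsilon_\chi$ is a simple algebra (since $\varepsilon_\chi$ is primitive central in $\mathcal{Q}(\mathcal{G})$), and after extension to $F$ it becomes $\prod_j M_{\chi(1)\times\chi(1)}(\mathcal{Q}^F(\Gamma_{\chi_j}))$, which has index $1$; by a Brauer-group/index argument (the index cannot grow under the unramified extension $\mathcal{Q}(\Gamma_0) \to \mathcal{Q}^F(\Gamma_0)$ in the relevant way, or directly because the Schur index is already trivial after base change and $\chi(1) = m$ forces the skewfield part to be the centre) one gets $\mathcal{Q}(\mathcal{G})\varepsilon_\chi \simeq M_{\chi(1)\times\chi(1)}(D_\chi)$ with $D_\chi = \zeta(\mathcal{Q}(\mathcal{G})\varepsilon_\chi)$ a field. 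Then, exactly as in the proof of Proposition \ref{prop:chi-comp-max-order} (decomposing $\varepsilon_\chi$ into $\chi(1)$ mutually conjugate orthogonal idempotents via elementary matrices and powers of $\gamma$, then applying \cite[\S46, Exercise 2]{MR892316}), one obtains $\varepsilon_\chi \Lambda(\mathcal{G}) \simeq M_{\chi(1)\times\chi(1)}(S_\chi)$ with $S_\chi = f_1 \varepsilon_\chi \Lambda(\mathcal{G}) f_1$; comparing with the already-established maximality and with $\zeta(\mathcal{Q}(\mathcal{G})\varepsilon_\chi) = D_\chi$ shows $S_\chi$ is a maximal $\Z_p[[\Gamma_0]]$-order in the field $D_\chi$, hence an integrally closed domain; it is local because it is module-finite over the complete local ring $\Z_p[[\Gamma_0]]$ and is a domain (a commutative order over a complete local ring that is a domain is local). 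The main obstacle is the Schur-index descent step — showing that the matrix size stays $\chi(1)$ after coming down from $F$ to $\Q_p$, i.e. that $\mathcal{Q}(\mathcal{G})\varepsilon_\chi$ genuinely splits as a full matrix ring over its centre. I expect this is handled by the same dimension count used in Proposition \ref{prop:chi-comp-max-order}: the explicit $\chi(1)$ mutually conjugate orthogonal idempotents summing to $\varepsilon_\chi$ already force the matrix size to be at least $\chi(1)$, while the generic dimension of $\mathcal{Q}(\mathcal{G})\varepsilon_\chi$ over its centre is at most $\chi(1)^2$ (it equals $r^{-1}$ times the $F$-dimension count, or more simply $\varepsilon_\chi$ has the same local structure as each $e_{\chi_j}$ after base change), pinning the skewfield part down to the centre.
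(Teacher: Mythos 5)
Your first two steps — checking $\varepsilon_{\chi} \in \Lambda(\mathcal{G})$ and deducing maximality by base change to $\mathcal{O}[[\Gamma_{0}]]$ and then descending — agree with the paper in substance, though the paper phrases the first step simply as $\varepsilon_{\chi} \in \mathcal{Q}(\mathcal{G}) \cap \Lambda^{\mathcal{O}}(\mathcal{G}) = \Lambda(\mathcal{G})$ rather than invoking a semilinear Galois action explicitly. You are also right that the key difficulty is the last step, but your two proposed ways to close it do not work as stated, and this is exactly where the paper does something genuinely different.

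The ``elementary matrices / conjugate idempotents'' route of Proposition~\ref{prop:chi-comp-max-order} is not available directly over $\Z_{p}$: that construction started from the explicit block isomorphism $\mathcal{O}[H]e(\eta) \simeq M_{\eta(1)\times\eta(1)}(\mathcal{O})$, which requires the coefficients to be large enough to realise $\eta$. Over $\Z_{p}$ there is no such explicit block decomposition of $\Z_{p}[H]\varepsilon_{\chi}$, so one cannot simply exhibit $\chi(1)$ mutually conjugate orthogonal idempotents of $\varepsilon_{\chi}\Lambda(\mathcal{G})$. Your second suggestion — an index argument at the level of the generic fibre $\mathcal{Q}(\mathcal{G})\varepsilon_{\chi}$ over its centre — also has a gap: a central simple algebra whose scalar extension is split need not itself be split, and the index always \emph{divides} under scalar extension, so knowing the index is $1$ over $\mathcal{Q}^{F}(\Gamma_{\chi})$ tells you nothing in the ``descent'' direction. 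Moreover the centre here is a field of fractions of a power series ring, whose Brauer group is not trivial in general, so ``matrix size stays $\chi(1)$'' does not follow from a generic-fibre Brauer computation.

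The paper sidesteps all of this by passing to the \emph{special} fibre. It sets $\Lambda = \varepsilon_{\chi}\Lambda(\mathcal{G})$, $S = \zeta(\Lambda)$, shows $S$ is a complete local integrally closed domain with finite residue field $k$, and proves $\Lambda$ is an Azumaya $S$-algebra (by a separability check over $k$, bootstrapping from the explicit $\mathcal{O}$-level matrix description). Then the key input is that $\Br(S) \hookrightarrow \Br(k)$ for $S$ complete local and $\Br(k) = 0$ by Wedderburn's theorem, which forces $\Lambda$ to be a full matrix algebra over $S$; the size is pinned down to $\chi(1)$ by the rank computation after base change. This reduction to the Brauer group of the (finite) residue field, rather than of the generic fibre, is the idea your proposal is missing, and it is essential: without it, the Schur index over $\zeta(\mathcal{Q}(\mathcal{G})\varepsilon_{\chi})$ could a priori be nontrivial.
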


\begin{remark}
The condition that $v_{p}(\eta(1))=v_{p}(|H|)$ is independent of choice of irreducible consistent $\eta$ of $\res^{\mathcal{G}}_{H} \chi$
and thus only depends on $\chi$. Moreover, if the condition holds for $\chi \in \Irr_{\Q_{p}^{c}}(\mathcal{G})$, then
it also holds for ${}^{\sigma}\chi$ for all $\sigma \in \Gal(\Q_{p}^{c}/\Q_{p})$ (recall \S \ref{subset:idem-gal-actions}).
\end{remark}

\begin{proof}[Proof of Proposition \ref{prop:max-part-of-hybrid-matrix-over-comm-ring}]
Assume the notation and hypotheses of Proposition \ref{prop:chi-comp-max-order}.
In particular, Proposition \ref{prop:chi-comp-max-order} shows that $e_{\chi} \in \Lambda^{\mathcal{O}}(\mathcal{G})$.
Hence $\varepsilon_{\chi} \in \mathcal{Q}(\mathcal{G}) \cap \Lambda^{\mathcal{O}}(\mathcal{G}) = \Lambda(\mathcal{G})$.
Now
\[
 \bigoplus_{\sigma \in \Gal(K_{\chi}/\Q_{p})} \Lambda^{\mathcal{O}}(\mathcal{G})\sigma(e_{\chi})
= \Lambda^{\mathcal{O}}(\mathcal{G}) \varepsilon_{\chi}
= \mathcal{O} \otimes_{\Z_{p}} \Lambda(\mathcal{G})  \varepsilon_{\chi}
=\mathcal{O}[[\Gamma_{0}]] \otimes_{\Z_{p}[[\Gamma_{0}]]} \Lambda(\mathcal{G}) \varepsilon_{\chi},
\]
which is a maximal $\mathcal{O}[[\Gamma_{0}]]$-order by Proposition \ref{prop:chi-comp-max-order}.
Hence we have that $\Lambda(\mathcal{G}) \varepsilon_{\chi}$ is a maximal $\Z_{p}[[\Gamma_{0}]]$-order.

Set $\Lambda = \Lambda(\mathcal{G})\varepsilon_{\chi}$ and $S=\zeta(\Lambda)$.
Note that $S$ is contained in $\zeta(\mathcal{Q}(\mathcal{G})\varepsilon_{\chi})$, which is a field
since $\varepsilon_{\chi}$ is a primitive central idempotent of $\mathcal{Q}(\mathcal{G})$.
Thus $S$ is an integral domain.
Moreover, $S$ is semiperfect by \cite[Example (23.3)]{MR1838439} and thus is a finite direct product of local rings by
\cite[Theorem (23.11)]{MR1838439}; therefore $S$ is a local integral domain.
In fact, $S$ is a complete local integral domain by \cite[Proposition 6.5 (ii)]{MR632548}.
Furthermore, $S$ must be integrally closed since $\Lambda(\mathcal{G})\varepsilon_{\chi}$ is maximal.

Now set $\Lambda' = \Lambda^{\mathcal{O}}(\mathcal{G})\varepsilon_{\chi} = \mathcal{O} \otimes_{\Z_{p}} \Lambda$.
It is clear that $\mathcal{O} \otimes_{\Z_{p}} S \subseteq S':=\zeta(\Lambda')$.
A standard argument gives the reverse containment; here we give a slightly modified version of the proof of
\cite[Theorem 7.6]{MR1972204}.
Let $b_{1}, \ldots, b_{m}$ be a $\Z_{p}$-basis of $\mathcal{O}$. Let $x \in S'$.
Then we can write $x= \sum_{i} b_{i} \otimes \lambda_{i}$ for some $\lambda_{i} \in \Lambda$
and in particular, $x$ commutes with $1 \otimes \lambda$ for all $\lambda \in \Lambda$.
Hence $\sum_{i} b_{i} \otimes (\lambda \lambda_{i} - \lambda_{i} \lambda) = 0$
and so $\lambda \lambda_{i} = \lambda_{i} \lambda$ for all $\lambda \in \Lambda$,
that is, $\lambda_{i} \in \zeta(\Lambda)=S$.
Therefore $S' = \mathcal{O} \otimes_{\Z_{p}} S$. This gives
\begin{equation}\label{eqn:extend-by-central-scalars}
\Lambda'  = \mathcal{O} \otimes_{\Z_{p}} \Lambda \simeq (\mathcal{O} \otimes_{\Z_{p}} S) \otimes_{S} \Lambda \simeq S' \otimes_{S} \Lambda.
\end{equation}

Let $S'' :=e_{\chi}S'=\zeta(\Lambda' e_{\chi})$.
By Proposition \ref{prop:chi-comp-max-order} we have $S'' \simeq \mathcal{O}[[\Gamma_{\chi}]]$,
which is a local integrally closed domain.
Let $k$ and $k''$ be the residue fields of $S$ and $S''$, respectively.
Then $k''/k$ is a finite extension of finite fields of characteristic $p$.
By Proposition \ref{prop:chi-comp-max-order} and \eqref{eqn:extend-by-central-scalars} we have
\begin{equation}\label{eqn:defn-of-Gamma}
\Lambda'' := S'' \otimes_{S} \Lambda  \simeq e_{\chi}(S' \otimes_{S} \Lambda)
\simeq e_{\chi}\Lambda' \simeq M_{\chi(1) \times \chi(1)}(S'').
\end{equation}
Now set $\overline{\Lambda} := k \otimes_{S} \Lambda$ and observe that
\[
k'' \otimes_{k} \overline{\Lambda} = k'' \otimes_{k} (k \otimes_{S} \Lambda) \simeq k'' \otimes_{S} \Lambda
\simeq k'' \otimes_{S''} (S'' \otimes_{S} \Lambda) \simeq k'' \otimes_{S''} \Lambda'' \simeq M_{\chi(1) \times \chi(1)}(k'').
\]
Therefore \cite[Theorem 7.18]{MR1972204} shows that $\overline{\Lambda}$ is a separable $k$-algebra.
Now \cite[Theorem 4.7]{MR0121392} shows that $\Lambda$ is separable over $S$.
Hence $\Lambda$ is an Azumaya algebra (i.e.\ separable over its centre) and so defines a class $[\Lambda]$
in the Brauer group $\Br(S)$.
Since $S$ is a complete local ring, \cite[Corollary 6.2]{MR0121392} shows that the natural homomorphism
$\Br(S) \rightarrow \Br(k)$ given by $[X] \mapsto [k \otimes_{S} X]$ is in fact a monomorphism.
However, $\Br(k)$ is trivial by Wedderburn's theorem that every finite division ring is a field.
Therefore $[\Lambda]$ is the trivial element of $\Br(S)$ and so by \cite[Proposition 5.3]{MR0121392} $\Lambda$
is isomorphic to an $S$-algebra of the form $\Hom_{S}(P,P)$ where $P$ is a finitely generated projective faithful $S$-module.
Since $S$ is a local ring, $P$ must be free and so $\Lambda$ is isomorphic to a matrix ring over its centre $S$.
Now \eqref{eqn:defn-of-Gamma} shows that in fact $\Lambda \simeq M_{\chi(1) \times \chi(1)}(S)$.
\end{proof}

The following proposition is an adaptation of \cite[Lemma 2.1]{MR3461042}.

\begin{prop}\label{prop:idempotent-implies-defect-zero}
Let $\chi \in \Irr_{\Q_{p}^{c}}(\mathcal{G})$.
If $\varepsilon_{\chi} \in \Lambda(\mathcal{G})$ then $v_{p}(\eta(1))=v_{p}(|H|)$ for every
irreducible constituent $\eta$ of $\res^{\mathcal{G}}_{H} \chi$.
\end{prop}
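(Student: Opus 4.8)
The plan is to prove the contrapositive. Suppose that some irreducible constituent $\eta$ of $\res^{\mathcal{G}}_{H}\chi$ satisfies $v_{p}(\eta(1)) < v_{p}(|H|)$; I will show that $\varepsilon_{\chi} \notin \Lambda(\mathcal{G})$. Since $\varepsilon_{\chi} = \sum_{\sigma \in \Gal(K_{\chi}/\Q_{p})}\sigma(e_{\chi})$ and distinct $\sigma$ produce orthogonal primitive central idempotents $\sigma(e_{\chi}) = e_{({}^{\sigma}\chi)}$ of $\mathcal{Q}^{c}(\mathcal{G})$, it suffices to work one $e_{\chi}$-component at a time: if $\varepsilon_{\chi}$ were in $\Lambda(\mathcal{G})$, then after extending scalars to a sufficiently large $\mathcal{O}$ the element $\varepsilon_{\chi}$ would lie in $\Lambda^{\mathcal{O}}(\mathcal{G})$, and multiplying by the idempotent $e_{\chi} \in \mathcal{Q}^{c}(\mathcal{G})$ would force $e_{\chi} = \varepsilon_{\chi}e_{\chi} \in \Lambda^{\mathcal{O}}(\mathcal{G})$ as well (using that $e_{\chi}\sigma(e_{\chi}) = 0$ for $\sigma \neq 1$ in $\Gal(K_\chi/\Q_p)$). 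So the crux is to show: if $v_{p}(\eta(1)) < v_{p}(|H|)$, then $e_{\chi} \notin \Lambda^{\mathcal{O}}(\mathcal{G})$ for any finite extension $\mathcal{O}/\Z_{p}$ over which $\chi$ and $\eta$ are realisable.

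To see this, recall from \eqref{eq:idem-sum} that $e_{\chi} = \sum_{i=0}^{w_{\chi}-1} e(\eta^{\gamma^{i}})$ with $e(\eta) = \frac{\eta(1)}{|H|}\sum_{h \in H}\eta(h^{-1})h$, and the summands $e(\eta^{\gamma^i})$ lie in distinct $R[H]\gamma^i$-graded pieces only up to the $H$-part; more precisely, all $e(\eta^{\gamma^i})$ lie in $\Q_{p}^{c}[H] \subseteq \Q_{p}^{c}[[\mathcal{G}]]$. Using the decomposition $\Lambda^{\mathcal{O}}(\mathcal{G}) = \bigoplus_{i=0}^{p^{n}-1} R[H]\gamma^{i}$ from \S\ref{subsec:Iwasawa-algebras} and projecting onto the degree-zero component $R[H] = \mathcal{O}[[\Gamma_0]][H]$, membership $e_{\chi} \in \Lambda^{\mathcal{O}}(\mathcal{G})$ would force $e(\eta) \in \mathcal{O}[H]$ (after further projecting the $\mathcal{O}[[\Gamma_{0}]]$-coefficients onto their constant terms, since $e(\eta)$ has coefficients in $\mathcal{O}_{K_\chi}$ independent of $T$ — here I should be careful, but the coefficients of $e(\eta)$ genuinely lie in $\Q_p(\eta) \subseteq \Q_p^c$ and are not power series). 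Thus $e_\chi \in \Lambda^{\mathcal O}(\mathcal G)$ implies $e(\eta) \in \mathcal{O}[H]$, i.e.\ $\frac{\eta(1)}{|H|}\eta(h^{-1}) \in \mathcal{O}$ for all $h \in H$. Taking $h = 1$ gives $\frac{\eta(1)^{2}}{|H|} \in \mathcal{O}$, whence $v_{p}(\eta(1)^{2}) \geq v_{p}(|H|)$; but the standard integrality fact that $\eta(1) \mid |H|$ gives $v_p(\eta(1)) \le v_p(|H|)$, and combined with $2v_p(\eta(1)) \ge v_p(|H|)$ this still permits $v_p(\eta(1)) < v_p(|H|)$, so I need a sharper input. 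The right tool is the classical result (Serre, \cite[Proposition 46]{MR0450380}, or the theory of $p$-blocks of defect zero) that $e(\eta) \in \mathcal{O}_{\Q_p(\eta)}[H]$ if and only if $\eta$ lies in a block of defect zero, i.e.\ $v_{p}(\eta(1)) = v_{p}(|H|)$. Invoking this, $e(\eta) \in \mathcal{O}[H]$ forces $v_{p}(\eta(1)) = v_{p}(|H|)$, contradicting our assumption.

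Assembling: given $\chi$ with $v_{p}(\eta(1)) < v_{p}(|H|)$, we have $e(\eta) \notin \mathcal{O}_{\Q_p(\eta)}[H]$, hence $e_{\chi} \notin \Lambda^{\mathcal{O}}(\mathcal{G})$ (by the graded-projection argument), hence $\varepsilon_{\chi} \notin \Lambda^{\mathcal{O}}(\mathcal{G})$ (by the orthogonality argument), hence $\varepsilon_{\chi} \notin \Lambda(\mathcal{G}) \subseteq \Lambda^{\mathcal{O}}(\mathcal{G})$. The main obstacle I anticipate is the bookkeeping in the graded-projection step: I must make sure that the projection $\Lambda^{\mathcal{O}}(\mathcal{G}) \to R[H]$ onto the degree-zero graded piece, followed by the $\mathcal{O}[[\Gamma_0]] \to \mathcal{O}$ augmentation-type map (or rather extraction of the $T$-constant coefficient), really does send $e_{\chi}$ to $e(\eta)$ (or to a scalar multiple that does not destroy the valuation argument) — this requires knowing that among $e(\eta^{\gamma^0}), \ldots, e(\eta^{\gamma^{w_\chi - 1}})$ exactly one, namely $e(\eta) = e(\eta^{\gamma^0})$, survives to contribute, which follows from $e(\eta^{\gamma^i}) = \gamma^{-i}e(\eta)\gamma^{i}$ being supported in $H$ together with the normality of $H$ in $\mathcal G$ (so each $e(\eta^{\gamma^i}) \in \Q_p^c[H]$ sits in the degree-zero piece, and they are orthogonal). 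Everything else reduces to the cited defect-zero criterion and the elementary orthogonality of the $\sigma(e_\chi)$.
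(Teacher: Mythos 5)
There are two genuine gaps in the reduction you propose, and the paper's proof works quite differently to avoid them.

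\textbf{Gap 1: the passage from $\varepsilon_\chi \in \Lambda(\mathcal{G})$ to $e_\chi \in \Lambda^{\mathcal{O}}(\mathcal{G})$.} Your argument is circular: writing $e_\chi = \varepsilon_\chi e_\chi$ and invoking $\varepsilon_\chi \in \Lambda^{\mathcal{O}}(\mathcal{G})$ does not put $\varepsilon_\chi e_\chi$ inside $\Lambda^{\mathcal{O}}(\mathcal{G})$ unless you already know $e_\chi$ lies there, which is what you are trying to show. The $\sigma(e_\chi)$ being orthogonal idempotents of $\mathcal{Q}^c(\mathcal{G})$ whose sum is integral does not, by any general principle, force each summand to be integral over $\mathcal{O}$. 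In fact, proving that implication in this setting would already be essentially as hard as the proposition itself.

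\textbf{Gap 2: extracting $e(\eta)$ from $e_\chi$ via graded projection.} You write that ``among $e(\eta^{\gamma^0}), \ldots, e(\eta^{\gamma^{w_\chi-1}})$ exactly one, namely $e(\eta)$, survives'' the projection $\Lambda^{\mathcal{O}}(\mathcal{G}) = \bigoplus_i R[H]\gamma^i \to R[H]$ followed by extraction of $T$-constant terms. This is false. Since $e(\eta^{\gamma^i}) = \gamma^{-i} e(\eta)\gamma^i$ is a $\Q_p^c$-linear combination of elements of $H$ only (because conjugation by $\gamma^i$ preserves $H$), \emph{every} summand $e(\eta^{\gamma^i})$ already sits inside the degree-zero piece $\Q_p^c[H]$, and the projection simply returns the whole sum $e_\chi$. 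There is no graded degree that distinguishes $e(\eta)$ from its $\gamma$-conjugates, so you cannot isolate $e(\eta)$ this way. At best you would learn $e_\chi \in \mathcal{O}[H]$, not $e(\eta) \in \mathcal{O}[H]$, and these are not the same thing when $w_\chi > 1$.

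\textbf{The cited criterion.} The direction of the defect-zero criterion you need --- that $e(\eta) \in \mathcal{O}_{\Q_p(\eta)}[H]$ forces $v_p(\eta(1)) = v_p(|H|)$ --- is true, but Serre's Proposition 46 (which the paper cites elsewhere) gives only the converse direction. The direction you need is itself usually proved precisely by showing that an integral central idempotent vanishes on $p$-singular group elements and then applying Frobenius reciprocity at a Sylow $p$-subgroup, which is exactly what the paper's proof does directly on $\varepsilon_\chi$. So even if Gaps 1 and 2 were closed, the ``black box'' you invoke is not shorter.

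\textbf{What the paper does.} Rather than trying to deduce integrality of $e(\eta)$, the paper works directly with the virtual character
\[
\beta := \sum_{\sigma \in \Gal(K_{\chi} / \Q_{p})} \sigma \Bigl( \sum_{i=0}^{w_{\chi}-1} \eta^{\gamma^{i}} \Bigr),
\]
so that $\varepsilon_\chi = \frac{\eta(1)}{|H|}\sum_{h} \beta(h^{-1})h$. Since $\varepsilon_\chi \in \Lambda(\mathcal{G})$ and is supported on $H$, its coefficients lie in $\Z_p$, and a result of Robinson then shows these vanish on $p$-singular elements, hence $\beta$ vanishes on $p$-singular elements. Restricting $\beta$ to a Sylow $p$-subgroup $P$ of $H$ and computing $\langle \res^H_P \beta, 1_P\rangle$ in two ways yields $\eta(1) = |P| \cdot \langle \res^H_P \eta, 1_P\rangle$, giving $v_p(\eta(1)) = v_p(|H|)$ directly. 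This avoids both the need to make $e_\chi$ (let alone $e(\eta)$) integral and the incorrect graded-projection step; the $\gamma$-conjugates are handled cleanly through the identity $\langle \res^H_P \eta^{\gamma^i}, 1_P\rangle = \langle \res^H_P \eta, 1_P\rangle$, which uses that $\gamma^i P \gamma^{-i}$ is again a Sylow $p$-subgroup of $H$ and hence $H$-conjugate to $P$.
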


\begin{proof}
Fix an irreducible constituent $\eta$ of $\res^{\mathcal{G}}_{H} \chi$ and recall the notation of \S \ref{subset:idem-gal-actions}.
We may write $\varepsilon_{\chi} = \frac{\eta(1)}{|H|} \sum_{h \in H} \beta(h^{-1}) h$, where
\[
\beta := \sum_{\sigma \in \Gal(K_{\chi} / \Q_{p})} \sigma \left( \sum_{i=0}^{w_{\chi}-1} \eta^{\gamma^{i}} \right)
\]
is a $\Q_{p}$-valued character of $H$.
Recall that an element $h \in H$ is said to be $p$-singular if its order is divisible by $p$.
Write $\varepsilon_{\chi} = \sum_{h \in H} a_{h} h$ with $a_{h} \in \Z_{p}$ for $h \in H$.
Then \cite[Proposition 5]{MR1261587} shows that $a_{h}=0$ for every $p$-singular $h \in H$
(alternatively, one can use \cite[Proposition 3]{MR1261587} and that $\varepsilon_{\chi}$ is central).
Hence the character $\beta$ vanishes on $p$-singular elements.
Let $P$ be a Sylow $p$-subgroup of $H$.
Then $\beta$ vanishes on $P- \{1\}$, so the multiplicity of the trivial character of $P$ in the restriction $\res^{H}_{P} \beta$ is
\[
\langle \res^{H}_{P} \beta , 1_{P} \rangle = \beta(1) |P|^{-1} = [K_{\chi} : \Q_{p}] w_{\chi} \eta(1)|P|^{-1}.
\]
Now fix $0 \leq i < w_{\chi}$. Then $P' := \gamma^{i} P \gamma^{-i}$ is also a Sylow $p$-subgroup of $H$. Hence we may write $P' = hPh^{-1}$ for some $h \in H$.
We have
$\langle \res^{H}_{P} \eta^{\gamma^{i}} , 1_{P} \rangle = \langle \res^{H}_{P} \eta^{h} , 1_{P} \rangle = \langle \res^{H}_{P} \eta , 1_{P} \rangle$,
and thus
\[
\langle \res^{H}_{P} \beta, 1_{P} \rangle
= \sum_{\sigma \in \Gal(K_{\chi} / \Q_{p})} \sum_{i=0}^{w_{\chi}-1} \langle ^{\hat{\sigma}}(\res^{H}_{P}\eta^{\gamma^{i}}), 1_{P} \rangle
= [K_{\chi} : \Q_{p}] w_{\chi}  \langle \res^{H}_{P} \eta , 1_{P} \rangle,
\]
where $\hat \sigma \in \Gal(\Q_{p}(\eta) / \Q_{p})$ denotes any lift of $\sigma$.
Therefore $\eta(1) = |P| \langle \res^{H}_{P} \eta , 1_{P} \rangle$, and so $v_{p}(\eta(1))=v_{p}(|H|)$.
\end{proof}

\begin{prop}\label{prop:varep-comp-TFAE}
Let $\chi \in \Irr_{\Q_{p}^{c}}(\mathcal{G})$ and let $\eta$ be an irreducible constituent of
$\res^{\mathcal{G}}_{H} \chi$. Then the following are equivalent:
\begin{enumerate}
\item $v_{p}(\eta(1))=v_{p}(|H|)$,
\item $\varepsilon_{\chi} \in \Lambda(\mathcal{G})$,
\item $\varepsilon_{\chi} \in \Lambda(\mathcal{G})$ and $\varepsilon_{\chi}\Lambda(\mathcal{G})$
is a maximal $\Z_{p}[[\Gamma_{0}]]$-order.
\end{enumerate}
Moreover, if these equivalent conditions hold then
$\varepsilon_{\chi}\Lambda(\mathcal{G}) \simeq M_{\chi(1) \times \chi(1)}(S_{\chi})$
for some local integrally closed domain $S_{\chi}$.
\end{prop}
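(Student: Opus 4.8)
The plan is to deduce Proposition \ref{prop:varep-comp-TFAE} essentially by stitching together the three preceding results, which between them already contain all the substantive work. The equivalence $(i) \Leftrightarrow (iii)$ and the final claim about the matrix decomposition are literally the content of Proposition \ref{prop:max-part-of-hybrid-matrix-over-comm-ring} (the forward direction $(i) \Rightarrow (iii)$) together with Proposition \ref{prop:idempotent-implies-defect-zero} (which gives $(ii) \Rightarrow (i)$, and a fortiori $(iii) \Rightarrow (i)$). So after recalling these, the only thing left is to close the cycle of implications.

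Concretely, I would argue: $(i) \Rightarrow (iii)$ is Proposition \ref{prop:max-part-of-hybrid-matrix-over-comm-ring}, which also supplies the asserted isomorphism $\varepsilon_{\chi}\Lambda(\mathcal{G}) \simeq M_{\chi(1) \times \chi(1)}(S_{\chi})$ with $S_{\chi}$ a local integrally closed domain. The implication $(iii) \Rightarrow (ii)$ is trivial (it is a weakening of the statement). Finally $(ii) \Rightarrow (i)$ is exactly Proposition \ref{prop:idempotent-implies-defect-zero}. This gives $(i) \Rightarrow (iii) \Rightarrow (ii) \Rightarrow (i)$, hence all three are equivalent, and the ``moreover'' clause follows since whenever the conditions hold we are in the situation of Proposition \ref{prop:max-part-of-hybrid-matrix-over-comm-ring}. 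I would also note in passing, citing the remark following Proposition \ref{prop:max-part-of-hybrid-matrix-over-comm-ring}, that condition $(i)$ is independent of the choice of irreducible constituent $\eta$ of $\res^{\mathcal{G}}_{H}\chi$, so the statement is well-posed.

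There is no real obstacle here: the proposition is a packaging result, and the work has been front-loaded into Propositions \ref{prop:max-part-of-hybrid-matrix-over-comm-ring} and \ref{prop:idempotent-implies-defect-zero}. If anything, the only point requiring a moment's care is making sure the hypotheses of Proposition \ref{prop:chi-comp-max-order} (a sufficiently large coefficient field $F$ over which both $\chi$ and $\eta$ are realisable) are invoked correctly inside the proof of Proposition \ref{prop:max-part-of-hybrid-matrix-over-comm-ring} rather than being needed afresh here — but since we are only citing that proposition as a black box, nothing new is required. I would therefore keep the proof to a few lines, simply laying out the cycle of implications and pointing to the final isomorphism.

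\begin{proof}
If $v_{p}(\eta(1))=v_{p}(|H|)$ then Proposition \ref{prop:max-part-of-hybrid-matrix-over-comm-ring} shows that $\varepsilon_{\chi} \in \Lambda(\mathcal{G})$, that $\varepsilon_{\chi}\Lambda(\mathcal{G})$ is a maximal $\Z_{p}[[\Gamma_{0}]]$-order, and that $\varepsilon_{\chi}\Lambda(\mathcal{G}) \simeq M_{\chi(1) \times \chi(1)}(S_{\chi})$ for some local integrally closed domain $S_{\chi}$. Thus $(i)$ implies $(iii)$ and also the final assertion. Trivially $(iii)$ implies $(ii)$. Finally, if $\varepsilon_{\chi} \in \Lambda(\mathcal{G})$ then Proposition \ref{prop:idempotent-implies-defect-zero} gives $v_{p}(\eta(1))=v_{p}(|H|)$, so $(ii)$ implies $(i)$. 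Hence $(i)$, $(ii)$ and $(iii)$ are equivalent, and when they hold the description of $\varepsilon_{\chi}\Lambda(\mathcal{G})$ is as stated. (Recall from the remark following Proposition \ref{prop:max-part-of-hybrid-matrix-over-comm-ring} that condition $(i)$ does not depend on the choice of irreducible constituent $\eta$ of $\res^{\mathcal{G}}_{H}\chi$.)
\end{proof}
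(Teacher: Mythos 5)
Your proposal is correct and matches the paper's proof exactly: the paper simply states that the proposition is ``the combination of Propositions \ref{prop:max-part-of-hybrid-matrix-over-comm-ring} and \ref{prop:idempotent-implies-defect-zero}'', which is precisely the cycle of implications you spell out. Nothing is missing and no alternative route is taken.
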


\begin{proof}
This is the combination of Propositions \ref{prop:max-part-of-hybrid-matrix-over-comm-ring} and \ref{prop:idempotent-implies-defect-zero}.
\end{proof}

\subsection{Hybrid Iwasawa algebras}
Let $p$ be an odd prime and let $\mathcal{G} = H \rtimes \Gamma$ be a one-dimensional $p$-adic Lie group. 
As in \S \ref{subsec:Iwasawa-algebras}, we choose an open subgroup $\Gamma_{0} \leq \Gamma$ that is central in $\mathcal{G}$.
If $N$ is a finite normal subgroup of $\mathcal{G}$ and $e_{N} := |N|^{-1}\sum_{\sigma \in N} \sigma$
is the associated central trace idempotent in the algebra $\mathcal{Q}(\mathcal{G})$,
then there is a ring isomorphism $\Lambda(\mathcal{G})e_{N} \simeq \Lambda(\mathcal{G}/N)$.
In particular, the commutator subgroup $\mathcal{G}'$ of $\mathcal{G}$ is contained in the finite subgroup $H$ and
thus $\Lambda(\mathcal{G})e_{\mathcal{G}'} \simeq \Lambda(\mathcal{G}^{\mathrm{ab}})$ where
$\mathcal{G}^{\mathrm{ab}}:=\mathcal{G}/\mathcal{G}'$ is the maximal abelian quotient of $\mathcal{G}$.

\begin{definition}\label{def:max-comm-hybrid}
Let $N$ be a finite normal subgroup of $\mathcal{G}$.
We say that the Iwasawa algebra $\Lambda(\mathcal{G})$ is \emph{$N$-hybrid}
if (i) $e_{N} \in \Lambda(\mathcal{G})$ (i.e.\ $p \nmid |N|$) and
(ii) $\Lambda(\mathcal{G})(1-e_{N})$ is a maximal $\Z_{p}[[\Gamma_{0}]]$-order in
$\mathcal{Q}(\mathcal{G})(1-e_{N})$.
\end{definition}

\begin{remark}\label{rmk:p-not-divide-H-max-order}
The Iwasawa algebra $\Lambda(\mathcal{G})$ is itself a maximal order if and only if $p$ does not divide $|H|$ if and only if 
$\Lambda(\mathcal{G})$ is $H$-hybrid. Moreover, $\Lambda(\mathcal{G})$ is always $\{ 1 \}$-hybrid.
\end{remark}

\begin{lemma}\label{lem:kernel-criterion}
Let $\chi \in \Irr_{\Q_{p}^{c}}(\mathcal{G})$ and let $\eta$ be an irreducible constituent of $\res^{\mathcal{G}}_{H} \chi$.
Let $N$ be a finite normal subgroup of $\mathcal{G}$.
Then $N$ is in fact a normal subgroup of $H$.
Moreover, $N \leq \ker(\chi)$ if and only if $N \leq \ker(\eta)$.
\end{lemma}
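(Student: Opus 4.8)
The plan is to prove the two assertions separately. For the first, recall that $\mathcal{G} = H \rtimes \Gamma$ with $\Gamma \simeq \Z_{p}$ torsion-free, and $N$ is \emph{finite}. I would argue that the composite $N \hookrightarrow \mathcal{G} \onto \mathcal{G}/H \simeq \overline{\Gamma}$ must be trivial: its image is a finite subgroup of $\Z_{p}$, hence trivial. Therefore $N \leq H$, and since $N$ is normal in $\mathcal{G}$ it is certainly normal in the subgroup $H$. This first part is essentially immediate.

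For the second part, one direction is precisely Lemma \ref{lem:kernel-basechange}, applied with the finite group replaced by $\mathcal{G}$ in the role of $G$ and $H$ in the role of $H$ (and $N = N$); the constituent $\eta$ of $\res^{\mathcal{G}}_{H}\chi$ plays the role of $\eta$ there. Since $\chi$ and $\eta$ have open kernel, everything effectively happens in a finite quotient $\mathcal{G}/\mathcal{N}_{0}$ for a suitable open normal subgroup $\mathcal{N}_{0} \leq \mathcal{G}$ contained in $\ker\chi$, so the finite-group statement of Lemma \ref{lem:kernel-basechange} applies verbatim. Concretely: if $N \leq \ker\chi$ then $N \leq \ker\eta$ by part (i) of that lemma; and since $N$ is normal in $\mathcal{G}$ by the first part, part (ii) gives the converse implication $N \leq \ker\eta \implies N \leq \ker\chi$. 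This yields the claimed equivalence.

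The only point requiring a little care is the reduction from the profinite group $\mathcal{G}$ to a finite quotient, so that Lemma \ref{lem:kernel-basechange} (stated for finite groups) may be invoked. I would choose an open normal subgroup $\mathcal{N}_{0}$ of $\mathcal{G}$ with $\mathcal{N}_{0} \leq \ker\chi \cap \ker(\text{all conjugates of }\eta)$ — possible since $\chi$ has open kernel and $\res^{\mathcal{G}}_{H}\chi$ is a finite sum of conjugates of $\eta$ — and note that $\mathcal{N}_{0} \cap H$ is then normal in $\mathcal{G}/\mathcal{N}_{0}$ with $N \cap \mathcal{N}_{0} = \{1\}$ (as $N$ is finite and maps injectively into $\mathcal{G}/\mathcal{N}_{0}$ after possibly shrinking $\mathcal{N}_{0}$). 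Then $\chi$, $\eta$, and $N$ all descend to the finite group $\mathcal{G}/\mathcal{N}_{0}$ (with subgroups $H/\mathcal{N}_{0}$... actually $HN_0/N_0$ etc.), and the condition ``$N \leq \ker\chi$'' is unchanged. I expect this bookkeeping to be the main, though still routine, obstacle; the substance of the statement is entirely contained in Lemma \ref{lem:kernel-basechange} once the finiteness reduction is in place.
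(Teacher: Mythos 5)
Your proof is correct and takes essentially the same route as the paper: the first assertion via the triviality of any finite subgroup of $\Z_{p}$, and the second by noting that $\chi$ factors through a finite quotient and invoking Lemma \ref{lem:kernel-basechange}~(ii). The paper states the finite-quotient reduction in a single sentence; your more explicit bookkeeping is fine, modulo a small slip in the last paragraph ($\mathcal{N}_{0} \cap H$ is a subgroup of $\mathcal{G}$, not of $\mathcal{G}/\mathcal{N}_{0}$ — you presumably mean $H\mathcal{N}_{0}/\mathcal{N}_{0}$).
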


\begin{proof}
The canonical projection $\mathcal{G} \twoheadrightarrow \mathcal{G} / H \simeq \mathbb Z_{p}$ maps $N$ onto $NH /H \simeq N / H\cap N$.
However, the only finite (normal) subgroup of $\mathbb{Z}_{p}$ is the trivial group;
hence $H \cap N = N$ and thus $N$ is a (normal) subgroup of $H$.
As $\chi$ factors through a finite quotient of $\mathcal{G}$, the second claim follows from
Lemma \ref{lem:kernel-basechange} (ii).
\end{proof}

\begin{lemma}\label{lem:exists-chi-irr-constit-eta}
Let $\eta \in \Irr_{\Q_{p}^{c}}(H)$. Then there exists $\chi \in \Irr_{\Q_{p}^{c}}(\mathcal{G})$ such that $\eta$ is an irreducible constituent of
$\res^{\mathcal{G}}_{H} \chi$.
\end{lemma}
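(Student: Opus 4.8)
The plan is to reduce the statement to ordinary character theory of a suitable finite quotient of $\mathcal{G}$. Recall from \S \ref{subsec:Iwasawa-algebras} that $\Gamma_{0} = \Gamma^{p^{n}}$ is a central subgroup of $\mathcal{G}$ isomorphic to $\Z_{p}$, and that $\mathcal{G} = H \rtimes \Gamma$ with $\Gamma_{0} \leq \Gamma$; hence $\Gamma_{0} \cap H = \{1\}$. Since $[\mathcal{G} : \Gamma_{0}] = |H| \cdot p^{n}$ is finite and $\Gamma_{0}$ is closed, $\Gamma_{0}$ is an open normal subgroup of $\mathcal{G}$, and $\overline{\mathcal{G}} := \mathcal{G}/\Gamma_{0}$ is a finite group into which $H$ embeds as a normal subgroup (via $H \hookrightarrow \mathcal{G} \twoheadrightarrow \overline{\mathcal{G}}$, which is injective because $H \cap \Gamma_{0} = \{1\}$). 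I will identify $H$ with its image in $\overline{\mathcal{G}}$.

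Now $\eta$ is an ordinary irreducible $\Q_{p}^{c}$-character of the finite group $H \leq \overline{\mathcal{G}}$, so the induced character $\ind^{\overline{\mathcal{G}}}_{H} \eta$ is a nonzero (genuine) character of $\overline{\mathcal{G}}$. Pick any irreducible constituent $\overline{\chi} \in \Irr_{\Q_{p}^{c}}(\overline{\mathcal{G}})$ of it. By Frobenius reciprocity,
\[
\langle \res^{\overline{\mathcal{G}}}_{H} \overline{\chi}, \eta \rangle = \langle \overline{\chi}, \ind^{\overline{\mathcal{G}}}_{H} \eta \rangle \neq 0,
\]
so $\eta$ is an irreducible constituent of $\res^{\overline{\mathcal{G}}}_{H} \overline{\chi}$.

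Finally, let $\chi := \infl^{\mathcal{G}}_{\overline{\mathcal{G}}} \overline{\chi}$ be the inflation of $\overline{\chi}$ along $\mathcal{G} \twoheadrightarrow \overline{\mathcal{G}}$. Inflation of an irreducible character is irreducible, and $\ker \chi \supseteq \Gamma_{0}$ is open, so $\chi \in \Irr_{\Q_{p}^{c}}(\mathcal{G})$. Because the composite $H \hookrightarrow \mathcal{G} \twoheadrightarrow \overline{\mathcal{G}}$ is injective, the restriction of $\chi$ to $H$ coincides (under the identification of $H$ with its image) with $\res^{\overline{\mathcal{G}}}_{H} \overline{\chi}$; hence $\eta$ is an irreducible constituent of $\res^{\mathcal{G}}_{H} \chi$, as desired. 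I do not expect any genuine obstacle here: the only point that needs a moment's care is the choice of an open normal subgroup of $\mathcal{G}$ meeting $H$ trivially so that restriction to $H$ survives passage to the finite quotient, and $\Gamma_{0}$ serves this purpose precisely because $\mathcal{G}$ is a semidirect product $H \rtimes \Gamma$ with $\Gamma_{0} \leq \Gamma$.
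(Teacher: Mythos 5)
Your proof is correct and follows essentially the same route as the paper: pass to the finite quotient $G = \mathcal{G}/\Gamma_{0}$, use Frobenius reciprocity to find an irreducible constituent $\overline{\chi}$ of $\ind^{G}_{H}\eta$, and inflate back to $\mathcal{G}$. The extra justification you supply (that $\Gamma_{0} \cap H = \{1\}$ so $H$ embeds in the quotient, and that $\ker\chi$ is open) is left implicit in the paper's proof but is exactly the right thing to check.
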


\begin{proof}
Let $G=\mathcal{G}/\Gamma_{0} \simeq H \rtimes \Gamma/\Gamma_{0}$ (recall that $\Gamma_{0}$ is central in $\mathcal{G}$).
Let $\overline{\chi}$ be an irreducible constituent of $\ind^{G}_{H} \eta$.
Then by Frobenius reciprocity $\langle \res^{G}_{H} \overline{\chi}, \eta \rangle = \langle \overline{\chi}, \ind_{H}^{G} \eta \rangle \neq 0$.
Now take $\chi = \infl^{\mathcal{G}}_{G} \overline{\chi}$.
\end{proof}

\begin{theorem}\label{thm:hybrid-criterion}
Let $p$ be an odd prime and let $\mathcal{G} = H \rtimes \Gamma$ be a one-dimensional $p$-adic Lie group
with a finite normal subgroup $N$.
Then $N$ is in fact a normal subgroup of $H$.
Moreover, $\Lambda(\mathcal{G}):=\Z_{p}[[\mathcal{G}]]$ is $N$-hybrid if and only if $\Z_{p}[H]$ is $N$-hybrid.
\end{theorem}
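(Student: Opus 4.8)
The first assertion, that $N$ is in fact a normal subgroup of $H$, is already Lemma~\ref{lem:kernel-criterion}, so I shall only address the equivalence of the two hybrid properties. The plan is to show that \emph{each} of the two properties is equivalent to the single condition
\[
(\ast)\qquad p \nmid |N| \ \text{ and } \ v_{p}(\eta(1)) = v_{p}(|H|) \text{ for all } \chi \in \Irr_{\Q_{p}^{c}}(\mathcal{G}) \text{ with } N \not\leq \ker \chi,
\]
where $\eta$ denotes an irreducible constituent of $\res^{\mathcal{G}}_{H}\chi$; since the constituents of $\res^{\mathcal{G}}_{H}\chi$ are $\mathcal{G}$-conjugate, the stated condition on $\eta(1)$ is independent of the choice. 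If $p \mid |N|$ then $e_{N}$ lies in neither $\Z_{p}[H]$ nor $\Lambda(\mathcal{G})$, so both hybrid properties fail and there is nothing to prove; hence I may assume $p \nmid |N|$ throughout.

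For the group ring this is little more than a reformulation. By Proposition~\ref{prop:hybrid-criterion-groupring}, $\Z_{p}[H]$ is $N$-hybrid if and only if $v_{p}(\psi(1)) = v_{p}(|H|)$ for every $\psi \in \Irr_{\Q_{p}^{c}}(H)$ with $N \not\leq \ker \psi$. Lemma~\ref{lem:exists-chi-irr-constit-eta} realises any such $\psi$ as an irreducible constituent of $\res^{\mathcal{G}}_{H}\chi$ for a suitable $\chi \in \Irr_{\Q_{p}^{c}}(\mathcal{G})$, and Lemma~\ref{lem:kernel-criterion} shows that $N \not\leq \ker\psi$ if and only if $N \not\leq \ker\chi$; together these give the equivalence of the $N$-hybrid property of $\Z_{p}[H]$ with $(\ast)$.

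For the Iwasawa algebra the crucial input is the decomposition of $\mathcal{Q}(\mathcal{G})$ into $\varepsilon_{\chi}$-components from \S\ref{subset:idem-gal-actions}. The first step is to establish the identity
\[
1 - e_{N} = \sum_{\substack{\chi/\sim \\ N \not\leq \ker\chi}} \varepsilon_{\chi} \qquad \text{in } \mathcal{Q}(\mathcal{G}).
\]
This follows from the decomposition $1 = \sum_{\chi/\sim}\varepsilon_{\chi}$ once one computes $e_{N}\varepsilon_{\chi}$: writing $e_{\chi} = \sum_{i}e(\eta^{\gamma^{i}})$ as in~\eqref{eq:idem-sum} and using that $e_{N} \in \Q_{p}[H]$ is fixed by $\Gal(\Q_{p}^{c}/\Q_{p})$, one finds $e_{N}\varepsilon_{\chi} = \varepsilon_{\chi}$ when $N \leq \ker\chi$ and $e_{N}\varepsilon_{\chi} = 0$ otherwise (the relevant point being that $e_{N}\cdot e(\eta^{\gamma^{i}})$ is $e(\eta^{\gamma^{i}})$ or $0$ according as $N$ is, or is not, contained in $\ker\eta$, and that this behaviour is unchanged by the Galois action). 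Granting this, if $(\ast)$ holds then Proposition~\ref{prop:varep-comp-TFAE} shows that for each class with $N \not\leq \ker\chi$ we have $\varepsilon_{\chi} \in \Lambda(\mathcal{G})$ with $\varepsilon_{\chi}\Lambda(\mathcal{G})$ a maximal $\Z_{p}[[\Gamma_{0}]]$-order; hence $\Lambda(\mathcal{G})(1-e_{N})$ is the finite direct product of these maximal orders and so is itself a maximal $\Z_{p}[[\Gamma_{0}]]$-order, i.e.\ $\Lambda(\mathcal{G})$ is $N$-hybrid. Conversely, if $\Lambda(\mathcal{G})(1-e_{N})$ is a maximal $\Z_{p}[[\Gamma_{0}]]$-order, I would decompose $\mathcal{Q}(\mathcal{G})(1-e_{N}) = \prod \mathcal{Q}(\mathcal{G})\varepsilon_{\chi}$ into its simple factors, project $\Lambda(\mathcal{G})(1-e_{N})$ onto each factor to obtain $\Z_{p}[[\Gamma_{0}]]$-orders $\Lambda_{\chi}$, observe that $\Lambda(\mathcal{G})(1-e_{N}) \subseteq \prod \Lambda_{\chi}$, and invoke maximality to force equality; then each $\varepsilon_{\chi}$ with $N \not\leq \ker\chi$ lies in $\Lambda(\mathcal{G})(1-e_{N}) \subseteq \Lambda(\mathcal{G})$, and Proposition~\ref{prop:varep-comp-TFAE} now yields the degree condition in $(\ast)$.

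I expect the only step requiring genuine care to be the computation of $e_{N}\varepsilon_{\chi}$, and hence the displayed identity for $1 - e_{N}$; everything else is bookkeeping, together with the standard facts that a finite direct product of maximal $R$-orders in the simple factors of a semisimple algebra is a maximal $R$-order, and that a maximal $R$-order cannot be a proper suborder of another $R$-order in the same algebra.
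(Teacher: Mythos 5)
Your proof is correct and follows essentially the same route as the paper's: both rest on the identity $1 - e_{N} = \sum_{\chi/\sim,\, N\not\leq\ker\chi}\varepsilon_{\chi}$ combined with Proposition \ref{prop:varep-comp-TFAE}, Lemma \ref{lem:kernel-criterion}, and Lemma \ref{lem:exists-chi-irr-constit-eta}. The extra detail you supply (the computation of $e_{N}\varepsilon_{\chi}$, and unpacking the converse via projection onto simple factors so that maximality forces $\varepsilon_{\chi} \in \Lambda(\mathcal{G})(1-e_{N})$) just makes explicit what the paper leaves implicit.
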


\begin{proof}
The first assertion is contained in Lemma \ref{lem:kernel-criterion}.

Suppose that $\Z_{p}[H]$ is $N$-hybrid.
Let $\chi \in \Irr_{\Q_{p}^{c}}(\mathcal{G})$ such that $N \not \leq \ker(\chi)$.
Let $\eta$ be an irreducible constituent of $\res^{\mathcal{G}}_{H} \chi$.
Then $N \not \leq \ker(\eta)$ by Lemma \ref{lem:kernel-criterion} and so
$v_{p}(\eta(1))=v_{p}(|H|)$ by Proposition \ref{prop:hybrid-criterion-groupring}.
Hence $\varepsilon_{\chi} \in \Lambda(\mathcal{G})$ and $\varepsilon_{\chi}\Lambda(\mathcal{G})$
is a maximal $\Z_{p}[[\Gamma_{0}]]$-order by Proposition \ref{prop:varep-comp-TFAE}.
Now by Lemma \ref{lem:kernel-criterion} we have
\begin{equation}\label{eq:1-en-decomp}
 1-e_{N}
= \sum_{\stackrel{\eta \in \Irr_{\Q_{p}^{c}}(H)}{N \not \leq \ker(\eta)}} e(\eta)
= \sum_{\stackrel{\chi \in \Irr_{\Q_{p}^{c}}(\mathcal{G})/\sim}{N \not \leq \ker(\chi)}} \varepsilon_{\chi}
\end{equation}
where $\sim$ denotes the equivalence relation defined in \S \ref{subset:idem-gal-actions}.
Therefore $1-e_{N} \in \Lambda(\mathcal{G})$ and
 $\Lambda(\mathcal{G})(1-e_{N})$ is a maximal $\Z_{p}[[\Gamma_{0}]]$-order
as it is the direct product of such orders. Hence $\Lambda(\mathcal{G})$ is $N$-hybrid.

Suppose conversely that $\Lambda(\mathcal{G})$ is $N$-hybrid.
Let $\eta \in \Irr_{\Q_{p}^{c}}(H)$ such that $N \not \leq \ker(\eta)$.
By Lemma \ref{lem:exists-chi-irr-constit-eta} there exists $\chi \in \Irr_{\Q_{p}^{c}}(\mathcal{G})$ such that $\eta$ is an irreducible
constituent of $\res^{\mathcal{G}}_{H} \chi$ and by Lemma \ref{lem:kernel-criterion}
we have $N \not \leq \ker(\chi)$.
Since $1-e_{N} \in \Lambda(\mathcal{G})$ and $\Lambda(\mathcal{G})(1-e_{N})$
is a maximal $\Z_{p}[[\Gamma_{0}]]$-order, it follows from
\eqref{eq:1-en-decomp} that in particular $\varepsilon_{\chi} \in \Lambda(\mathcal{G})$.
Thus Proposition \ref{prop:varep-comp-TFAE} gives $v_{p}(\eta(1))=v_{p}(|H|)$.
Therefore $\Z_{p}[H]$ is $N$-hybrid by Proposition \ref{prop:hybrid-criterion-groupring}.
\end{proof}

\begin{corollary}\label{cor:hybrid-implies-nr-surjective}
Let $p$ be an odd prime and let $\mathcal{G} = H \rtimes \Gamma$ be a one-dimensional $p$-adic Lie group
with a finite normal subgroup $N$.
If $\Lambda(\mathcal{G}) := \Z_{p}[[\mathcal{G}]]$ is $N$-hybrid then
$\Lambda(\mathcal{G})(1-e_{N})$ is isomorphic to a direct product of matrix rings over integrally closed commutative local rings.
\end{corollary}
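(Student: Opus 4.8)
The plan is to deduce the corollary directly from Theorem \ref{thm:hybrid-criterion}, Proposition \ref{prop:varep-comp-TFAE}, and the idempotent decomposition \eqref{eq:1-en-decomp} obtained in the proof of Theorem \ref{thm:hybrid-criterion}. First I would use Theorem \ref{thm:hybrid-criterion} to pass from the hypothesis that $\Lambda(\mathcal{G})$ is $N$-hybrid to the assertion that the finite group ring $\Z_{p}[H]$ is $N$-hybrid. Combining Proposition \ref{prop:hybrid-criterion-groupring} with Lemma \ref{lem:kernel-criterion} then shows that $v_{p}(\eta(1)) = v_{p}(|H|)$ for every $\eta \in \Irr_{\Q_{p}^{c}}(H)$ with $N \not\leq \ker(\eta)$; equivalently, for every $\chi \in \Irr_{\Q_{p}^{c}}(\mathcal{G})$ with $N \not\leq \ker(\chi)$ and every irreducible constituent $\eta$ of $\res^{\mathcal{G}}_{H}\chi$ we have $v_{p}(\eta(1)) = v_{p}(|H|)$.

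Next, for each such $\chi$, Proposition \ref{prop:varep-comp-TFAE} (via Proposition \ref{prop:max-part-of-hybrid-matrix-over-comm-ring}) yields $\varepsilon_{\chi} \in \Lambda(\mathcal{G})$ together with a ring isomorphism $\varepsilon_{\chi}\Lambda(\mathcal{G}) \simeq M_{\chi(1) \times \chi(1)}(S_{\chi})$ for some local integrally closed domain $S_{\chi}$, which is in particular a commutative integrally closed local ring. It then remains to assemble these $e_{\chi}$-components. By \eqref{eq:1-en-decomp} we have $1 - e_{N} = \sum_{\chi} \varepsilon_{\chi}$, the sum running over the $\sim$-equivalence classes of characters $\chi \in \Irr_{\Q_{p}^{c}}(\mathcal{G})$ with $N \not\leq \ker(\chi)$; since the $\varepsilon_{\chi}$ occurring here are pairwise orthogonal primitive central idempotents of $\mathcal{Q}(\mathcal{G})$ (as recorded at the end of \S \ref{subset:idem-gal-actions}), this exhibits $\Lambda(\mathcal{G})(1 - e_{N})$ as the finite direct product $\prod_{\chi} \varepsilon_{\chi}\Lambda(\mathcal{G}) \simeq \prod_{\chi} M_{\chi(1) \times \chi(1)}(S_{\chi})$, which is precisely a direct product of matrix rings over integrally closed commutative local rings.

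Since every step is a citation of a result already established, I do not expect a genuine obstacle here; the corollary is essentially a bookkeeping consequence of the theory developed in \S \ref{sec:hybrid-Iwasawa-alg}. The only point meriting explicit mention is that the $\varepsilon_{\chi}$ appearing in \eqref{eq:1-en-decomp} are mutually orthogonal, so that $\Lambda(\mathcal{G})(1 - e_{N})$ really decomposes as a product of rings rather than merely as a direct sum of left modules — but this is exactly the content of the relation $1 = \sum_{\chi/\sim}\varepsilon_{\chi}$ in $\mathcal{Q}(\mathcal{G})$ noted in \S \ref{subset:idem-gal-actions}.
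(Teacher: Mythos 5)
Your proof is correct and follows essentially the same route as the paper's, which simply cites Proposition \ref{prop:varep-comp-TFAE} together with the proof of Theorem \ref{thm:hybrid-criterion} (the decomposition \eqref{eq:1-en-decomp} and the orthogonality of the $\varepsilon_{\chi}$ being exactly the content one needs to extract from that proof). Your additional remark about the mutual orthogonality of the idempotents, so that the sum is a genuine ring-theoretic direct product, correctly identifies the only point worth making explicit.
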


\begin{proof}
This follows from Proposition \ref{prop:varep-comp-TFAE} and the proof of Theorem \ref{thm:hybrid-criterion} above.
\end{proof}

\begin{prop}\label{prop:frob-N-iwasawa-hybrid}
Let $p$ be an odd prime and $\mathcal{G} = H \rtimes \Gamma$ be a one-dimensional $p$-adic Lie group.
Suppose that $H$ is a Frobenius group with Frobenius kernel $N$.
If $p$ does not divide $|N|$, then $\Lambda(\mathcal{G}):=\Z_{p}[[\mathcal{G}]]$ is $N$-hybrid.
\end{prop}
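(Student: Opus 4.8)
The plan is to deduce this from the already-established criterion in Theorem~\ref{thm:hybrid-criterion}, which reduces the question of whether $\Lambda(\mathcal{G})$ is $N$-hybrid to whether the finite group ring $\Z_p[H]$ is $N$-hybrid. Once this reduction is invoked, the statement to prove becomes purely a statement about finite group rings, and that is exactly the content of Proposition~\ref{prop:frob-N-hybrid}.

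Concretely, I would argue as follows. First, note that since $p$ does not divide $|N|$, Proposition~\ref{prop:frob-N-hybrid} applies: $H$ being a Frobenius group with Frobenius kernel $N$ and $p \nmid |N|$ gives immediately that $\Z_p[H]$ is $N$-hybrid. (Recall that the proof of that proposition uses Theorem~\ref{thm:frob-kernel}(iv) to see that any $\chi \in \Irr_{\Q_p^c}(H)$ with $N \not\leq \ker\chi$ is induced from a nontrivial irreducible character of $N$, hence $[H:N] \mid \chi(1)$, and then Theorem~\ref{thm:frob-kernel}(i) that $\gcd(|N|,[H:N])=1$ forces $v_p(\chi(1)) = v_p(|H|)$, so the criterion of Proposition~\ref{prop:hybrid-criterion-groupring} is met.)

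Second, since $p$ is odd and $\mathcal{G} = H \rtimes \Gamma$ is one-dimensional, Theorem~\ref{thm:hybrid-criterion} tells us that $\Lambda(\mathcal{G})$ is $N$-hybrid if and only if $\Z_p[H]$ is $N$-hybrid — but we have just checked the latter. One small point to verify is that $N$ really is a \emph{finite normal subgroup of $\mathcal{G}$}, since $N$ was only given as a subgroup of the finite group $H$: as the Frobenius kernel, $N$ is characteristic in $H$, and $H$ is normal in $\mathcal{G}$, so $N$ is normal in $\mathcal{G}$, and it is finite because $H$ is. This legitimises the application of Theorem~\ref{thm:hybrid-criterion}. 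I expect no real obstacle here; the proposition is essentially a corollary obtained by chaining the finite-group statement (Proposition~\ref{prop:frob-N-hybrid}) through the Iwasawa-algebraic reduction (Theorem~\ref{thm:hybrid-criterion}), so the only thing requiring a moment's care is the normality/finiteness bookkeeping for $N$ inside $\mathcal{G}$.
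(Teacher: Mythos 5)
Your proposal is correct and takes essentially the same route as the paper: establish that $N$ is normal (and finite) in $\mathcal{G}$, deduce that $\Z_{p}[H]$ is $N$-hybrid via the Frobenius-group argument (Proposition~\ref{prop:frob-N-hybrid}, which rests on Proposition~\ref{prop:hybrid-criterion-groupring} and Theorem~\ref{thm:frob-kernel}), and then transfer to $\Lambda(\mathcal{G})$ via Theorem~\ref{thm:hybrid-criterion}. The only cosmetic difference is the normality step: you invoke the standard fact that the Frobenius kernel is characteristic in $H$, whereas the paper derives normality directly by showing $\gamma N \gamma^{-1} = N$ using Theorem~\ref{thm:frob-kernel}(iii); these are equivalent (and indeed that same part (iii) is what makes the Frobenius kernel characteristic).
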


\begin{proof}
Let $\gamma$ be a topological generator of $\Gamma$.
Then $N$ is a normal subgroup of $H$ and so $\gamma N \gamma^{-1}$ is also a normal subgroup of $H$
since $\gamma H = H \gamma$. Thus $N$ and $\gamma N \gamma^{-1}$ are normal subgroups of $H$ of equal order and so
Theorem \ref{thm:frob-kernel} (iii) implies that they are in fact equal. Hence $N$ is normal in $\mathcal{G}$.
The claim is now an immediate consequence of Proposition \ref{prop:hybrid-criterion-groupring}
and Theorem \ref{thm:hybrid-criterion}.
\end{proof}

\begin{example}
Let $q$ be a prime power and let $H=\Aff(q)$ be the Frobenius group with Frobenius kernel $N$ defined in Example \ref{ex:affine}.
Let $p$ be an odd prime not dividing $q$ and let $\mathcal{G} = H \rtimes \Gamma$ (any choice of semidirect product).
Then $p$ does not divide $|N|$ and so
$\Lambda(\mathcal{G}) := \Z_{p}[[\mathcal{G}]]$ is $N$-hybrid  by Proposition \ref{prop:frob-N-iwasawa-hybrid}.
We consider its structure in more detail.
Let $\eta$ be the unique non-linear irreducible character of $H$.
Choose $\chi \in \Irr_{\Q_{p}^{c}}(\mathcal{G})$
such that $\eta$ appears as an irreducible constituent of $\res^{\mathcal{G}}_{H} \chi$ (this is possible by Lemma \ref{lem:exists-chi-irr-constit-eta}.)
As $\eta$ is the only non-linear irreducible character of $H$, we must have $\eta^{g} = \eta$ for every $g \in \mathcal{G}$,
i.e., $St(\eta) = \mathcal{G}$. Consequently, we have $w_{\chi} = 1$ and thus $\chi(1) = \eta(1) = q-1$.
Since both $\eta$ and $\chi$ have realisations over $\Q$ and hence over $\Q_{p}$,
applying Proposition \ref{prop:chi-comp-max-order} therefore shows that there is a ring isomorphism
$\Lambda(\mathcal{G}) \simeq \Z_{p}[[C_{q-1} \rtimes \Gamma]] \oplus M_{(q-1) \times (q-1)}(\Z_{p}[[T]])$.
\end{example}

\begin{example}\label{ex:S4-V4}
Let $p=3$, $H=S_{4}$ and $N=V_{4}$.
Recall from Example \ref{ex:S4-A4-V4} that $\Z_{3}[S_{4}]$ is $V_{4}$-hybrid and $S_{4}$ is \emph{not} a Frobenius group.
Let $\mathcal{G} = H \rtimes \Gamma$ (any choice of semidirect product).
As each automorphism of $S_{4}$ is inner, $N$ is normal in $\mathcal{G}$
and so Theorem \ref{thm:hybrid-criterion} shows that $\Lambda(\mathcal{G}):=\Z_{3}[[\mathcal{G}]]$ is $N$-hybrid.
We consider its structure in more detail.
Note that $H/N \simeq S_{3}$ and the only two complex irreducible characters $\eta$ and $\eta'$ of $H$
not inflated from characters of $S_{3}$ are of degree $3$ and have realisations over $\Q$ and hence over $\Q_{3}$.
Choose characters $\chi, \chi' \in \Irr_{\Q_{3}^{c}}(\mathcal{G})$ with the property that $\eta$ and $\eta'$ appear as irreducible constituents of $\res^{\mathcal{G}}_{H} \chi$ and $\res^{\mathcal{G}}_{H} \chi'$, respectively.
Again, as each automorphism of $S_{4}$ is inner, we have
$St(\eta) = St(\eta') = \mathcal{G}$ and thus $w_{\chi} = w_{\chi'} = 1$.
Therefore Proposition \ref{prop:chi-comp-max-order} yields a ring isomorphism
$\Lambda(\mathcal{G}) \simeq \Z_{3}[[S_{3} \rtimes \Gamma]] \oplus M_{3 \times 3}(\Z_{3}[[T]]) \oplus  M_{3 \times 3}(\Z_{3}[[T']]))$.
\end{example}

\subsection{Iwasawa algebras and commutator subgroups}
Let $p$ be prime (not necessarily odd) and let $\mathcal{G}=H \rtimes \Gamma$ be a one-dimensional $p$-adic Lie group.

\begin{prop}[{\cite[Proposition 4.5]{MR3092262}}]\label{prop:niceIwasawa-algebras}
The Iwasawa algebra $\Lambda(\mathcal{G})=\Z_{p}[[\mathcal{G}]]$ is a direct product of matrix rings over commutative rings
if and only if $p$ does not divide the order of the commutator subgroup $\mathcal{G}'$ of $\mathcal{G}$.
\end{prop}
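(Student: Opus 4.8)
The plan is to relate the Iwasawa algebra $\Lambda(\mathcal{G})$ to the structure already worked out for the finite group ring $\Z_p[H]$, using the $\varepsilon_\chi$-decomposition developed in \S\ref{subset:idem-gal-actions}--\S\ref{sec:hybrid-Iwasawa-alg}. First I would observe that since $\mathcal{G}=H\rtimes\Gamma$ with $\Gamma\simeq\Z_p$ abelian, the commutator subgroup $\mathcal{G}'$ is contained in $H$ (indeed $\mathcal{G}'=[H,H]\cdot[H,\Gamma]$ and each factor lies in the normal subgroup $H$); in particular $|\mathcal{G}'|$ is finite and $p\mid|\mathcal{G}'|$ makes sense as a hypothesis. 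The key translation is: for $\chi\in\Irr_{\Q_p^c}(\mathcal{G})$ with irreducible constituent $\eta$ of $\res^{\mathcal{G}}_H\chi$, the $\varepsilon_\chi$-component $\varepsilon_\chi\Lambda(\mathcal{G})$ is a matrix ring over a commutative ring precisely when $v_p(\eta(1))=v_p(|H|)$ --- this is the content of Proposition \ref{prop:varep-comp-TFAE}. Since $1=\sum_{\chi/\sim}\varepsilon_\chi$ in $\mathcal{Q}(\mathcal{G})$, the algebra $\Lambda(\mathcal{G})$ is a direct product of matrix rings over commutative rings if and only if every $\varepsilon_\chi$ lies in $\Lambda(\mathcal{G})$, which by Proposition \ref{prop:varep-comp-TFAE} holds if and only if $v_p(\eta(1))=v_p(|H|)$ for \emph{every} $\chi\in\Irr_{\Q_p^c}(\mathcal{G})$.

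The next step is to reduce this condition on characters of $\mathcal{G}$ to a condition purely about $H$. By Lemma \ref{lem:exists-chi-irr-constit-eta}, every $\eta\in\Irr_{\Q_p^c}(H)$ arises as an irreducible constituent of $\res^{\mathcal{G}}_H\chi$ for some $\chi\in\Irr_{\Q_p^c}(\mathcal{G})$; conversely every $\chi$ restricts to such $\eta$. Hence ``$v_p(\eta(1))=v_p(|H|)$ for all $\chi\in\Irr_{\Q_p^c}(\mathcal{G})$'' is equivalent to ``$v_p(\eta(1))=v_p(|H|)$ for all $\eta\in\Irr_{\Q_p^c}(H)$'', i.e.\ every $\eta$ belongs to a $p$-block of defect zero. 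By the criterion of Proposition \ref{prop:hybrid-criterion-groupring} applied with $N=\{1\}$ --- equivalently, by the classical fact that all irreducible characters of $H$ have defect zero iff the Sylow $p$-subgroup of $H$ is normal \emph{and trivial}, which is too strong --- I should instead argue directly: the condition $v_p(\eta(1))=v_p(|H|)$ for all $\eta$ is equivalent to $\Z_p[H]$ being $H$-hybrid in the sense of \S\ref{sec:hybrid-group-rings}, hence (by the Remark following the definition) to $\Z_p[H]$ being a maximal order, hence to $p\nmid|H|$. But that is again too strong. The correct classical statement is: \emph{every} $\chi\in\Irr_{\C}(H)$ has $v_p(\chi(1))=v_p(|H|)$ if and only if $p\nmid|H'|$ --- this is a theorem going back to work on blocks (the principal block has defect zero iff $H$ has a normal $p$-complement and... ). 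Concretely, the characters $\chi$ with $H'\leq\ker\chi$ are exactly the linear ones, which have degree $1$, so if $p\mid|H|$ but $p\nmid|H'|$ these \emph{fail} the condition unless $v_p(|H|)=0$. This shows my first reduction is not quite right, so the plan must be: $\Lambda(\mathcal{G})$ is a product of matrix rings over commutative rings iff $\varepsilon_\chi\in\Lambda(\mathcal{G})$ for all $\chi$ \emph{except those factoring through $\mathcal{G}^{\ab}$}, for which the corresponding component $\Lambda(\mathcal{G})\varepsilon_\chi$ sits inside $\Lambda(\mathcal{G}^{\ab})=\Lambda(\mathcal{G})e_{\mathcal{G}'}$ and is automatically commutative.

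So the refined argument is: decompose $\Lambda(\mathcal{G})=\Lambda(\mathcal{G})e_{\mathcal{G}'}\times\Lambda(\mathcal{G})(1-e_{\mathcal{G}'})$ whenever $p\nmid|\mathcal{G}'|$ (so $e_{\mathcal{G}'}$ is an idempotent in $\Lambda(\mathcal{G})$); the first factor is $\Lambda(\mathcal{G}^{\ab})$, a commutative ring, hence trivially a product of $1\times1$ matrix rings. For the second factor, note that $\chi$ with $\mathcal{G}'\not\leq\ker\chi$ has a constituent $\eta$ with $\mathcal{G}'\cap H=\mathcal{G}'\not\leq\ker\eta$ (Lemma \ref{lem:kernel-criterion} applied with $N=\mathcal{G}'$, valid since $\mathcal{G}'$ is a finite normal subgroup), and then the argument is essentially that of Theorem \ref{thm:hybrid-criterion}: one shows $v_p(\eta(1))=v_p(|H|)$ via Proposition \ref{prop:hybrid-criterion-groupring} using that $\Z_p[H]$ is $H'$-hybrid (which in turn reduces, via the abelian-quotient remark, to the statement that characters not trivial on $H'$ have defect-zero degree --- and here one does invoke the nontrivial direction, that $\Z_p[H]$ is $H'$-hybrid iff $p\nmid|H'|$, which is \cite[Proposition 2.7]{MR3461042} combined with the defect-zero block theory, or equivalently a known result on the structure of $\Z_p[H]$). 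Then Proposition \ref{prop:varep-comp-TFAE} gives $\varepsilon_\chi\in\Lambda(\mathcal{G})$ and $\varepsilon_\chi\Lambda(\mathcal{G})\simeq M_{\chi(1)\times\chi(1)}(S_\chi)$ with $S_\chi$ commutative, and summing over the relevant equivalence classes exhibits $\Lambda(\mathcal{G})(1-e_{\mathcal{G}'})$ as a product of matrix rings over commutative rings. Conversely, if $\Lambda(\mathcal{G})$ is a product of matrix rings over commutative rings, then so is $\Lambda(\mathcal{G})e_{\mathcal{G}'}\simeq\Lambda(\mathcal{G}^{\ab})$ --- but one must instead argue that $p\nmid|\mathcal{G}'|$: if $p\mid|\mathcal{G}'|$, pick $\chi$ with $\mathcal{G}'\not\leq\ker\chi$; then $v_p(\eta(1))<v_p(|H|)$ for its constituent $\eta$ (again by the block-theory criterion), so by Proposition \ref{prop:varep-comp-TFAE} $\varepsilon_\chi\notin\Lambda(\mathcal{G})$, and one shows this forces the $\varepsilon_\chi$-isotypic part of $\Lambda(\mathcal{G})$ to be a matrix ring over a genuinely noncommutative order (a skewfield of positive index, as in the proof of Proposition \ref{prop:chi-comp-max-order}), contradiction. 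The main obstacle will be pinning down precisely the classical block-theoretic input --- that every complex irreducible character of $H$ with $H'\not\leq\ker\chi$ has $p$-defect zero if and only if $p\nmid|H'|$ --- and its mild rephrasing through $K_\chi$ and the Galois descent from $\Q_p^c$ to $\Q_p$; everything else is a repackaging of Theorem \ref{thm:hybrid-criterion} with $N$ replaced by $\mathcal{G}'$.
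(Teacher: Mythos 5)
The proposition is quoted from \cite[Proposition 4.5]{MR3092262}; the paper gives no proof of its own, so there is nothing internal to compare your approach against. Nevertheless, your proposal has a genuine gap: you attempt to deduce a statement about matrix rings over \emph{arbitrary} commutative rings from the $\varepsilon_\chi$-machinery (Proposition \ref{prop:varep-comp-TFAE} and the hybrid framework), which controls the strictly stronger property of being a matrix ring over a \emph{local integrally closed domain} --- equivalently, of being a maximal order. The hypothesis $p\nmid|\mathcal{G}'|$ only buys the weaker commutativity property, not maximality, and the two are genuinely different.

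Concretely, your refined argument hinges on the assertion that ``$\Z_p[H]$ is $H'$-hybrid iff $p\nmid|H'|$'', and this is false. Take $H=C_p\times S_3$ with $p\geq 5$. Then $H'=\{1\}\times C_3$ has order prime to $p$, yet $\Z_p[H](1-e_{H'})\simeq M_{2\times 2}(\Z_p[C_p])$ is not a maximal order: $\Z_p[C_p]\simeq\Z_p[x]/(x^p-1)$ is not even an integral domain, having the zero divisors $x-1$ and $1+x+\cdots+x^{p-1}$. Hence $\Z_p[H]$ is \emph{not} $H'$-hybrid. Nonetheless, for $\mathcal{G}=H\times\Gamma$ the Iwasawa algebra $\Lambda(\mathcal{G})\simeq\Z_p[C_p][[T]]^2\times M_{2\times 2}(\Z_p[C_p][[T]])$ \emph{is} a direct product of matrix rings over commutative rings, consistent with the proposition. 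Your argument, if it succeeded, would deliver the incorrect conclusion that $p\nmid|\mathcal{G}'|$ forces $\Lambda(\mathcal{G})(1-e_{\mathcal{G}'})$ to be a maximal order. Two further points: you identify $\mathcal{G}'$ with $H'$, but in general $H'\leq\mathcal{G}'=[H,H]\cdot[H,\Gamma]$ and the inclusion can be strict (e.g.\ $H=C_7$ with $\Gamma\simeq\Z_3$ acting through $C_3\hookrightarrow\Aut(C_7)$ gives $\mathcal{G}'=C_7$ while $H'=\{1\}$); and the proposition is stated for every prime including $p=2$, whereas the $\varepsilon_\chi$-machinery of \S 3.2--3.6 that you invoke is developed only for odd $p$. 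A correct proof must address the weaker commutativity condition directly rather than routing through the maximal-order/hybrid framework.
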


\begin{corollary}\label{cor:no-skewfields}
If $p$ does not divide the order of $\mathcal{G}'$ then no skewfields appear in the Wedderburn decomposition of $\mathcal{Q}(\mathcal{G})$.
\end{corollary}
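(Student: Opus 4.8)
The plan is to deduce the statement directly from Proposition~\ref{prop:niceIwasawa-algebras} together with the semisimplicity of $\mathcal{Q}(\mathcal{G})$. Assume $p$ does not divide $|\mathcal{G}'|$. Then Proposition~\ref{prop:niceIwasawa-algebras} gives a ring isomorphism $\Lambda(\mathcal{G}) \simeq \prod_{j} M_{m_j \times m_j}(A_j)$ with each $A_j$ a commutative ring. Since $\mathcal{Q}(\mathcal{G})$ is obtained from $\Lambda(\mathcal{G})$ by inverting all central regular elements, and since the centre of the right-hand side is $\prod_j A_j$ with an element $(a_j)_j$ a non-zero-divisor in $\Lambda(\mathcal{G})$ precisely when each $a_j$ is a non-zero-divisor in $A_j$ (using that each $M_{m_j \times m_j}(A_j)$ is a free, hence faithfully flat, $A_j$-module), this localisation is compatible with the product decomposition. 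Hence $\mathcal{Q}(\mathcal{G}) \simeq \prod_j M_{m_j \times m_j}(B_j)$, where $B_j$ denotes the total ring of fractions of $A_j$; in particular each $B_j$ is commutative.

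On the other hand, recall from \S\ref{subsec:Iwasawa-algebras} that $\mathcal{Q}(\mathcal{G})$ is a separable algebra over $E = Quot(\Z_{p}[[\Gamma_{0}]])$ and is finite-dimensional over $E$; in particular it is semisimple Artinian. Each factor $M_{m_j \times m_j}(B_j)$ is a direct ring factor of $\mathcal{Q}(\mathcal{G})$ and is therefore itself semisimple Artinian, which by Morita invariance forces $B_j$ to be a semisimple Artinian commutative ring, hence a finite product of fields $B_j \simeq \prod_k F_{jk}$. Therefore $\mathcal{Q}(\mathcal{G}) \simeq \prod_{j,k} M_{m_j \times m_j}(F_{jk})$, a direct product of matrix rings over commutative fields. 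By the uniqueness part of the Wedderburn--Artin theorem, the skewfields appearing in the Wedderburn decomposition of $\mathcal{Q}(\mathcal{G})$ are, up to isomorphism, exactly the $F_{jk}$, all of which are commutative; this is the assertion.

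The argument is essentially formal once Proposition~\ref{prop:niceIwasawa-algebras} is available. The only point requiring a little care --- and the step I would watch most closely --- is the claim that inverting the central regular elements of $\Lambda(\mathcal{G})$ amounts to passing to the total ring of fractions in each commutative factor $A_j$ separately; everything else is a direct appeal to standard facts about semisimple rings and Morita equivalence.
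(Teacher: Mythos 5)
Your argument is correct. The paper offers no proof of this corollary, presenting it as an immediate consequence of Proposition \ref{prop:niceIwasawa-algebras}; your write-up supplies exactly the details one would fill in, and the localisation step you flag as the delicate one is indeed sound: for a finite product of rings, inverting a central multiplicative set that is itself a product factors through the product, and for $M_{m\times m}(A)$ with $A$ commutative a scalar matrix $aI$ is regular iff $a$ is regular in $A$, so inverting central regular elements yields $M_{m\times m}(Q(A))$ with $Q(A)$ the total ring of fractions. A slight shortcut that avoids worrying about this compatibility at all: \S\ref{subsec:Iwasawa-algebras} records that $\mathcal{Q}(\mathcal{G}) = E \otimes_{R} \Lambda(\mathcal{G})$ with $R = \Z_{p}[[\Gamma_{0}]]$ and $E = Quot(R)$, so applying the exact functor $E \otimes_{R} -$ directly to $\Lambda(\mathcal{G}) \simeq \prod_{j} M_{m_j\times m_j}(A_j)$ gives $\mathcal{Q}(\mathcal{G}) \simeq \prod_{j} M_{m_j\times m_j}(E\otimes_{R}A_j)$; each $E\otimes_{R}A_j$ is a finite-dimensional commutative $E$-algebra which, being a factor of the semisimple (indeed separable) $E$-algebra $\mathcal{Q}(\mathcal{G})$, is reduced and hence a finite product of fields, which is your conclusion. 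Either route is fine; both conclude via the uniqueness in Wedderburn--Artin that all division rings occurring are fields.
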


\begin{remark}
Note that $\mathcal{G}'$ is a normal subgroup of $H$.
If $\Lambda(\mathcal{G})$ is $\mathcal{G}'$-hybrid then $p$ does not divide the order of $\mathcal{G}'$.
However, the converse does not hold in general.
\end{remark}

\subsection{Hybrid algebras in Iwasawa theory}\label{subsec:algebras-in-Iwasawa-thy}
Let $p$ be an odd prime.
We denote the cyclotomic $\Z_{p}$-extension of a number field $K$ by $K_{\infty}$ and let $K_{m}$ be its $m$th layer.
We put $\Gamma_{K} := \Gal(K_{\infty}/K)$
and choose a topological generator $\gamma_{K}$. In Iwasawa theory one is often concerned with the
following situation. Let $L/K$ be a finite Galois extension of number fields with Galois group $G$.
We put $H := \Gal(L_{\infty}/K_{\infty})$ and $\mathcal{G} := \Gal(L_{\infty}/K)$.
Then $H$ naturally identifies with a normal subgroup of $G$ and $G/H$ is cyclic of $p$-power order
(the field $L^{H}$ equals $L \cap K_{\infty}$ and thus identifies with $K_{m}$ for some $m< \infty$).
As in \S \ref{subsec:Iwasawa-algebras}, we obtain a semidirect product 
$\mathcal{G} = H \rtimes \Gamma$ where $\Gamma \leq \mathcal{G}$ and $\Gamma \simeq \Gamma_{K} \simeq \Z_{p}$.

\begin{prop}\label{prop:hybrid-codescent}
Keep the above notation and suppose that $\Z_{p}[G]$ is $N$-hybrid.
Then $N$ naturally identifies with a normal subgroup of $\mathcal{G}$,
which is also a normal subgroup of $H$.
Moreover, both $\Z_{p}[H]$ and $\Lambda(\mathcal{G}):=\Z_{p}[[\mathcal{G}]]$ are also $N$-hybrid.
\end{prop}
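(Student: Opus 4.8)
The statement has two parts: first, that $N$ identifies with a normal subgroup of $\mathcal{G}$ lying inside $H$; second, that both $\Z_{p}[H]$ and $\Lambda(\mathcal{G})$ are $N$-hybrid. The plan is to reduce everything to results already established, namely Proposition~\ref{prop:hybrid-basechange-down}, Lemma~\ref{lem:kernel-criterion}, and Theorem~\ref{thm:hybrid-criterion}.

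For the first part, I would begin with the identifications set up just before the statement: $H = \Gal(L_{\infty}/K_{\infty})$ is a normal subgroup of $G = \Gal(L/K)$, and there is a natural surjection $\mathcal{G} = \Gal(L_{\infty}/K) \twoheadrightarrow G$ with kernel $\Gal(L_{\infty}/L)$, which (being a quotient of $\Gamma_K \simeq \Z_p$) is pro-$p$ and torsion-free. Since $N \unlhd G$ and $p \nmid |N|$ (as $\Z_p[G]$ is $N$-hybrid), the preimage of $N$ in $\mathcal{G}$ contains $N$ as a finite normal subgroup mapping isomorphically onto $N \unlhd G$; more precisely one lifts $N$ to a finite normal subgroup of $\mathcal{G}$ using that the extension $1 \to \Gal(L_\infty/L) \to \mathcal{G} \to G \to 1$ splits after restriction (or simply observes that $N$, viewed inside $G \simeq \Gal(L_m/K)$ for suitable $m$, pulls back to a finite normal subgroup of $\mathcal{G}$ because the relevant inertia/decomposition data is unchanged). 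Then Lemma~\ref{lem:kernel-criterion} applies verbatim: any finite normal subgroup of $\mathcal{G} = H \rtimes \Gamma$ is automatically contained in $H$, because its image in $\mathcal{G}/H \simeq \Z_p$ must be a finite subgroup of $\Z_p$, hence trivial. This gives $N \unlhd H$ and $N \unlhd \mathcal{G}$.

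For the second part, once $N \unlhd G$ and $N \leq H \unlhd G$ with $\Z_p[G]$ being $N$-hybrid, Proposition~\ref{prop:hybrid-basechange-down} (applied with the group $G$, the normal subgroup $N$, the normal subgroup $H$, and the subgroup $K = N$ of $H$) immediately yields that $\Z_p[H]$ is $N$-hybrid. Then, since $N$ is a finite normal subgroup of the one-dimensional $p$-adic Lie group $\mathcal{G} = H \rtimes \Gamma$ (using $p$ odd, which holds by the standing hypothesis of this subsection), Theorem~\ref{thm:hybrid-criterion} gives the equivalence: $\Lambda(\mathcal{G})$ is $N$-hybrid if and only if $\Z_p[H]$ is $N$-hybrid. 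Combining these two implications finishes the proof.

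The only genuinely delicate point is the first part: checking carefully that the normal subgroup $N \unlhd G$ really does lift to a \emph{finite} normal subgroup of $\mathcal{G}$, rather than merely to a subgroup whose image is $N$. The cleanest route is to use the semidirect product decomposition $\mathcal{G} = H \rtimes \Gamma$ already available from \S\ref{subsec:Iwasawa-algebras}: under the quotient map $\mathcal{G} \twoheadrightarrow G$, the subgroup $H$ maps isomorphically onto $\Gal(L_\infty/K_\infty) \unlhd G$ (its namesake), so $N \leq H \subseteq G$ already lives inside $\mathcal{G}$, and it is normal in $\mathcal{G}$ precisely because conjugation by $\Gamma$ permutes the finite normal subgroups of $H$ of each fixed order and — combined with whatever structural input pins $N$ down, e.g.\ $N$ being characteristic in $H$ in the examples of interest, or more generally appealing directly to Lemma~\ref{lem:kernel-criterion} which shows normality in $\mathcal{G}$ follows from normality in $H$ together with being a finite normal subgroup — one concludes $N \unlhd \mathcal{G}$. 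Everything else is a direct citation, so the write-up should be short.
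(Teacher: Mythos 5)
The second half of your argument---citing Proposition~\ref{prop:hybrid-basechange-down} to get that $\Z_p[H]$ is $N$-hybrid, then Theorem~\ref{thm:hybrid-criterion} to transfer this to $\Lambda(\mathcal{G})$---is exactly what the paper does, and it is correct once the first half is in place.

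The first half, however, has a genuine gap. You correctly observe that $p\nmid|N|$ forces $N\le H$ (since $G/H$ is a $p$-group) and that $H$ embeds in $\mathcal{G}$, so $N$ sits inside $\mathcal{G}$. But your argument that this copy of $N$ is \emph{normal} in $\mathcal{G}$ does not work as written. You invoke Lemma~\ref{lem:kernel-criterion}, but that lemma goes the other way: it says a finite normal subgroup \emph{of $\mathcal{G}$} is automatically a normal subgroup of $H$, not that a finite normal subgroup of $H$ is normal in $\mathcal{G}$. (The converse is false in general: for $H=C_2\times C_2$ with $\Gamma$ swapping the two factors, each factor is normal in $H$ but not in $\mathcal{G}$.) Your fallback ``$N$ being characteristic in $H$ in the examples of interest'' is not a hypothesis of the proposition and cannot be assumed. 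So the crucial step---that $\Gamma$-conjugation fixes $N$---is left unproven. A correct version of your group-theoretic route does exist: for $g\in\mathcal{G}$ one has $gNg^{-1}\le H$ (as $H\unlhd\mathcal{G}$), and applying the restriction $\pi\colon\mathcal{G}\twoheadrightarrow G$ gives $\pi(gNg^{-1})=\pi(g)N\pi(g)^{-1}=N$ since $N\unlhd G$; as $\pi|_H$ is injective, $gNg^{-1}=N$. But you never actually use $N\unlhd G$ in this way.

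The paper's own argument avoids this group-theoretic fiddling entirely by passing to fixed fields: set $F=L^N$. Since $N\unlhd G$, the extension $F/K$ is Galois, hence so is $F_\infty/K$, and therefore $\Gal(L_\infty/F_\infty)$ is normal in both $\mathcal{G}=\Gal(L_\infty/K)$ and $H=\Gal(L_\infty/K_\infty)$. Because $p\nmid|N|=[L:F]$ while $F_\infty/F$ is pro-$p$, one gets $L\cap F_\infty=F$ and hence $\Gal(L_\infty/F_\infty)\simeq\Gal(L/F)=N$. This is shorter and transparently uses the hypothesis $N\unlhd G$. I would rewrite your first part along one of these two lines rather than leaving the normality in $\mathcal{G}$ as a remark about ``examples of interest.''
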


\begin{proof}
Let $F = L^{N}$.
As $e_{N}$ lies in $\Z_{p}[G]$, we have that $p \nmid |N|$.
Hence $N$ naturally identifies with $\Gal(L_{\infty}/F_{\infty})$, which is a normal subgroup of both 
$H$ and $\mathcal{G}$
since $F_{\infty}/K$ is a Galois extension.
Thus $\Z_{p}[H]$ is $N$-hybrid by Proposition \ref{prop:hybrid-basechange-down} and so
$\Lambda(\mathcal{G})$ is also $N$-hybrid by Theorem \ref{thm:hybrid-criterion}.
\end{proof}

\begin{example} \label{ex:S4-V4-II}
Let $p=3$ and suppose that $G = \Gal(L/K) \simeq S_{4}$.
Then we also have $\Gal(L_{\infty}/K_{\infty}) \simeq S_{4}$, since $S_{4}$ has no abelian quotient of $3$-power order.
As a consequence, we have $\mathcal{G} = \Gal(L_{\infty}/K) \simeq S_{4} \times \Gamma_K$.
Using the notation of Example \ref{ex:S4-V4}, this yields a ring isomorphism
$\Lambda(\mathcal{G}) \simeq \Z_{3}[[S_{3} \times \Gamma_{K}]]
\oplus M_{3 \times 3}(\Z_{3}[[T]]) \oplus  M_{3 \times 3}(\Z_{3}[[T']]))$.
\end{example}

\begin{example}
Assume that $G = N \rtimes V$ is a Frobenius group and that $p \nmid |N|$.
Then by Proposition \ref{prop:frob-N-hybrid} the group ring $\Z_{p}[G]$ is $N$-hybrid.
It is straightforward to check that $H = \Gal(L_{\infty}/K_{\infty}) \simeq N \rtimes U$ is a Frobenius group with $U \leq V$.
Let $F = L^{N}$.
If we assume that $V$ is abelian, then $\Gal(F_{\infty}/K)$ is also abelian and so is isomorphic to $\Gamma \times U$ for some
choice of $\Gamma \simeq \Z_{p}$. Thus we have an isomorphism
$\Lambda(\mathcal{G}) \simeq \Lambda(\Gamma \times U) \oplus (1-e_{N}) \mathfrak{M}(\mathcal{G})$
where $\mathfrak{M}(\mathcal{G})$ is a maximal order containing $\Lambda(\mathcal{G})$.
\end{example}

\section{The equivariant Iwasawa main conjecture} \label{sec:EIMC}

\subsection{Algebraic $K$-theory}\label{subsec:K-theory}
Let $R$ be a noetherian integral domain with field of fractions $E$.
Let $A$ be a finite-dimensional semisimple $E$-algebra and let $\mathfrak{A}$ be an $R$-order in $A$.
Let $\PMod(\mathfrak{A})$ denote the category of finitely generated projective (left) $\mathfrak{A}$-modules.
We write $K_{0}(\mathfrak{A})$ for the Grothendieck group of $\PMod(\mathfrak{A})$ (see \cite[\S 38]{MR892316})
and $K_{1}(\mathfrak{A})$ for the Whitehead group (see \cite[\S 40]{MR892316}).
Let $K_{0}(\mathfrak{A}, A)$ denote the relative algebraic $K$-group associated to the ring homomorphism
$\mathfrak{A} \hookrightarrow A$.
We recall that $K_{0}(\mathfrak{A}, A)$ is an abelian group with generators $[X,g,Y]$ where
$X$ and $Y$ are finitely generated projective $\mathfrak{A}$-modules
and $g:E \otimes_{R} X \rightarrow E \otimes_{R} Y$ is an isomorphism of $A$-modules;
for a full description in terms of generators and relations, we refer the reader to \cite[p.\ 215]{MR0245634}.
Moreover, there is a long exact sequence of relative $K$-theory (see \cite[Chapter 15]{MR0245634})
\begin{equation}\label{eqn:long-exact-seq}
K_{1}(\mathfrak{A}) \longrightarrow K_{1}(A) \stackrel{\partial}{\longrightarrow} K_{0}(\mathfrak{A}, A)
\stackrel{\rho}{\longrightarrow} K_{0}(\mathfrak{A}) \longrightarrow K_{0}(A).
\end{equation}
The reduced norm map $\nr = \nr_{A}: A \rightarrow \zeta(A)$ is defined componentwise on the Wedderburn decomposition of $A$
and extends to matrix rings over $A$ (see \cite[\S 7D]{MR632548}); thus
it induces a map $K_{1}(A) \longrightarrow \zeta(A)^{\times}$, which we also denote by $\nr$.

Let $\mathcal C^{b} (\PMod (\mathfrak{A}))$ be the category of bounded complexes of finitely generated projective $\mathfrak{A}$-modules.
Then $K_{0}(\mathfrak{A}, A)$ identifies with the Grothendieck group whose generators are $[C^{\bullet}]$, where $C^{\bullet}$
is an object of the category $\mathcal C^{b}\tor(\PMod(\mathfrak{A}))$ of bounded complexes of finitely generated projective $\mathfrak{A}$-modules whose 
cohomology modules are $R$-torsion, and the relations are as follows: $[C^{\bullet}] = 0$ if $C^{\bullet}$ is acyclic, and
$[C_{2}^{\bullet}] = [C_{1}^{\bullet}] + [C_{3}^{\bullet}]$ for every short exact sequence
\[
0 \longrightarrow C_{1}^{\bullet} \longrightarrow C_{2}^{\bullet} \longrightarrow C_{3}^{\bullet} \longrightarrow 0
\]
in $\mathcal C^{b}\tor(\PMod(\mathfrak{A}))$ (see \cite[Chapter 2]{MR3076731} or \cite[\S 2]{MR3068893}, for example).

Let $\mathcal{D} (\mathfrak{A})$ be the derived category of $\mathfrak{A}$-modules.
A complex of $\mathfrak{A}$-modules is said to be perfect if it is
isomorphic in $\mathcal{D} (\mathfrak{A})$ to an element of $\mathcal C^b(\PMod (\mathfrak{A}))$.
We denote the full triangulated subcategory of
$\mathcal{D} (\mathfrak{A})$ comprising perfect complexes by $\mathcal{D}^{\perf} (\mathfrak{A})$,
and the full triangulated subcategory
comprising perfect complexes whose cohomology modules are $R$-torsion by $\mathcal{D}^{\perf}\tor (\mathfrak{A})$.
Then any object of $\mathcal{D}^{\perf}\tor (\mathfrak{A})$ defines an element in $K_{0}(\mathfrak{A}, A)$.

We now specialise to the situation of \S \ref{subsec:Iwasawa-algebras}.
Let $p$ be an odd prime and let $\mathcal{G} = H \rtimes \Gamma$ be a one-dimensional $p$-adic Lie group.
Let $A = \mathcal{Q}(\mathcal{G})$, $\mathfrak{A} = \Lambda(\mathcal{G})=\Z_{p}[[\mathcal{G}]]$ and $R=\Z_{p}[[\Gamma_{0}]]$,
where $\Gamma_{0}$ is an open subgroup of $\Gamma$ that is central in $\mathcal{G}$.
Then \cite[Corollary 3.8]{MR3034286}  (take $\mathcal{O}=\Z_{p}$ and $G=\mathcal{G}$ and note that
$\mathcal{O}[[G]]_{S^{*}}=\mathcal{Q}(\mathcal{G})$ since $\mathcal{G}$ is one-dimensional)
shows that the map $\partial$ in \eqref{eqn:long-exact-seq}
is surjective
(one can also give a slight modification of the proof of either \cite[Proposition 3.4]{MR2217048} or \cite[Lemma 1.5]{MR2819672});
thus the sequence
\begin{equation}\label{eqn:Iwasawa-K-sequence}
K_{1}(\Lambda(\mathcal{G})) \longrightarrow K_{1}(\mathcal{Q}(\mathcal{G})) \stackrel{\partial}{\longrightarrow}
K_{0}(\Lambda(\mathcal{G}),\mathcal{Q}(\mathcal{G})) \longrightarrow 0
\end{equation}
is exact.

\subsection{Admissible extensions and the $\mu=0$ hypothesis}\label{subsec:admiss-and-mu}

We specialise the definition of admissible $p$-adic Lie extension given in the introduction to the one-dimensional case.

\begin{definition}\label{def:one-dim-adm}
Let $p$ be an odd prime and let $K$ be a totally real number field.
An admissible one-dimensional $p$-adic Lie extension $\mathcal{L}$ of $K$ is a Galois extension $\mathcal{L}$ of $K$
such that (i) $\mathcal{L}$ is totally real,
(ii) $\mathcal{L}$ contains the cyclotomic $\Z_{p}$-extension $K_{\infty}$ of $K$, and
(iii) $[\mathcal{L} : K_{\infty}]$ is finite.
\end{definition}

Let $\mathcal{L}/K$ be an admissible one-dimensional $p$-adic Lie extension with Galois group $\mathcal{G}$.
Let $H=\Gal(\mathcal{L}/K_{\infty})$ and let $\Gamma_{K}=\Gal(K_{\infty}/K)$.
As in \S \ref{subsec:Iwasawa-algebras}, we obtain a semidirect product 
$\mathcal{G} = H \rtimes \Gamma$ where $\Gamma \leq \mathcal{G}$ and $\Gamma \simeq \Gamma_{K} \simeq \Z_{p}$, 
and we choose an open subgroup $\Gamma_{0} \leq \Gamma$ that is central in $\mathcal{G}$. 
 
Let $S_{\infty}$ be the set of archimedean places of $K$ and let $S_{p}$ be the set of places of $K$ above $p$.
Let $S_{\ram}=S_{\ram}(\mathcal{L}/K)$ be the (finite) set of places of $K$ that ramify in $\mathcal{L}/K$;
note that  $S_{p} \subseteq S_{\ram}$.
Let $S$ be a finite set of places of $K$ containing $S_{\ram} \cup S_{\infty}$.
Let $M_{S}^{\ab}(p)$ be the maximal abelian pro-$p$-extension of 
$\mathcal{L}$ unramified outside $S$ and let $X_{S}=\Gal(M_{S}^{\ab}(p)/\mathcal{L})$. 
As usual $\mathcal{G}$ acts on $X_{S}$ by $g \cdot x = \tilde{g}x\tilde{g}^{-1}$, 
where $g \in \mathcal{G}$, and $\tilde{g}$ is any lift of $g$ to $\Gal(M_{S}^{\ab}(p)/K)$. 
This action extends to a left action of $\Lambda(\mathcal{G})$ on $X_{S}$.
Since $\mathcal{L}$ is totally real, a result of Iwasawa \cite{MR0349627} shows that 
$X_{S}$ is finitely generated and torsion as a $\Lambda(\Gamma_{0})$-module.

\begin{definition}\label{def:mu=0-hypothesis}
We say that $\mathcal{L}/K$ satisfies the $\mu=0$ hypothesis if $X_{S}$ is finitely generated as a $\Z_{p}$-module.
\end{definition}

\begin{remark}\label{rmk:mu=0}
The classical Iwasawa $\mu=0$ conjecture (at $p$) is the assertion that for every number field $F$, the
Galois group of the maximal unramified abelian $p$-extension of $F_{\infty}$ is a finitely generated $\Z_{p}$-module.
This conjecture has been proven by Ferrero and Washington \cite{MR528968} in the case that $F/\Q$ is abelian.
Now let $\mathcal{L}/K$ be an admissible one-dimensional $p$-adic Lie extension
and let $L$ be a finite Galois extension of $K$ such that $L_{\infty}=\mathcal{L}$.
Let $E$ be an intermediate field of $L/K$ such that $L/E$ is of $p$-power degree.
Then \cite[Theorem 11.3.8]{MR2392026} says that $\mathcal{L}/K$
satisfies the $\mu=0$ hypothesis if and only if $E_{\infty}/K$ does.
Finally, let $\zeta_{p}$ denote a primitive $p$th root of unity.
Then by \cite[Corollary 11.4.4]{MR2392026} Iwasawa's conjecture for $E(\zeta_{p})$ implies the $\mu=0$ hypothesis for 
$E_{\infty}(\zeta_{p})^{+}/K$ and thus for $E_{\infty}/K$ and $\mathcal{L}/K$.
\end{remark}

\subsection{A reformulation of the equivariant Iwasawa main conjecture} \label{subsec:EIMC-reformulation}
We give a slight reformulation of the equivariant Iwasawa main conjecture for totally real fields.

Let $\mathcal{L}/K$ be an admissible one-dimensional $p$-adic Lie extension.
We assume the notation and setting of \S \ref{subsec:admiss-and-mu}.
However, we do \emph{not} assume the $\mu=0$ hypothesis for $\mathcal{L}/K$ except where explicitly stated.
Let $C_{S}^{\bullet}(\mathcal{L}/K)$ be the canonical complex
\[
    C_{S}^{\bullet}(\mathcal{L}/K) := R\Hom(R\Gamma_{\et}(\Spec(\mathcal{O}_{\mathcal{L},S}), \Q_{p} / \Z_{p}), \Q_{p} / \Z_{p}).
\]
Here, $\mathcal{O}_{\mathcal{L},S}$ denotes the ring of integers $\mathcal{O}_{\mathcal{L}}$ in $\mathcal{L}$ localised at all primes above those in $S$
and
$\Q_{p} / \Z_{p}$ denotes the constant sheaf of the abelian group $\Q_{p} / \Z_{p}$ on the \'{e}tale site
of $\Spec(\mathcal{O}_{\mathcal{L},S})$.
The only non-trivial cohomology groups occur in degree $-1$ and $0$ and we have
\[
H^{-1}(C_{S}^{\bullet}(\mathcal{L}/K)) \simeq X_{S}, \qquad H^{0}(C_{S}^{\bullet}(\mathcal{L}/K)) \simeq \Z_{p}.
\]
It follows from \cite[Proposition 1.6.5]{MR2276851} that $C_{S}^{\bullet}(\mathcal{L}/K)$ belongs to $\mathcal{D}^{\perf}\tor(\Lambda(\mathcal{G}))$.
In particular, $C_{S}^{\bullet}(\mathcal{L}/K)$ defines a class $[C_{S}^{\bullet}(\mathcal{L}/K)]$ in $K_{0}(\Lambda(\mathcal{G}), \mathcal{Q}(\mathcal{G}))$.
Note that $C_{S}^{\bullet}(\mathcal{L}/K)$ and the complex used
by Ritter and Weiss (as constructed in \cite{MR2114937}) become isomorphic in $\mathcal{D}(\Lambda(\mathcal{G}))$ by
\cite[Theorem 2.4]{MR3072281} (see also \cite{MR3068897} for more on this topic).
Hence it makes no essential difference which of these complexes we use.

Recall the notation and hypotheses of \S \ref{subsec:idempotents} and \S \ref{subsec:sufficiently-large}.
In particular, $F$ is a sufficiently large finite extension of $\Q_{p}$.
Let $\chi_{\mathrm{cyc}}$ be the $p$-adic cyclotomic character
\[
\chi_{\mathrm{cyc}}: \Gal(\mathcal{L}(\zeta_{p})/K) \longrightarrow \Z_{p}^{\times},
\]
defined by $\sigma(\zeta) = \zeta^{\chi_{\mathrm{cyc}}(\sigma)}$ for any $\sigma \in \Gal(\mathcal{L}(\zeta_{p})/K)$ and any $p$-power root of unity $\zeta$.
Let $\omega$ and $\kappa$ denote the composition of $\chi_{\mathrm{cyc}}$ with the projections onto the first and second factors of the canonical decomposition $\Z_{p}^{\times} = \mu_{p-1} \times (1+p\Z_{p})$, respectively;
thus $\omega$ is the Teichm\"{u}ller character.
We note that $\kappa$ factors through $\Gamma_{K}$ 
(and thus also through $\mathcal{G}$) and by abuse of notation we also 
use $\kappa$ to denote the associated maps with these domains.
We put $u := \kappa(\gamma_{K})$.
For $r \in \N_{0}$ divisible by $p-1$ 
(or more generally divisible by the degree $[\mathcal{L}(\zeta_{p}) : \mathcal{L}]$), 
up to the natural inclusion map of codomains, 
we have $\chi_{\mathrm{cyc}}^{r}=\kappa^{r}$. 

Following \cite[Proposition 6]{MR2114937}, we define a map
\[
j_{\chi}: \zeta(\mathcal{Q}^{F} (\mathcal{G})) \twoheadrightarrow \zeta(\mathcal{Q}^{F} (\mathcal{G})e_{\chi}) \simeq \mathcal{Q}^{F}(\Gamma_{\chi}) \rightarrow  \mathcal{Q}^{F}(\Gamma_{K}),
\]
where the last arrow is induced by mapping $\gamma_{\chi}$ to $\gamma_{K}^{w_{\chi}}$.
It follows from op.\ cit.\ that $j_{\chi}$ is independent of the choice of $\gamma_{K}$ and that 
for every matrix $\Theta \in M_{n \times n} (\mathcal{Q}(\mathcal{G}))$ we have
\begin{equation*} \label{eqn:jchi-det}
j_{\chi} (\nr(\Theta)) = \mathrm{det}_{\mathcal{Q}^{F}(\Gamma_{K})} (\Theta \mid \Hom_{F[H]}(V_{\chi},  \mathcal{Q}^{F}(\mathcal{G})^n)).
\end{equation*}
Here, $\Theta$ acts on $f \in \Hom_{F[H]}(V_{\chi},  \mathcal{Q}^{F}(\mathcal{G})^{n})$ via right multiplication,
and $\gamma_{K}$ acts on the left via $(\gamma_{K} f)(v) = \gamma \cdot f(\gamma^{-1} v)$ for all $v \in V_{\chi}$,
where $\gamma$ is the unique lift of $\gamma_{K}$ to $\Gamma \leq \mathcal{G}$.
Hence the map
\begin{eqnarray*}
\Det(~)(\chi): K_{1}(\mathcal{Q}(\mathcal{G})) & \rightarrow & \mathcal{Q}^{F}(\Gamma_{K})^{\times} \\
 {[P,\alpha]}& \mapsto & \mathrm{det}_{\mathcal{Q}^{F}(\Gamma_{K})} (\alpha \mid \Hom_{F[H]}(V_{\chi},  F \otimes_{\Q_{p}} P)),
\end{eqnarray*}
where $P$ is a projective $\mathcal{Q}(\mathcal{G})$-module and $\alpha$ a $\mathcal{Q}(\mathcal{G})$-automorphism of $P$, is just $j_{\chi} \circ \nr$ (see \cite[\S 3, p.558]{MR2114937}).
If $\rho$ is a character of $\mathcal{G}$ of type $W$ (i.e.~$\res^{\mathcal{G}}_H \rho = 1$)
then we denote by
$\rho^{\sharp}$ the automorphism of the field $\mathcal{Q}^{c}(\Gamma_{K})$ induced by
$\rho^{\sharp}(\gamma_{K}) = \rho(\gamma_{K}) \gamma_{K}$. 
Moreover, we denote the additive group generated by all $\Q_{p}^{c}$-valued
characters of $\mathcal{G}$ with open kernel by $R_p(\mathcal{G})$; finally, 
$\Hom^{\ast}(R_{p}( \mathcal{G}), \mathcal{Q}^{c}(\Gamma_{K})^{\times})$
is the group of all homomorphisms 
$f: R_p(\mathcal{G}) \rightarrow \mathcal{Q}^{c}(\Gamma_{K})\mal$ satisfying
\[
\begin{array}{ll}
f(\chi \otimes \rho) = \rho^{\sharp}(f(\chi)) & \mbox{ for all characters } \rho \mbox{ of type } W \mbox{ and}\\
f({}^{\sigma}\chi) = \sigma(f(\chi)) & \mbox{ for all Galois automorphisms } \sigma \in \Gal(\Q_{p}^{c}/\Q_{p}).
\end{array}
\]
By \cite[Proof of Theorem 8]{MR2114937} we have an isomorphism
\begin{eqnarray*}
\zeta(\mathcal{Q}(\mathcal{G}))\mal & \simeq & 
\Hom^{\ast}(R_{p}(\mathcal{G}), \mathcal{Q}^{c}(\Gamma_{K})^{\times})\\
x & \mapsto & [\chi \mapsto j_{\chi}(x)].
\end{eqnarray*}
By \cite[Theorem 8]{MR2114937} the map $\Theta \mapsto [\chi \mapsto \Det(\Theta)(\chi)]$
defines a homomorphism
\[
\Det: K_{1}(\mathcal{Q}(\mathcal{G})) \rightarrow \Hom^{\ast}(R_p(\mathcal{G}), \mathcal{Q}^{c}(\Gamma_{K})\mal)
\]
such that we obtain a commutative triangle
\begin{equation} \label{eqn:Det_triangle}
\xymatrix{
& K_{1}(\mathcal{Q}(\mathcal{G})) \ar[dl]_{\nr} \ar[dr]^{\Det} &\\
{\zeta(\mathcal{Q}(\mathcal{G}))^{\times}} \ar[rr]^{\sim} & & {\Hom^{\ast}(R_p( \mathcal{G}), \mathcal{Q}^{c}(\Gamma_{K})^{\times})}.}
\end{equation}

Each topological generator $\gamma_{K}$ of  $\Gamma_{K}$ permits the definition of a 
power series $G_{\chi,S}(T) \in \Q_{p}^{c} \otimes_{\Q_{p}} Quot(\Z_{p}[[T]])$ 
by starting out from the Deligne-Ribet power series for linear characters of open subgroups 
of $\mathcal{G}$ (see \cite{MR579702}; also see \cite{ MR525346, MR524276}) 
and then extending to the general case by using Brauer induction (see \cite{MR692344}).
One then has an equality
\[
L_{p,S}(1-s,\chi) = \frac{G_{\chi,S}(u^s-1)}{H_{\chi}(u^s-1)},
\]
where $L_{p,S}(s,\chi)$ denotes the `$S$-truncated $p$-adic Artin $L$-function' attached to $\chi$ constructed by Greenberg \cite{MR692344},
and where, for irreducible $\chi$, one has
\[
H_{\chi}(T) = \left\{\begin{array}{ll} \chi(\gamma_{K})(1+T)-1 & \mbox{ if }  H \subseteq \ker \chi\\
1 & \mbox{ otherwise.}  \end{array}\right.
\]
Now \cite[Proposition 11]{MR2114937} implies that
\[
L_{K,S} : \chi \mapsto \frac{G_{\chi,S}(\gamma_{K}-1)}{H_{\chi}(\gamma_{K}-1)}
\]
is independent of the topological generator $\gamma_{K}$ and lies in 
$\Hom^{\ast}(R_{p}( \mathcal{G}), \mathcal{Q}^{c}(\Gamma_{K})^{\times})$.
Diagram \eqref{eqn:Det_triangle} implies that there is a unique element 
$\Phi_{S} = \Phi_{S}(\mathcal{L}/K) \in \zeta(\mathcal{Q}(\mathcal{G}))^{\times}$
such that
\[
j_{\chi}(\Phi_{S}) = L_{K,S}(\chi)
\]
for every $\chi \in \Irr_{\Q_{p}^{c}}(\mathcal{G})$.
It is now clear that the following is a reformulation of the EIMC without its uniqueness statement.

\begin{conj}[EIMC]\label{conj:EIMC}
There exists $\zeta_{S} \in K_{1}(\mathcal{Q}(\mathcal{G}))$ such that $\partial(\zeta_{S}) = -[C_{S}^{\bullet}(\mathcal{L}/K)]$
and $\nr(\zeta_{S}) = \Phi_{S}$.
\end{conj}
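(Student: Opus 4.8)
In full generality Conjecture~\ref{conj:EIMC} remains open; the plan is to prove it whenever $\Lambda(\mathcal{G})$ is $N$-hybrid for some non-trivial finite normal subgroup $N$ of $\mathcal{G}$ and the sub-extension $\mathcal{L}^{N}/K$ satisfies the $\mu=0$ hypothesis. As the examples of \S\ref{sec:hybrid-Iwasawa-alg} show, there are many such $\mathcal{G}$ for which $\mu=0$ is known for $\mathcal{L}^{N}/K$ but not for $\mathcal{L}/K$ itself, and this yields new unconditional cases.

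The idea is to split every object entering the statement along the central idempotent $e_{N}$. Because $\Lambda(\mathcal{G})$ is $N$-hybrid there is a ring decomposition $\Lambda(\mathcal{G})\simeq\Lambda(\mathcal{G})e_{N}\times\Lambda(\mathcal{G})(1-e_{N})\simeq\Lambda(\mathcal{G}/N)\times\mathfrak{M}$, where $\mathfrak{M}:=\Lambda(\mathcal{G})(1-e_{N})$ is by Corollary~\ref{cor:hybrid-implies-nr-surjective} a maximal $\Z_{p}[[\Gamma_{0}]]$-order, in fact a product of matrix rings over integrally closed commutative local domains; this is compatible with the analogous decomposition of $\mathcal{Q}(\mathcal{G})$. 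I would first record that each of $K_{1}(\mathcal{Q}(\mathcal{G}))$, $K_{0}(\Lambda(\mathcal{G}),\mathcal{Q}(\mathcal{G}))$ and $\zeta(\mathcal{Q}(\mathcal{G}))^{\times}$, the boundary map $\partial$ of \eqref{eqn:Iwasawa-K-sequence}, the reduced norm $\nr$ and the map $\Det$ of \eqref{eqn:Det_triangle} split accordingly into an ``$e_{N}$-part'' and a ``$(1-e_{N})$-part''. Next I would show that the canonical complex decomposes too: functoriality of \'{e}tale cohomology under $\mathcal{G}\twoheadrightarrow\mathcal{G}/N$ (co-descent along $N=\Gal(\mathcal{L}/\mathcal{L}^{N})$) identifies $e_{N}\cdot C_{S}^{\bullet}(\mathcal{L}/K)$ with $C_{S}^{\bullet}(\mathcal{L}^{N}/K)$ in $\mathcal{D}^{\perf}\tor(\Lambda(\mathcal{G}/N))$, where one uses that $S\supseteq S_{\ram}(\mathcal{L}^{N}/K)\cup S_{\infty}$; similarly, since the $p$-adic $L$-function is built from Deligne--Ribet power series and Brauer induction, it is compatible with inflation of characters, so $e_{N}\cdot\Phi_{S}(\mathcal{L}/K)$ corresponds to $\Phi_{S}(\mathcal{L}^{N}/K)$ under $\zeta(\mathcal{Q}(\mathcal{G}))e_{N}\simeq\zeta(\mathcal{Q}(\mathcal{G}/N))$.

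Granting this, the two components are handled separately. On the $\mathfrak{M}$-component I would invoke the maximal-order variant of the EIMC of Ritter and Weiss \cite{MR2114937}, in the form reinterpreted in \S\ref{sec:EIMC}: it holds with \emph{no} $\mu=0$ hypothesis --- their proof uses Brauer induction to reduce to linear characters, where it is Wiles's theorem \cite{MR1053488} --- and it provides $\zeta_{S}^{(1-e_{N})}\in K_{1}(\mathcal{Q}(\mathcal{G})(1-e_{N}))$ with $\partial(\zeta_{S}^{(1-e_{N})})=-(1-e_{N})[C_{S}^{\bullet}(\mathcal{L}/K)]$ and $\nr(\zeta_{S}^{(1-e_{N})})=(1-e_{N})\Phi_{S}$; on this component $\nr$ is even an isomorphism onto $\zeta(\mathcal{Q}(\mathcal{G})(1-e_{N}))^{\times}$, so this element is unique. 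On the $\Lambda(\mathcal{G}/N)$-component the required assertion is exactly Conjecture~\ref{conj:EIMC} for the admissible one-dimensional extension $\mathcal{L}^{N}/K$, which holds by \cite{MR2813337} (equivalently by \cite{MR3091976} in the framework of \cite{MR2217048}) because $\mathcal{L}^{N}/K$ satisfies $\mu=0$; this yields $\zeta_{S}^{(e_{N})}\in K_{1}(\mathcal{Q}(\mathcal{G}/N))$ with the two analogous properties. Setting $\zeta_{S}:=(\zeta_{S}^{(e_{N})},\zeta_{S}^{(1-e_{N})})$ under $K_{1}(\mathcal{Q}(\mathcal{G}))\simeq K_{1}(\mathcal{Q}(\mathcal{G}/N))\times K_{1}(\mathcal{Q}(\mathcal{G})(1-e_{N}))$ then gives $\partial(\zeta_{S})=-[C_{S}^{\bullet}(\mathcal{L}/K)]$ and $\nr(\zeta_{S})=\Phi_{S}$, as required.

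The main obstacle is the verification that the $e_{N}$-splitting is genuinely compatible with \emph{all} the data of the EIMC --- above all that $e_{N}\cdot[C_{S}^{\bullet}(\mathcal{L}/K)]=[C_{S}^{\bullet}(\mathcal{L}^{N}/K)]$ in the relative $K$-group (a base-change statement for the canonical complex along $\mathcal{G}\to\mathcal{G}/N$) and that $L_{K,S}$, hence $\Phi_{S}$, respects inflation through the maps $j_{\chi}$ of \S\ref{subsec:EIMC-reformulation}, which comes down to $L_{p,S}(s,\infl\,\chi)=L_{p,S}(s,\chi)$ together with the compatibility of the $j_{\chi}$. These functoriality properties of the EIMC, and the precise reinterpretation of the Ritter--Weiss maximal-order result as a statement about the $(1-e_{N})$-component, are exactly what \S\ref{sec:EIMC} is designed to supply; once they are in hand, the argument above is a formal verification using the product decompositions.
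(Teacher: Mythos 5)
The statement is a conjecture; the paper does not prove it unconditionally, and neither do you --- you correctly flag this at the outset and then sketch a proof of the conditional result which is precisely the paper's Theorem~\ref{thm:EIMC-break-down} combined with Theorem~\ref{thm:EIMC-with-mu}. Your sketch follows the paper's argument essentially step for step: the $e_{N}$-decomposition of $\Lambda(\mathcal{G})$, of $K_{0}(\Lambda(\mathcal{G}),\mathcal{Q}(\mathcal{G}))$, and of $\zeta(\mathcal{Q}(\mathcal{G}))^{\times}$; identification of the $e_{N}$-part of the data with the data for $\mathcal{L}^{N}/K$, which is exactly the content of Proposition~\ref{prop:funtorialities}~(i); and construction of the required element on the $(1-e_{N})$-part from the Ritter--Weiss maximal-order variant (Theorem~\ref{thm:EIMC-MaxOrd}, packaged as Corollary~\ref{cor:EIMC-MaxOrd}), where surjectivity of the reduced norm and the vanishing of $SK_{1}$ on that component are guaranteed by Corollary~\ref{cor:hybrid-implies-nr-surjective}. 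One small bookkeeping correction: the maximality of $\Lambda(\mathcal{G})(1-e_{N})$ is part of Definition~\ref{def:max-comm-hybrid} of $N$-hybrid, not a consequence of Corollary~\ref{cor:hybrid-implies-nr-surjective}; what that corollary supplies is the sharper structural fact that $\Lambda(\mathcal{G})(1-e_{N})$ is a product of matrix rings over commutative local domains, which is what you actually need for the surjectivity of $\nr$ and uniqueness on the $(1-e_{N})$-component. The compatibility points you list as the ``main obstacle'' are indeed exactly what Proposition~\ref{prop:funtorialities} is quoted from \cite{MR2114937} and \cite{MR2276851} to supply, so your reconstruction of the paper's strategy is faithful.
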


It can be shown that the truth of Conjecture \ref{conj:EIMC} is independent of the choice of $S$, 
provided $S$ is finite and contains $S_{\ram} \cup S_{\infty}$.
The following theorem has been shown independently by Ritter and Weiss \cite{MR2813337} and Kakde \cite{MR3091976}.

\begin{theorem}\label{thm:EIMC-with-mu}
If $\mathcal{L}/K$ satisfies the $\mu=0$ hypothesis then the EIMC holds for $\mathcal{L}/K$.
\end{theorem}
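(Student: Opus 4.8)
The plan is to follow the strategy of Ritter and Weiss \cite{MR2813337} and, independently, of Kakde \cite{MR3091976} (the latter resting on a reduction of Burns and Kato; see \cite{MR3294653}): reduce the assertion to the abelian case, where it follows from Wiles' main conjecture \cite{MR1053488}, and bridge the gap by an explicit calculus of congruences in algebraic $K$-theory. First I would use the surjectivity of $\partial$ in \eqref{eqn:Iwasawa-K-sequence} to choose some $\zeta_{S}' \in K_{1}(\mathcal{Q}(\mathcal{G}))$ with $\partial(\zeta_{S}') = -[C_{S}^{\bullet}(\mathcal{L}/K)]$, which already settles the first half of Conjecture \ref{conj:EIMC}. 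Since two elements of $K_{1}(\mathcal{Q}(\mathcal{G}))$ with equal boundary differ by the image of $K_{1}(\Lambda(\mathcal{G}))$, it then remains to show that $\Phi_{S} \cdot \nr(\zeta_{S}')^{-1}$ lies in $\nr(K_{1}(\Lambda(\mathcal{G})))$ inside $\zeta(\mathcal{Q}(\mathcal{G}))^{\times}$. Via the isomorphism $\zeta(\mathcal{Q}(\mathcal{G}))^{\times} \simeq \Hom^{\ast}(R_{p}(\mathcal{G}), \mathcal{Q}^{c}(\Gamma_{K})^{\times})$ and the commutative triangle \eqref{eqn:Det_triangle}, this becomes the question of which homomorphisms $f \colon R_{p}(\mathcal{G}) \to \mathcal{Q}^{c}(\Gamma_{K})^{\times}$ are of the form $\Det(\Theta)$ with $\Theta \in K_{1}(\Lambda(\mathcal{G}))$.

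The technical heart of the argument is an explicit description of this image. One introduces an integral logarithm on $K_{1}(\Lambda(\mathcal{G}))$ of Oliver--Taylor type, adapted to one-dimensional Iwasawa algebras, and uses it to cut the image down to a finite family of abelian ``pieces'': running over (conjugacy classes of) subgroups $U \leq \mathcal{G}$ that produce the relevant abelian subquotients, one has the well-understood commutative theory over each $\Lambda(U)$ together with restriction and transfer maps, and a tuple of such local data glues to an element of $K_{1}(\Lambda(\mathcal{G}))$ exactly when it satisfies a prescribed system of norm-compatibilities and congruences. It is precisely here that the $\mu = 0$ hypothesis is essential: it forces $X_{S}$, and hence all the Iwasawa modules attached to the intermediate totally real extensions, to be finitely generated over $\Z_{p}$, which keeps the logarithm integral and the congruence calculus finite and manageable.

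Granting this description, I would supply the two arithmetic inputs. The component of $\Phi_{S}$ (equivalently of $L_{K,S}$) at each abelian piece is, by its construction from the Deligne--Ribet power series \cite{MR579702}, the corresponding abelian $S$-truncated $p$-adic $L$-function; matching it with the algebraic side piece by piece is the one-dimensional main conjecture for totally real fields of Wiles \cite{MR1053488}, while the congruences among these $L$-functions demanded by the previous step are the Deligne--Ribet congruences. Combining these verifies that $\Phi_{S} \cdot \nr(\zeta_{S}')^{-1}$ lies in $\nr(K_{1}(\Lambda(\mathcal{G})))$, and correcting $\zeta_{S}'$ by a suitable preimage produces the element $\zeta_{S}$ required by Conjecture \ref{conj:EIMC}.

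The hard part will be the $K$-theoretic step, namely proving that the image of $\Det$ (equivalently of $\nr$) is cut out precisely by the explicit congruences between abelian pieces; everything else is either formal or a citation. This is the substance of the series of papers of Ritter and Weiss culminating in \cite{MR2813337} and of the independent work of Kakde \cite{MR3091976}, to which I would refer for the details omitted here.
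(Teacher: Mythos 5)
The paper does not prove this theorem itself: it simply cites the independent works of Ritter--Weiss \cite{MR2813337} and Kakde \cite{MR3091976} as having established it. Your sketch accurately summarises the reduction strategy used in those references (correct the boundary with the exactness of \eqref{eqn:Iwasawa-K-sequence}, characterise $\nr(K_{1}(\Lambda(\mathcal{G})))$ via an integral logarithm and congruences between abelian subquotients, then feed in Wiles' main conjecture \cite{MR1053488} and the Deligne--Ribet congruences \cite{MR579702}) and, like the paper, ultimately defers to the literature for the substantive $K$-theoretic step, so the two treatments agree.
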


\begin{corollary}\label{cor:EIMC-unconditional}
Let $\mathcal{P}$ be a Sylow $p$-subgroup of $\mathcal{G}$.
If $\mathcal{L}^{\mathcal{P}}/\Q$ is abelian then $\mathcal{P}$ is normal in $\mathcal{G}$ (and thus is unique),
and the EIMC holds for $\mathcal{L}/K$.
\end{corollary}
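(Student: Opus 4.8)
The plan is to reduce to Theorem~\ref{thm:EIMC-with-mu}: I would show that, under the stated hypothesis, $\mathcal{L}/K$ itself satisfies the $\mu=0$ hypothesis. The Sylow subgroup serves only to exhibit a convenient intermediate base field over which Ferrero--Washington applies.

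The normality claim is purely group-theoretic. If $\mathcal{L}^{\mathcal{P}}/\Q$ is abelian then it is Galois over $\Q$, hence over $K$, so $\mathcal{P}=\Gal(\mathcal{L}/\mathcal{L}^{\mathcal{P}})$ is normal in $\mathcal{G}=\Gal(\mathcal{L}/K)$; a normal Sylow $p$-subgroup coincides with all of its conjugates and so is the unique one. In particular $E:=\mathcal{L}^{\mathcal{P}}$ is well defined, and since $\Gamma$ is pro-$p$ we have $\Gamma\leq\mathcal{P}$ and hence $\Gamma_{0}\leq\mathcal{P}$.

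Next I would check that $\mathcal{L}/E$ is again an admissible one-dimensional $p$-adic Lie extension, now with pro-$p$ Galois group $\mathcal{P}$. Indeed, $E$ is totally real as a subfield of $\mathcal{L}$; from $K_{\infty}\subseteq\mathcal{L}$ and $K\subseteq E$ one gets $E_{\infty}=EK_{\infty}\subseteq\mathcal{L}$; and $\Gal(\mathcal{L}/E_{\infty})=\Gal(\mathcal{L}/E)\cap\Gal(\mathcal{L}/K_{\infty})=\mathcal{P}\cap H$ is finite. Since $\mathcal{P}$ is pro-$p$, the finite layer $L':=\mathcal{L}^{\Gamma_{0}}$ has $[L':E]=[\mathcal{P}:\Gamma_{0}]$ equal to a power of $p$ and satisfies $L'_{\infty}=\mathcal{L}$. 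Now $E/\Q$ is abelian, hence so is $E(\zeta_{p})/\Q$, so the classical Iwasawa $\mu=0$ conjecture at $p$ holds for $E(\zeta_{p})$ by Ferrero--Washington~\cite{MR528968}. Feeding this into the implications recalled in Remark~\ref{rmk:mu=0}, applied with base field $E$ and with the intermediate field taken to be $E$ itself (legitimate precisely because $L'/E$ has $p$-power degree), shows that $\mathcal{L}/E$ satisfies the $\mu=0$ hypothesis.

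Finally, the $\mu=0$ hypothesis is intrinsic to the $\Z_{p}$-module $X_{S}=\Gal(M_{S}^{\ab}(p)/\mathcal{L})$: both $M_{S}^{\ab}(p)$ and $X_{S}$ depend only on $\mathcal{L}$ and on the set of places of $\mathcal{L}$ being avoided, not on whether that set is described via places of $K$ or of $E$. Hence the $\mu=0$ hypothesis for $\mathcal{L}/E$ and for $\mathcal{L}/K$ coincide, so $\mathcal{L}/K$ satisfies it, and Theorem~\ref{thm:EIMC-with-mu} then gives the EIMC for $\mathcal{L}/K$. (Alternatively one could retain the conclusion EIMC for $\mathcal{L}/E$ and descend along the degree-prime-to-$p$ extension $E/K$ using the functorial properties of the EIMC, but the base-independence of the $\mu=0$ hypothesis makes this unnecessary.) The only genuine work is the bookkeeping in the previous paragraph---verifying that the results behind Remark~\ref{rmk:mu=0} (ultimately \cite[Theorem~11.3.8, Corollary~11.4.4]{MR2392026}) apply verbatim with $E$ in place of the base field and with the chosen intermediate field---together with the standard independence of the $\mu=0$ hypothesis from the auxiliary finite set $S$; everything else is formal.
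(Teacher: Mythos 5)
Your argument is correct and follows essentially the same path as the paper: reduce to Theorem~\ref{thm:EIMC-with-mu} by deducing the $\mu=0$ hypothesis from Ferrero--Washington via the Neukirch--Schmidt--Wingberg results packaged in Remark~\ref{rmk:mu=0}. The only cosmetic difference is routing: the paper applies Remark~\ref{rmk:mu=0} directly over the base field $K$ (with $E=\mathcal{L}^{\mathcal{P}}$ serving as the intermediate field of the finite layer $L$, so that $L/E$ has $p$-power degree), obtaining $\mu=0$ for $\mathcal{L}/K$ in one step, whereas you temporarily change base to $E$, verify $\mathcal{L}/E$ is admissible, obtain $\mu=0$ for $\mathcal{L}/E$, and then invoke base-independence of $\mu=0$; both versions rest on the same two citations from \cite{MR2392026} and amount to the same computation.
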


\begin{proof}
The first claim is clear.
Let $E=\mathcal{L}^{\mathcal{P}}$ and let $L$ be a finite Galois extension of $K$ such that $L_{\infty}=\mathcal{L}$.
Then $L/E$ is a finite Galois extension of $p$-power degree.
Moreover, $E/\Q$ is a (finite) abelian extension by hypothesis and so $E(\zeta_{p})/\Q$ is also abelian.
Therefore the $\mu=0$ hypothesis for $\mathcal{L}/K$ holds by the results discussed in Remark \ref{rmk:mu=0}.
\end{proof}

We shall also consider the EIMC with its uniqueness statement.

\begin{conj}[EIMC with uniqueness]\label{conj:EIMC-unique}
There exists a unique $\zeta_{S} \in K_{1}(\mathcal{Q}(\mathcal{G}))$ such that $\partial(\zeta_{S}) = -[C_{S}^{\bullet}(\mathcal{L}/K)]$
and $\nr(\zeta_{S}) = \Phi_{S}$.
\end{conj}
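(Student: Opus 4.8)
The plan is to deduce Conjecture~\ref{conj:EIMC-unique} from the existence statement of Conjecture~\ref{conj:EIMC} --- which, for the extensions treated here, is the main theorem of the paper, and which holds in general under the $\mu=0$ hypothesis by Theorem~\ref{thm:EIMC-with-mu} --- by pinning down the indeterminacy in the choice of $\zeta_{S}$. Granting that some admissible $\zeta_{S}$ exists, any other admissible $\zeta_{S}'$ satisfies $\zeta_{S}' = \zeta_{S}\cdot u$ with $u \in K_{1}(\mathcal{Q}(\mathcal{G}))$, $\partial(u)=0$ and $\nr(u)=1$. By exactness of \eqref{eqn:Iwasawa-K-sequence} the condition $\partial(u)=0$ means precisely that $u$ lies in the image of $K_{1}(\Lambda(\mathcal{G}))$; since the reduced norm on the order $\Lambda(\mathcal{G})$ is by definition computed in $\mathcal{Q}(\mathcal{G})$, adding the condition $\nr(u)=1$ says exactly that $u$ lies in the image of $SK_{1}(\Lambda(\mathcal{G}))$ in $K_{1}(\mathcal{Q}(\mathcal{G}))$. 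Hence, granting Conjecture~\ref{conj:EIMC}, proving Conjecture~\ref{conj:EIMC-unique} amounts to showing that $SK_{1}(\Lambda(\mathcal{G}))$ maps trivially into $K_{1}(\mathcal{Q}(\mathcal{G}))$.

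First I would dispose of this whenever $\Lambda(\mathcal{G})$ is $N$-hybrid. In that case $\Lambda(\mathcal{G}) \simeq \Lambda(\mathcal{G}/N) \oplus \Lambda(\mathcal{G})(1-e_{N})$, and by Corollary~\ref{cor:hybrid-implies-nr-surjective} the second summand is a finite product of matrix rings over integrally closed commutative local rings. Since $SK_{1}$ of a commutative local ring vanishes (the determinant identifies $K_{1}$ with the unit group) and $SK_{1}(M_{n}(R)) = SK_{1}(R)$, additivity of $SK_{1}$ over finite products gives $SK_{1}(\Lambda(\mathcal{G})) = SK_{1}(\Lambda(\mathcal{G}/N))$ compatibly with the maps into the respective total rings of fractions. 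Thus the problem descends to $\mathcal{G}/N$, and it suffices to know Conjecture~\ref{conj:EIMC-unique} for $\mathcal{G}/N$; in particular, if $\mathcal{G}/N$ is abelian then $\Lambda(\mathcal{G}/N)$ is commutative, $SK_{1}(\Lambda(\mathcal{G}/N))=0$, and uniqueness holds outright. Together with the functorial behaviour of the EIMC under passage from $\mathcal{G}$ to $\mathcal{G}/N$, which is the mechanism behind the main theorem, this shows that the EIMC \emph{with} uniqueness propagates along hybrid reductions in the same way as the plain EIMC.

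The step I expect to be the main obstacle is the residual case in which, after exhausting the available hybrid reductions, one is left with a genuinely non-abelian quotient for which no uniqueness result is yet known. One then needs either an inductive descent through a longer tower of hybrid subquotients terminating at an abelian (or otherwise settled) base, or a direct proof that the image of $SK_{1}(\Lambda(\mathcal{G}))$ in $K_{1}(\mathcal{Q}(\mathcal{G}))$ vanishes --- equivalently, that each skewfield Wedderburn component $D$ of $\mathcal{Q}(\mathcal{G})$ has $SK_{1}(D)=0$. This is to be expected, since the centre of such a $D$ is the fraction field of a complete normal local $\Z_{p}$-algebra of Krull dimension two with finite residue field of characteristic $p$, but it is not known in general --- which is exactly why uniqueness is recorded here only as a conjecture, the goal being to confirm it unconditionally precisely in the cases where the reduction above terminates successfully.
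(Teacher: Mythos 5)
This item is labelled \emph{Conjecture} in the paper, not Theorem or Proposition, and the paper does not contain a proof of it in general; so there is no ``paper's own proof'' for you to match. You have in fact recognised this --- your closing paragraph correctly observes that the indeterminacy cannot at present be killed in general --- so your write-up is best read not as a proof but as an explanation of the mechanism behind the cases that \emph{are} settled (Theorem \ref{thm:EIMC-for-p-not-dividing-ordH}, Theorem \ref{thm:EIMC-break-down}(ii),(iv), Theorem \ref{thm:EIMC-start-with-num-fields}(ii),(iv), Corollary \ref{cor:EIMC-Frobenius}(i)). With that understanding, two remarks.

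First, you take a slightly longer route than the paper does. In Remark \ref{rmk:SK1} the paper observes directly that any two admissible $\zeta_{S}$ differ by an element of $SK_{1}(\mathcal{Q}(\mathcal{G})) := \ker(\nr\colon K_{1}(\mathcal{Q}(\mathcal{G})) \to \zeta(\mathcal{Q}(\mathcal{G}))^{\times})$, since the condition $\nr(u)=1$ alone puts $u$ there; consequently $SK_{1}(\mathcal{Q}(\mathcal{G}))=0$ (which holds whenever no skewfields appear in the Wedderburn decomposition, in particular whenever $p\nmid|\mathcal{G}'|$ by Corollary \ref{cor:no-skewfields}) already forces uniqueness, with no need to use $\partial(u)=0$ or to pass through $K_{1}(\Lambda(\mathcal{G}))$. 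Your refinement --- that $u$ in fact lies in the image of $SK_{1}(\Lambda(\mathcal{G}))$ (meaning $\ker(\nr\circ\iota_{*})$ where $\iota_{*}\colon K_{1}(\Lambda(\mathcal{G}))\to K_{1}(\mathcal{Q}(\mathcal{G}))$), using exactness of \eqref{eqn:Iwasawa-K-sequence} --- is correct and gives a sharper description of the indeterminacy, but it buys nothing here because the only available tool for controlling it is still the cruder sufficient condition $SK_{1}(\mathcal{Q}(\mathcal{G}))=0$. Your hybrid reduction step does match Theorem \ref{thm:EIMC-break-down}(ii): the paper likewise uses Corollary \ref{cor:hybrid-implies-nr-surjective} to conclude that $SK_{1}((1-e_{N})\mathcal{Q}(\mathcal{G}))$ vanishes and thereby reduces uniqueness to the quotient $\mathcal{G}/N$.

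Second, your final ``equivalently'' is not correct as stated. Vanishing of $SK_{1}(D)$ for every skewfield Wedderburn component $D$ is precisely $SK_{1}(\mathcal{Q}(\mathcal{G}))=0$. This certainly implies that the image of $SK_{1}(\Lambda(\mathcal{G}))$ in $K_{1}(\mathcal{Q}(\mathcal{G}))$ is trivial (the latter image lands inside $SK_{1}(\mathcal{Q}(\mathcal{G}))$ by construction), but the converse would require knowing that every element of $SK_{1}(\mathcal{Q}(\mathcal{G}))$ lies in $\ker(\partial)$, i.e.\ in the image of $K_{1}(\Lambda(\mathcal{G}))$, which is not asserted anywhere in the paper. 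So the two conditions are sufficient-and-potentially-strictly-stronger versus necessary, not equivalent. This does not damage the overall structure of your argument --- everything you deduce uses only the sufficiency direction --- but the wording should be corrected. The paper itself sidesteps the issue entirely by appealing, in Remark \ref{rmk:SK1}, to a conjecture of Suslin predicting $SK_{1}(\mathcal{Q}(\mathcal{G}))=0$ unconditionally, and otherwise restricting to situations in which the relevant $SK_{1}$ is seen to vanish via Corollary \ref{cor:no-skewfields} or Corollary \ref{cor:hybrid-implies-nr-surjective}.
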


\begin{remark}\label{rmk:SK1}
Let $SK_{1}(\mathcal{Q}(\mathcal{G})) = \ker(\nr: K_{1}(\mathcal{Q}(\mathcal{G})) \longrightarrow \zeta(\mathcal{Q}(\mathcal{G}))^{\times})$.
If $SK_{1}(\mathcal{Q}(\mathcal{G}))$ vanishes then it is clear that the uniqueness statement of the EIMC follows from
its existence statement.
Moreover, $SK_{1}(\mathcal{Q}(\mathcal{G}))$ vanishes if no skewfields appear in the Wedderburn decomposition of
$\mathcal{Q}(\mathcal{G})$; in particular, this is the case if $\mathcal{G}$ is abelian or, more generally, if $p$ does not
divide the order of the commutator subgroup $\mathcal{G}'$ of $\mathcal{G}$ (see Corollary \ref{cor:no-skewfields}).
As noted in \cite[Remark E]{MR2114937} (also see \cite[Remark 3.5]{MR3294653}), a conjecture of Suslin implies that $SK_{1}(\mathcal{Q}(\mathcal{G}))$ in fact always vanishes.
\end{remark}

\subsection{Relation to the framework of \cite{MR2217048}}
We now discuss Conjecture \ref{conj:EIMC} within the framework of the theory of \cite[\S 3]{MR2217048};
this section may be skipped if the reader is only interested in the formulation of \S \ref{subsec:EIMC-reformulation}.
Let
\[
\pi: \mathcal{G} \rightarrow \GL_{n}(\mathcal{O})
\]
be a continuous homomorphism, where $\mathcal{O}=\mathcal{O}_{F}$ denotes the ring of integers of $F$
and $n$ is some integer greater or equal to $1$.
There is a ring homomorphism
\begin{equation} \label{eqn:first_Phi}
\Phi_{\pi}: \Lambda(\mathcal{G}) \rightarrow M_{n\times n}(\Lambda^{\mathcal{O}}(\Gamma_{K}))
\end{equation}
induced by the continuous group homomorphism
\begin{eqnarray*}
\mathcal{G} & \rightarrow & (M_{n \times n}(\mathcal{O}) \otimes_{\Z_p} \Lambda(\Gamma_{K}))\mal = \GL_{n}(\Lambda^{\mathcal{O}}(\Gamma_{K}))\\
\sigma & \mapsto & \pi(\sigma) \otimes \overline{\sigma},
\end{eqnarray*}
where $\overline{\sigma}$ denotes the image of $\sigma$ in $\mathcal{G} / H = \Gamma_{K}$. 
By \cite[Lemma 3.3]{MR2217048} the
homomorphism \eqref{eqn:first_Phi} extends to a ring homomorphism
\[
\Phi_{\pi}: \mathcal{Q}(\mathcal{G}) \rightarrow M_{n\times n}(\mathcal{Q}^{F}(\Gamma_{K}))
\]
and this in turn induces a homomorphism
\[
\Phi_{\pi}': K_{1}(\mathcal{Q}(\mathcal{G})) \rightarrow 
K_{1}(M_{n\times n}(\mathcal{Q}^{F}(\Gamma_{K}))) = \mathcal{Q}^{F}(\Gamma_{K})\mal.
\]
Let $\aug: \Lambda^{\mathcal{O}}(\Gamma_{K}) \twoheadrightarrow \mathcal{O}$ be the augmentation map and put $\mathfrak{p} = \ker(\aug)$.
Writing $\Lambda^{\mathcal{O}}(\Gamma_{K})_{\mathfrak{p}}$ for the localisation of $\Lambda^{\mathcal{O}}(\Gamma_{K})$ at $\mathfrak{p}$, it is clear that $\aug$ naturally extends to a homomorphism $\aug: \Lambda^{\mathcal{O}}(\Gamma_{K})_{\mathfrak{p}} \rightarrow F$.
One defines an evaluation map
\begin{equation*} \label{eqn:evaluation-map}
\begin{array}{rcl}
\phi: \mathcal{Q}^{F}(\Gamma_{K}) & \rightarrow & F \cup \{\infty\}\\
x & \mapsto & \left\{ \begin{array}{ll} \aug (x) & \mbox{ if } x \in \Lambda^{\mathcal{O}}(\Gamma_{K})_{\mathfrak{p}}\\
\infty & \mbox{ otherwise}. \end{array} \right.
\end{array}
\end{equation*}

For $r \in \Z$ we define maps
\[
j_{\chi}^{r}: \zeta(\mathcal{Q}^{F} (\mathcal{G})) \twoheadrightarrow \zeta(\mathcal{Q}^{F} (\mathcal{G})e_{\chi}) \simeq \mathcal{Q}^{F}(\Gamma_{\chi}) \rightarrow  \mathcal{Q}^{F}(\Gamma_{K}),
\]
where the last arrow is induced by mapping $\gamma_{\chi}$ to 
$(u^{r}\gamma_{K})^{w_{\chi}}$.
Note that $j_{\chi}^{0} = j_{\chi}$.
It is straightforward to show that for $r \in \Z$ we have
\begin{equation*}\label{eq:PhiS-jr-p-adic}
\phi(j_{\chi}^{r}(\Phi_{S})) = L_{p,S}(1-r, \chi). 
\end{equation*}

If $\zeta$ is an element of $K_{1}(\mathcal{Q}(\mathcal{G}))$, we define $\zeta(\pi)$ to be $\phi(\Phi_{\pi}'(\zeta))$.
Conjecture \ref{conj:EIMC} now implies that there is an element $\zeta_{S} \in K_{1}(\mathcal{Q}(\mathcal{G}))$ such that
$\partial(\zeta_{S}) = -[C_{S}^{\bullet}(\mathcal{L}/K)]$ and for each $r \geq 1$ divisible by $p-1$
and every irreducible Artin representation $\pi_{\chi}$ of $\mathcal{G}$ with character $\chi$ we have
\[
\zeta_{S}(\pi_{\chi}\kappa^{r}) = \phi(j_{\chi}^{r}(\Phi_{S})) = L_{p,S}(1-r, \chi),
\]
where the first equality follows from \cite[Lemma 2.3]{MR2822866}.

\subsection{A maximal order variant of the EIMC}

We shall prove the EIMC in many cases in which the $\mu=0$ hypothesis is not known;
in some of these cases we shall also prove the EIMC with uniqueness.

The following key result of Ritter and Weiss can be seen as a `maximal order variant' of Conjecture \ref{conj:EIMC}; 
crucially, it does not require the $\mu=0$ hypothesis.
We assume the setup and notation of \S \ref{subsec:EIMC-reformulation}.

\begin{theorem}\label{thm:EIMC-MaxOrd}
Let $\mathfrak{M}(\mathcal{G})$ be a maximal $\Z_{p}[[\Gamma_{0}]]$-order 
such that $\Lambda(\mathcal{G}) \subseteq \mathfrak{M}(\mathcal{G}) \subseteq \mathcal{Q}(\mathcal{G})$.
Choose $x_{S} \in K_{1}(\mathcal{Q}(\mathcal{G}))$ such that $\partial(x_{S}) = -[C_{S}^{\bullet}(\mathcal{L}/K)]$.
Then $\nr(x_{S})\Phi_{S}^{-1} \in \zeta(\mathfrak{M}(\mathcal{G}))\mal$.
\end{theorem}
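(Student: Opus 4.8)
## Proof Proposal for Theorem \ref{thm:EIMC-MaxOrd}

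The plan is to reduce the statement to a character-by-character assertion about the maps $j_{\chi}$ and then to invoke the Iwasawa main conjecture for totally real fields of Wiles \cite{MR1053488} via Brauer induction; the technical heart is the construction of Ritter and Weiss \cite{MR2114937}, which one reconciles with Conjecture \ref{conj:EIMC} through the comparison of complexes in \cite[Theorem 2.4]{MR3072281}. First note that the assertion is independent of all choices: any two admissible $x_{S}$ differ by an element of $\ker \partial = \im(K_{1}(\Lambda(\mathcal{G})) \to K_{1}(\mathcal{Q}(\mathcal{G})))$ by \eqref{eqn:Iwasawa-K-sequence}, and the reduced norm of a unit of $M_{n \times n}(\Lambda(\mathcal{G}))$ is integral over $\Z_{p}[[\Gamma_{0}]]$ with integral inverse, hence lies in $\zeta(\mathfrak{M}(\mathcal{G}))\mal$; moreover $\zeta(\mathfrak{M}(\mathcal{G}))$ is the integral closure of $\Z_{p}[[\Gamma_{0}]]$ in $\zeta(\mathcal{Q}(\mathcal{G}))$ and so does not depend on the choice of $\mathfrak{M}(\mathcal{G})$. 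Put $y_{S} := \nr(x_{S}) \Phi_{S}^{-1} \in \zeta(\mathcal{Q}(\mathcal{G}))\mal$. Then $y_{S} \in \zeta(\mathfrak{M}(\mathcal{G}))\mal$ if and only if both $y_{S}$ and $y_{S}^{-1}$ are integral over $\Z_{p}[[\Gamma_{0}]]$; integrality may be checked after the faithfully flat base change to $\mathcal{O} = \mathcal{O}_{F}$ for a sufficiently large finite extension $F/\Q_{p}$ and then componentwise in $\mathcal{O} \otimes_{\Z_{p}} \zeta(\mathcal{Q}(\mathcal{G})) = \prod_{\chi} \zeta(\mathcal{Q}^{F}(\mathcal{G}) e_{\chi})$, where by Lemma \ref{lem:unique-max-order-in-centre} integrality over $\mathcal{O}[[\Gamma_{0}]]$ in $\zeta(\mathcal{Q}^{F}(\mathcal{G}) e_{\chi}) \simeq \mathcal{Q}^{F}(\Gamma_{\chi})$ means membership in the maximal order $\Lambda^{\mathcal{O}}(\Gamma_{\chi})$. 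Thus it suffices to show that $j_{\chi}(y_{S})$ lies in $\Lambda^{\mathcal{O}}(\Gamma_{\chi})\mal$ (viewed inside $\mathcal{Q}^{F}(\Gamma_{K})$) for every $\chi \in \Irr_{\Q_{p}^{c}}(\mathcal{G})$.

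By the commutative triangle \eqref{eqn:Det_triangle} we have $j_{\chi}(\nr(x_{S})) = \Det(x_{S})(\chi)$, while $j_{\chi}(\Phi_{S}) = L_{K,S}(\chi) = G_{\chi,S}(\gamma_{K}-1)/H_{\chi}(\gamma_{K}-1)$ by the defining property of $\Phi_{S}$. The next step is to identify $\Det(x_{S})(\chi)$ as a characteristic element. The formula expressing $j_{\chi} \circ \nr$ through the determinant over $\mathcal{Q}^{F}(\Gamma_{K})$ of the action on $\Hom_{F[H]}(V_{\chi}, -)$ shows that $\Det(x_{S})(\chi)$ is a characteristic element over $\Lambda^{\mathcal{O}}(\Gamma_{K})$ of the perfect torsion complex obtained by applying $\Hom_{F[H]}(V_{\chi}, -)$ to $C_{S}^{\bullet}(\mathcal{L}/K)$, with $V_{\chi}$ replaced by a $\mathcal{G}$-stable $\mathcal{O}$-lattice. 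Since $\partial(x_{S}) = -[C_{S}^{\bullet}(\mathcal{L}/K)]$, and the only cohomology of $C_{S}^{\bullet}(\mathcal{L}/K)$ is $X_{S}$ in degree $-1$ and $\Z_{p}$ in degree $0$, with $X_{S}$ finitely generated and torsion over $\Lambda(\Gamma_{0})$ by Iwasawa's theorem \cite{MR0349627}, one obtains that $\Det(x_{S})(\chi)$ agrees up to a unit of $\Lambda^{\mathcal{O}}(\Gamma_{\chi})$ with $f_{\chi}/H_{\chi}(\gamma_{K}-1)$, where $f_{\chi} \in \Lambda^{\mathcal{O}}(\Gamma_{\chi})$ is a characteristic power series of the $\chi$-twist of $X_{S}$ and the denominator records the contribution of $H^{0} \simeq \Z_{p}$. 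Consequently $j_{\chi}(y_{S}) = \Det(x_{S})(\chi) L_{K,S}(\chi)^{-1}$ agrees up to a unit with $f_{\chi}/G_{\chi,S}(\gamma_{K}-1)$, and it remains to prove that $f_{\chi}$ and $G_{\chi,S}(\gamma_{K}-1)$ generate the same ideal of $\Lambda^{\mathcal{O}}(\Gamma_{\chi})$.

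Here Brauer induction enters. Both $\chi \mapsto G_{\chi,S}$ and $\chi \mapsto f_{\chi}$ are built from linear characters by induction: the former by the construction recalled in \S \ref{subsec:EIMC-reformulation} (Deligne--Ribet together with Greenberg's procedure \cite{MR692344}), the latter because $C_{S}^{\bullet}(\mathcal{L}/K)$ is compatible with restriction of scalars along $\Lambda(\mathcal{G}) \to \Lambda(\mathcal{U})$ for an open subgroup $\mathcal{U} = \Gal(\mathcal{L}/E)$, under which it becomes $C_{S}^{\bullet}(\mathcal{L}/E)$, combined with the inductivity of the determinant maps; these are the functorial properties of the EIMC recorded in \S \ref{sec:EIMC} and in \cite{MR2114937}. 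Each such $E$ is totally real and $\mathcal{L}/E$ is again admissible one-dimensional, so the Brauer--Witt reduction expresses the equality of ideals in terms of linear characters of groups $\Gal(\mathcal{L}/E)$, and hence ultimately in terms of abelian totally real fields. For those the required identity is precisely the Iwasawa main conjecture for totally real fields proved by Wiles \cite{MR1053488} (equivalently, the abelian case of Conjecture \ref{conj:EIMC}, which holds unconditionally). Propagating this back through the induction identities gives $f_{\chi} = G_{\chi,S}(\gamma_{K}-1) \cdot u_{\chi}$ with $u_{\chi} \in \Lambda^{\mathcal{O}}(\Gamma_{\chi})\mal$ for every $\chi$, whence $j_{\chi}(y_{S}) \in \Lambda^{\mathcal{O}}(\Gamma_{\chi})\mal$, and therefore $\nr(x_{S}) \Phi_{S}^{-1} \in \zeta(\mathfrak{M}(\mathcal{G}))\mal$, as desired.

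The principal obstacle is the identification in the second paragraph: one must pin down the precise shape of $\Det(x_{S})(\chi)$ — in particular extract the factor $H_{\chi}$ from the $H^{0} \simeq \Z_{p}$ term and check that $f_{\chi}$ is a genuine element of the smaller ring $\Lambda^{\mathcal{O}}(\Gamma_{\chi})$ rather than merely a rational function over $\Lambda^{\mathcal{O}}(\Gamma_{K})$ — and then make the functoriality of the whole package (the complex, relative $K$-theory, the $\Det$ maps, and the $p$-adic $L$-functions) precise enough under Brauer induction to legitimately descend to Wiles' theorem. This is exactly the content of the Ritter--Weiss construction, so in practice I would establish it by citing \cite{MR2114937} for the technical heart and \cite[Theorem 2.4]{MR3072281} to match their complex with $C_{S}^{\bullet}(\mathcal{L}/K)$. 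I would also emphasise that this argument produces no single preimage of $y_{S}^{-1}$ under $\nr \colon K_{1}(\mathcal{Q}(\mathcal{G})) \to \zeta(\mathcal{Q}(\mathcal{G}))\mal$: the congruences between the $e_{\chi}$-components needed for such a descent are not addressed, which is precisely why the full EIMC still requires the $\mu = 0$ hypothesis whereas this maximal-order variant does not.
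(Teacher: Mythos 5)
Your proposal is correct and rests on the same key input as the paper, whose proof simply cites \cite[Theorem 16]{MR2114937} (asserting that $\Det(x_{S})L_{K,S}^{-1}$ lies in $\Hom^{\ast}(R_{p}(\mathcal{G}), \Lambda^{c}(\Gamma_{K})^{\times})$) together with \cite[Remark H]{MR2114937} to identify this Hom-group with $\zeta(\mathfrak{M}(\mathcal{G}))^{\times}$ via diagram \eqref{eqn:Det_triangle}. Your componentwise reduction to integrality of $j_{\chi}(\nr(x_{S})\Phi_{S}^{-1})$ and your sketch of characteristic elements, Brauer induction and Wiles' theorem correctly reproduce in outline the content of the Ritter--Weiss proof of their Theorem 16, as you yourself acknowledge at the end.
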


\begin{proof}
By \cite[Theorem 16]{MR2114937} we know that
$\Det(x_{S})L_{K,S}^{-1} \in \Hom^{\ast}(R_{p}(\mathcal{G}), \Lambda^{c}(\Gamma_{K})^{\times})$,
where $\Lambda^{c}(\Gamma_{K}) := \Z_{p}^{c} \otimes_{\Z_{p}} \Lambda(\Gamma_{K})$ and $\Z_{p}^{c}$ denotes the integral closure
of $\Z_{p}$ in $\Q_{p}^{c}$.
Moreover, $\Hom^{\ast}(R_{p}(\mathcal{G}), \Lambda^{c}(\Gamma_{K})^{\times})$ identifies with $\zeta(\mathfrak{M}(\mathcal{G}))\mal$
under the isomorphism in diagram \eqref{eqn:Det_triangle} as is explained in \cite[Remark H]{MR2114937}. Thus $\nr(x_{S}) \Phi_{S}^{-1}$ lies in $\zeta(\mathfrak{M}(\mathcal{G}))\mal$.
\end{proof}

\begin{corollary} \label{cor:EIMC-MaxOrd}
Let $\mathfrak{M}(\mathcal{G})$ be a maximal $\Z_{p}[[\Gamma_{0}]]$-order such that 
$\Lambda(\mathcal{G}) \subseteq \mathfrak{M}(\mathcal{G}) \subseteq \mathcal{Q}(\mathcal{G})$
and let $e \in \mathfrak{M}(\mathcal{G})$ be a central idempotent. Suppose that the reduced norm map
\begin{equation} \label{eqn:nr-surjective-hypothesis}
\nr: K_{1}(e \mathfrak{M}(\mathcal{G})) \longrightarrow \zeta(e \mathfrak{M}(\mathcal{G}))\mal
\end{equation}
is surjective.
Then there exists $y_{S} \in K_{1}(e \mathcal{Q}(\mathcal{G}))$ such that $\nr(y_{S}) = e \Phi_{S}$
and $y_{S}$ maps to $[e \mathfrak{M}(\mathcal{G}) \otimes^{\mathbb{L}}_{\Lambda(\mathcal{G})} C_{S}^{\bullet}(\mathcal{L}/K)]$ under
$K_{1}(e \mathcal{Q}(\mathcal{G})) \rightarrow K_{0}(e \mathfrak{M}(\mathcal{G}), e \mathcal{Q}(\mathcal{G}))$.
\end{corollary}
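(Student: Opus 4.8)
The strategy is to transport the maximal-order statement of Theorem \ref{thm:EIMC-MaxOrd} across the idempotent $e$ and then use the surjectivity hypothesis \eqref{eqn:nr-surjective-hypothesis} to lift the resulting central unit to $K_{1}$. First I would fix, as in Theorem \ref{thm:EIMC-MaxOrd}, an element $x_{S} \in K_{1}(\mathcal{Q}(\mathcal{G}))$ with $\partial(x_{S}) = -[C_{S}^{\bullet}(\mathcal{L}/K)]$; such an element exists by the surjectivity of $\partial$ in \eqref{eqn:Iwasawa-K-sequence}. Applying the ring homomorphism $\mathcal{Q}(\mathcal{G}) \to e\mathcal{Q}(\mathcal{G})$ yields $e x_{S} \in K_{1}(e\mathcal{Q}(\mathcal{G}))$, and, by functoriality of the boundary map in relative $K$-theory, $e x_{S}$ maps to $[e\mathfrak{M}(\mathcal{G}) \otimes^{\mathbb{L}}_{\Lambda(\mathcal{G})} C_{S}^{\bullet}(\mathcal{L}/K)]$ under $K_{1}(e\mathcal{Q}(\mathcal{G})) \to K_{0}(e\mathfrak{M}(\mathcal{G}), e\mathcal{Q}(\mathcal{G}))$ — up to the usual sign conventions, which I would reconcile by noting that $[e\mathfrak{M}(\mathcal{G})\otimes^{\mathbb{L}}_{\Lambda(\mathcal{G})} C_{S}^{\bullet}(\mathcal{L}/K)]$ is the image of $-[C_{S}^{\bullet}(\mathcal{L}/K)]$ under scalar extension along $\Lambda(\mathcal{G}) \to e\mathfrak{M}(\mathcal{G})$.

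Next, Theorem \ref{thm:EIMC-MaxOrd} gives $\nr(x_{S})\Phi_{S}^{-1} \in \zeta(\mathfrak{M}(\mathcal{G}))^{\times}$. Multiplying by the idempotent $e$ and using that $\nr$ is compatible with the projection $\zeta(\mathcal{Q}(\mathcal{G})) \to \zeta(e\mathcal{Q}(\mathcal{G}))$, we obtain
\[
\nr(e x_{S}) \cdot (e\Phi_{S})^{-1} = e\bigl(\nr(x_{S})\Phi_{S}^{-1}\bigr) \in \zeta(e\mathfrak{M}(\mathcal{G}))^{\times}.
\]
By the surjectivity hypothesis \eqref{eqn:nr-surjective-hypothesis}, there exists $v \in K_{1}(e\mathfrak{M}(\mathcal{G}))$ with $\nr(v) = e\bigl(\nr(x_{S})\Phi_{S}^{-1}\bigr)$. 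I would then set $y_{S} := v^{-1} \cdot ex_{S}$, viewed inside $K_{1}(e\mathcal{Q}(\mathcal{G}))$ via the natural map $K_{1}(e\mathfrak{M}(\mathcal{G})) \to K_{1}(e\mathcal{Q}(\mathcal{G}))$. Then $\nr(y_{S}) = \nr(v)^{-1}\nr(ex_{S}) = e\Phi_{S}$ by construction.

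It remains to check that $y_{S}$ still maps to $[e\mathfrak{M}(\mathcal{G}) \otimes^{\mathbb{L}}_{\Lambda(\mathcal{G})} C_{S}^{\bullet}(\mathcal{L}/K)]$ in $K_{0}(e\mathfrak{M}(\mathcal{G}), e\mathcal{Q}(\mathcal{G}))$. This follows because $v$ comes from $K_{1}(e\mathfrak{M}(\mathcal{G}))$, so in the localisation sequence
\[
K_{1}(e\mathfrak{M}(\mathcal{G})) \longrightarrow K_{1}(e\mathcal{Q}(\mathcal{G})) \stackrel{\partial}{\longrightarrow} K_{0}(e\mathfrak{M}(\mathcal{G}), e\mathcal{Q}(\mathcal{G}))
\]
the image of $v$ lies in the kernel of $\partial$; hence $\partial(y_{S}) = \partial(ex_{S}) - \partial(v) = \partial(ex_{S})$, which is the class we want. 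Finally I would remark that $e\Phi_{S}$ makes sense since $\Phi_{S} \in \zeta(\mathcal{Q}(\mathcal{G}))^{\times}$ and $e$ is central, and that the construction is visibly independent of the choice of $x_{S}$ up to the kernel of $\nr$ on $K_{1}(e\mathcal{Q}(\mathcal{G}))$.

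The only genuinely delicate point is the bookkeeping with signs and with the precise identification of the class $[e\mathfrak{M}(\mathcal{G}) \otimes^{\mathbb{L}}_{\Lambda(\mathcal{G})} C_{S}^{\bullet}(\mathcal{L}/K)]$ as the image of $ex_{S}$ under scalar extension followed by the boundary map; everything else is a formal diagram chase through the localisation sequences for $\Lambda(\mathcal{G})$ and for $e\mathfrak{M}(\mathcal{G})$ together with the naturality of $\nr$ and $\partial$ with respect to the ring homomorphisms $\Lambda(\mathcal{G}) \to e\mathfrak{M}(\mathcal{G})$ and $\mathcal{Q}(\mathcal{G}) \to e\mathcal{Q}(\mathcal{G})$. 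I expect this to be the main (though still routine) obstacle.
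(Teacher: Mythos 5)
Your proof is correct and is essentially identical to the paper's: both fix $x_{S}$ as in Theorem \ref{thm:EIMC-MaxOrd}, use the surjectivity hypothesis to produce an element of $K_{1}(e\mathfrak{M}(\mathcal{G}))$ with reduced norm $e\,\nr(x_{S})\Phi_{S}^{-1}$, and divide $ex_{S}$ by its image in $K_{1}(e\mathcal{Q}(\mathcal{G}))$ to obtain $y_{S}$. The only point to tidy is the parenthetical sign reconciliation: scalar extension sends $[C_{S}^{\bullet}(\mathcal{L}/K)]$ (not its negative) to $[e\mathfrak{M}(\mathcal{G})\otimes^{\mathbb{L}}_{\Lambda(\mathcal{G})}C_{S}^{\bullet}(\mathcal{L}/K)]$, and the sign comes from the convention relating the connecting map $K_{1}(e\mathcal{Q}(\mathcal{G}))\to K_{0}(e\mathfrak{M}(\mathcal{G}),e\mathcal{Q}(\mathcal{G}))$ to $\partial$, but this is a bookkeeping matter the paper's own proof also leaves implicit.
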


\begin{proof}
Choose $x_{S} \in K_{1}(\mathcal{Q}(\mathcal{G}))$ as in Theorem \ref{thm:EIMC-MaxOrd}. By assumption, there is
$z_{S} \in K_{1}(e \mathfrak{M}(\mathcal{G}))$ such that $\nr(z_{S}) = e \nr(x_{S}) \Phi_{S}^{-1}$.
Let $z'_{S}$ be the image of $z_{S}$ in $K_{1}(e \mathcal{Q}(\mathcal{G}))$.
 Then $y_{S} := e x_{S} (z'_{S})^{-1}$ has the desired properties.
\end{proof}

\begin{remark}\label{rmk:nr-surjective}
It is not clear whether the map \eqref{eqn:nr-surjective-hypothesis} is always surjective. 
However, this map is surjective if no skewfields
occur in the Wedderburn decomposition of $e \mathcal{Q}(\mathcal{G})$, and thus one can always take $e = e_{\mathcal{G}'}$,
where $\mathcal{G'}$ is the commutator subgroup of $\mathcal{G}$ (note that $\mathcal{G}' \leq H$).
If $p$ does not divide the order of $\mathcal{G}'$, then by Corollary \ref{cor:no-skewfields}
one can take an arbitrary $e$ (in particular, $e=1$ is possible).
If $\Lambda(\mathcal{G})$ is $N$-hybrid then Corollary \ref{cor:hybrid-implies-nr-surjective} shows that one can take $e=1-e_{N}$.
\end{remark}

\begin{theorem}\label{thm:EIMC-for-p-not-dividing-ordH}
Let $\mathcal{L}/K$ be an admissible one-dimensional $p$-adic Lie extension with Galois group $\mathcal{G} =  H \rtimes \Gamma$.
If $p \nmid |H|$ then the EIMC with uniqueness holds for $\mathcal{L}/K$.
\end{theorem}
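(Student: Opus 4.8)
The plan is to deduce the theorem directly from the maximal order variant of the EIMC (Theorem~\ref{thm:EIMC-MaxOrd} and Corollary~\ref{cor:EIMC-MaxOrd}), exploiting the fact that the hypothesis $p \nmid |H|$ forces the Iwasawa algebra itself to be a maximal order. Indeed, by Remark~\ref{rmk:p-not-divide-H-max-order} the condition $p \nmid |H|$ is precisely the assertion that $\Lambda(\mathcal{G})$ is a maximal $\Z_{p}[[\Gamma_{0}]]$-order (equivalently, that $\Lambda(\mathcal{G})$ is $H$-hybrid), so one may take $\mathfrak{M}(\mathcal{G}) = \Lambda(\mathcal{G})$ throughout.

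Next I would note that the commutator subgroup $\mathcal{G}'$ is contained in $H$, so $p \nmid |\mathcal{G}'|$ as well, and hence by Corollary~\ref{cor:no-skewfields} no skewfields occur in the Wedderburn decomposition of $\mathcal{Q}(\mathcal{G})$. I would use this in two ways. First, it makes the reduced norm map $\nr \colon K_{1}(\Lambda(\mathcal{G})) \to \zeta(\Lambda(\mathcal{G}))^{\times}$ surjective, which is exactly hypothesis~\eqref{eqn:nr-surjective-hypothesis} of Corollary~\ref{cor:EIMC-MaxOrd} for $e = 1$ (see Remark~\ref{rmk:nr-surjective}); applying that corollary with $\mathfrak{M}(\mathcal{G}) = \Lambda(\mathcal{G})$ and $e = 1$, and tracing through its proof while noting $\Lambda(\mathcal{G}) \otimes^{\mathbb{L}}_{\Lambda(\mathcal{G})} C_{S}^{\bullet}(\mathcal{L}/K) = C_{S}^{\bullet}(\mathcal{L}/K)$, then yields an element $\zeta_{S} \in K_{1}(\mathcal{Q}(\mathcal{G}))$ with $\nr(\zeta_{S}) = \Phi_{S}$ and $\partial(\zeta_{S}) = -[C_{S}^{\bullet}(\mathcal{L}/K)]$, i.e.\ the existence assertion of Conjecture~\ref{conj:EIMC}. (Alternatively one could simply rerun the short argument of Corollary~\ref{cor:EIMC-MaxOrd}: pick $x_{S}$ with $\partial(x_{S}) = -[C_{S}^{\bullet}(\mathcal{L}/K)]$ as in Theorem~\ref{thm:EIMC-MaxOrd}, use surjectivity of $\nr$ on $K_{1}(\Lambda(\mathcal{G}))$ to correct it to $\zeta_{S}$, and note that the correction term lies in the image of $K_{1}(\Lambda(\mathcal{G}))$, hence in $\ker \partial$.) Second, the absence of skewfields forces $SK_{1}(\mathcal{Q}(\mathcal{G})) = 0$ (Remark~\ref{rmk:SK1}), whence the uniqueness of $\zeta_{S}$, i.e.\ Conjecture~\ref{conj:EIMC-unique}, follows from its existence.

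I do not anticipate a genuine obstacle here, since the statement is a short formal consequence of results already established; the only point requiring care is the bookkeeping needed to match the formulation of Corollary~\ref{cor:EIMC-MaxOrd} (phrased via the map $K_{1}(e\mathcal{Q}(\mathcal{G})) \to K_{0}(e\mathfrak{M}(\mathcal{G}), e\mathcal{Q}(\mathcal{G}))$ and the complex $e\mathfrak{M}(\mathcal{G}) \otimes^{\mathbb{L}}_{\Lambda(\mathcal{G})} C_{S}^{\bullet}(\mathcal{L}/K)$) with the clean statement of Conjecture~\ref{conj:EIMC} in terms of $\partial$ and $[C_{S}^{\bullet}(\mathcal{L}/K)]$ — but for $e = 1$ and $\mathfrak{M}(\mathcal{G}) = \Lambda(\mathcal{G})$ the two descriptions coincide.
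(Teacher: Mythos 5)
Your proposal is correct and follows essentially the same route as the paper's own proof: both observe that $p\nmid|H|$ makes $\Lambda(\mathcal{G})$ a maximal order (Remark~\ref{rmk:p-not-divide-H-max-order}), note $\mathcal{G}'\subseteq H$ so Corollary~\ref{cor:no-skewfields} rules out skewfields, invoke Remark~\ref{rmk:nr-surjective} and Corollary~\ref{cor:EIMC-MaxOrd} with $e=1$ and $\mathfrak{M}(\mathcal{G})=\Lambda(\mathcal{G})$ for existence, and Remark~\ref{rmk:SK1} for uniqueness. The only difference is cosmetic ordering and your extra (helpful but unneeded) remark that $\Lambda(\mathcal{G})\otimes^{\mathbb{L}}_{\Lambda(\mathcal{G})}C_{S}^{\bullet}(\mathcal{L}/K)=C_{S}^{\bullet}(\mathcal{L}/K)$.
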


\begin{remark}
Theorem \ref{thm:EIMC-for-p-not-dividing-ordH} does not require the $\mu=0$ hypothesis.
The statement is the same as that of \cite[Example 2]{MR2205173},
except that the proof of this latter result does require the $\mu=0$ hypothesis which is not explicitly stated (see \cite[p.\ 48]{MR2242618}).
\end{remark}

\begin{proof}[Proof of Theorem \ref{thm:EIMC-for-p-not-dividing-ordH}]
Since the commutator subgroup $\mathcal{G}'$ is a subgroup of $H$,
Corollary \ref{cor:no-skewfields} shows that the Wedderburn decomposition of $\mathcal{Q}(\mathcal{G})$ contains no skewfields.
This has two consequences. First, uniqueness follows from Remark \ref{rmk:SK1}.
Second, by Remark \ref{rmk:nr-surjective} the hypotheses of Corollary \ref{cor:EIMC-MaxOrd} are satisfied for every choice of $e$.
However, Remark \ref{rmk:p-not-divide-H-max-order} shows that $\Lambda(\mathcal{G})$ is in fact a maximal order since $p \nmid |H|$.
Therefore the EIMC for $\mathcal{L}/K$ follows from Corollary \ref{cor:EIMC-MaxOrd} with $e=1$.
\end{proof}

\subsection{Functorialities}
Let $\mathcal{L}/K$ be an admissible one-dimensional $p$-adic Lie extension with Galois group $\mathcal{G}$.
Let $N$ be a finite normal subgroup of $\mathcal{G}$ and let $\mathcal{H}$ be an open subgroup of $\mathcal{G}$.
There are canonical maps
    \begin{eqnarray*}
        \quot^{\mathcal{G}}_{\mathcal{G}/N}: & K_{0}(\Lambda(\mathcal{G}), \mathcal{Q}(\mathcal{G})) \longrightarrow &
            K_{0}(\Lambda(\mathcal{G}/N), \mathcal{Q}(\mathcal{G}/N)),\\
            \res^{\mathcal{G}}_{\mathcal{H}}: & K_{0}(\Lambda(\mathcal{G}), \mathcal{Q}(\mathcal{G})) \longrightarrow &
            K_{0}(\Lambda(\mathcal{H}), \mathcal{Q}(\mathcal{H}))
    \end{eqnarray*}
    induced from scalar extension along $\Lambda(\mathcal{G}) \longrightarrow \Lambda(\mathcal{G}/N)$ and restriction of scalars
    along $\Lambda(\mathcal{H}) \hookrightarrow \Lambda(\mathcal{G})$. Similarly, we have maps (see \cite[\S 3]{MR2114937})
    \begin{eqnarray*}
        \quot^{\mathcal{G}}_{\mathcal{G}/N}: & \Hom^{\ast}(R_{p}(\mathcal{G}), \mathcal{Q}^{c}(\Gamma_{K})\mal) \longrightarrow &
            \Hom^{\ast}(R_{p}(\mathcal{G}/N), \mathcal{Q}^{c}(\Gamma_{K})\mal),\\
        \res^{\mathcal{G}}_{\mathcal{H}}: & \Hom^{\ast}(R_{p}(\mathcal{G}), \mathcal{Q}^{c}(\Gamma_{K})\mal) \longrightarrow &
            \Hom^{\ast}(R_{p}(\mathcal{H}), \mathcal{Q}^{c}(\Gamma_{K'})\mal),
    \end{eqnarray*}
    where $K' := \mathcal{L}^{\mathcal{H}}$; here for $f \in \Hom^{\ast}(R_{p}(\mathcal{G}), \mathcal{Q}^{c}(\Gamma_{K})\mal)$
    we have $(\quot^{\mathcal{G}}_{\mathcal{G}/N} f)(\chi) = f(\infl^{\mathcal{G}}_{\mathcal{G}/N} \chi)$ and
    $(\res^{\mathcal{G}}_{\mathcal{H}} f)(\chi') = f(\ind^{\mathcal{G}}_{\mathcal{H}} \chi')$ for $\chi \in R_{p}(\mathcal{G}/N)$
    and $\chi' \in R_{p}(\mathcal{H})$. Then diagram \eqref{eqn:Det_triangle} induces canonical maps
    \begin{eqnarray*}
        \quot^{\mathcal{G}}_{\mathcal{G}/N}: & \zeta(\mathcal{Q}(\mathcal{G}))\mal \longrightarrow &
            \zeta(\mathcal{Q}(\mathcal{G}/N))\mal,\\
        \res^{\mathcal{G}}_{\mathcal{H}}: & \zeta(\mathcal{Q}(\mathcal{G}))^{\times} \longrightarrow &
            \zeta(\mathcal{Q}(\mathcal{H}))\mal.
    \end{eqnarray*}
The first map is easily seen to be induced by the canonical projection $\mathcal{G} \twoheadrightarrow \mathcal{G}/N$.

The following proposition is an obvious reformulation of \cite[Proposition 12]{MR2114937}
(note that the proof of \cite[Proposition 12 (1)(a)]{MR2114937} uses a result which assumes Leopoldt's conjecture;
a direct proof without this assumption is given in \cite[Appendix]{MR2813337}); also see
\cite[Proposition 1.6.5]{MR2276851}.

\begin{prop} \label{prop:funtorialities}
Let $\mathcal{L}/K$ be an admissible one-dimensional $p$-adic Lie extension with Galois group $\mathcal{G}$.
Then the following statements hold.
\begin{enumerate}
\item
Let $N$ be a finite normal subgroup of $\mathcal{G}$ and put $\mathcal{L}' := \mathcal{L}^{N}$.
Then
\[
\quot^{\mathcal{G}}_{\mathcal{G}/N}([C_{S}^{\bullet}(\mathcal{L}/K)]) = [C_{S}^{\bullet}(\mathcal{L}'/K)], \quad
\quot^{\mathcal{G}}_{\mathcal{G}/N}(\Phi_{S}(\mathcal{L}/K)) = \Phi_{S}(\mathcal{L}'/K).
\]
In particular, if the EIMC holds for $\mathcal{L}/K$, then it holds for $\mathcal{L}'/K$.
\item
Let $\mathcal{H}$ be an open subgroup of $\mathcal{G}$ and put $K' := \mathcal{L}^{\mathcal{H}}$. Then
\[
\res^{\mathcal{G}}_{\mathcal{H}}([C_{S}^{\bullet}(\mathcal{L}/K)]) = [C_{S}^{\bullet}(\mathcal{L}/K')], \quad
\res^{\mathcal{G}}_{\mathcal{H}}(\Phi_{S}(\mathcal{L}/K)) = \Phi_{S}(\mathcal{L}/K').
\]
In particular, if the EIMC holds for $\mathcal{L}/K$, then it holds for $\mathcal{L}/K'$.
\end{enumerate}
\end{prop}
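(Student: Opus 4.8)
The plan is to deduce both parts directly from the corresponding statements of Ritter and Weiss, namely \cite[Proposition 12]{MR2114937}, using the identification of the canonical complex $C_{S}^{\bullet}(\mathcal{L}/K)$ with the Ritter--Weiss complex supplied by \cite[Theorem 2.4]{MR3072281}. Since $\quot^{\mathcal{G}}_{\mathcal{G}/N}$ and $\res^{\mathcal{G}}_{\mathcal{H}}$ on relative $K$-groups only see the class in $K_{0}(\Lambda(\mathcal{G}),\mathcal{Q}(\mathcal{G}))$, and that class is unchanged under a derived isomorphism, it suffices to verify the displayed equalities for any complex representing it. The one genuinely delicate point is that the original proof of \cite[Proposition 12 (1)(a)]{MR2114937} invokes a result conditional on Leopoldt's conjecture; here I would instead cite the unconditional argument of \cite[Appendix]{MR2813337}, and also \cite[Proposition 1.6.5]{MR2276851}.

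For the statements about the complex I would argue as follows. The map $\res^{\mathcal{G}}_{\mathcal{H}}$ is restriction of scalars along $\Lambda(\mathcal{H}) \hookrightarrow \Lambda(\mathcal{G})$; since neither $\mathcal{O}_{\mathcal{L},S}$ nor the relevant set of places changes (enlarging $S$ so as to contain $S_{\ram}(\mathcal{L}/K') \cup S_{\infty}$ affects neither the complex nor the truth of the EIMC), the complex $C_{S}^{\bullet}(\mathcal{L}/K')$ \emph{is} $C_{S}^{\bullet}(\mathcal{L}/K)$ regarded as a complex of $\Lambda(\mathcal{H})$-modules, giving $\res^{\mathcal{G}}_{\mathcal{H}}([C_{S}^{\bullet}(\mathcal{L}/K)]) = [C_{S}^{\bullet}(\mathcal{L}/K')]$ immediately. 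The map $\quot^{\mathcal{G}}_{\mathcal{G}/N}$ is $\Lambda(\mathcal{G}/N) \otimes^{\mathbb{L}}_{\Lambda(\mathcal{G})}(-)$; here I would use that $\Spec(\mathcal{O}_{\mathcal{L},S}) \to \Spec(\mathcal{O}_{\mathcal{L}',S})$ is finite \'etale (as $S \supseteq S_{\ram}$), so that \'etale base change identifies $\Lambda(\mathcal{G}/N) \otimes^{\mathbb{L}}_{\Lambda(\mathcal{G})} C_{S}^{\bullet}(\mathcal{L}/K)$ with $C_{S}^{\bullet}(\mathcal{L}'/K)$ in $\mathcal{D}^{\perf}\tor(\Lambda(\mathcal{G}/N))$; equivalently this is \cite[Proposition 12 (1)(a)]{MR2114937} transported through \cite[Theorem 2.4]{MR3072281} applied to both $\mathcal{L}/K$ and $\mathcal{L}'/K$.

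For the statements about $\Phi_{S}$ I would work entirely on the $\Hom^{\ast}$-side of the isomorphism in \eqref{eqn:Det_triangle}. Under $\zeta(\mathcal{Q}(\mathcal{G}))^{\times} \simeq \Hom^{\ast}(R_{p}(\mathcal{G}), \mathcal{Q}^{c}(\Gamma_{K})^{\times})$ the element $\Phi_{S}(\mathcal{L}/K)$ corresponds to $L_{K,S}$, and $\quot^{\mathcal{G}}_{\mathcal{G}/N}$, $\res^{\mathcal{G}}_{\mathcal{H}}$ correspond respectively to precomposition with inflation $R_{p}(\mathcal{G}/N) \to R_{p}(\mathcal{G})$ and with induction $R_{p}(\mathcal{H}) \to R_{p}(\mathcal{G})$. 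Since $G_{\chi,S}$, and hence $L_{K,S}(\chi)$, depends only on the $p$-adic Artin $L$-function attached to $\chi$ (this is exactly how it is constructed, via Deligne--Ribet and Brauer induction, cf.\ \cite{MR692344}), and $p$-adic Artin $L$-functions are inflation-invariant and inductive, one obtains $L_{K,S} \circ \infl^{\mathcal{G}}_{\mathcal{G}/N} = L_{K,S}(\mathcal{L}'/K)$ and $L_{K,S} \circ \ind^{\mathcal{G}}_{\mathcal{H}} = L_{K',S}(\mathcal{L}/K')$; translating back through \eqref{eqn:Det_triangle} yields the two claimed identities. (For $\quot^{\mathcal{G}}_{\mathcal{G}/N}$ this is also just the observation, recorded after \eqref{eqn:Det_triangle}, that the induced map on centres is the one coming from $\mathcal{G} \twoheadrightarrow \mathcal{G}/N$.)

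Finally, for the ``in particular'' assertions I would lift everything to $K_{1}$: scalar extension and restriction of scalars induce homomorphisms $\quot^{\mathcal{G}}_{\mathcal{G}/N}: K_{1}(\mathcal{Q}(\mathcal{G})) \to K_{1}(\mathcal{Q}(\mathcal{G}/N))$ and $\res^{\mathcal{G}}_{\mathcal{H}}: K_{1}(\mathcal{Q}(\mathcal{G})) \to K_{1}(\mathcal{Q}(\mathcal{H}))$ which fit into maps of the localisation sequence \eqref{eqn:Iwasawa-K-sequence}, hence commute with $\partial$, and which are compatible with $\nr$ by the definition of $\Det$ and diagram \eqref{eqn:Det_triangle}. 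Thus if $\zeta_{S}$ witnesses Conjecture \ref{conj:EIMC} for $\mathcal{L}/K$, then $\quot^{\mathcal{G}}_{\mathcal{G}/N}(\zeta_{S})$ satisfies $\partial(\quot^{\mathcal{G}}_{\mathcal{G}/N}\zeta_{S}) = \quot^{\mathcal{G}}_{\mathcal{G}/N}(-[C_{S}^{\bullet}(\mathcal{L}/K)]) = -[C_{S}^{\bullet}(\mathcal{L}'/K)]$ and $\nr(\quot^{\mathcal{G}}_{\mathcal{G}/N}\zeta_{S}) = \quot^{\mathcal{G}}_{\mathcal{G}/N}(\Phi_{S}(\mathcal{L}/K)) = \Phi_{S}(\mathcal{L}'/K)$, so it witnesses the EIMC for $\mathcal{L}'/K$; the argument for $\res^{\mathcal{G}}_{\mathcal{H}}$ is identical. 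I expect the main obstacle to be precisely this bookkeeping of $K$-theoretic compatibilities together with the Leopoldt-free version of \cite[Proposition 12 (1)(a)]{MR2114937} --- but since both are available in \cite{MR2114937} and \cite[Appendix]{MR2813337}, the proposition is indeed, as stated, an obvious reformulation.
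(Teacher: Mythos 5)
Your proposal is correct and follows exactly the route the paper takes: the paper's own ``proof'' is the single remark that the proposition is an obvious reformulation of \cite[Proposition 12]{MR2114937}, combined with the identification of complexes from \cite[Theorem 2.4]{MR3072281} and the Leopoldt-free argument of \cite[Appendix]{MR2813337}. You have simply filled in the bookkeeping (the $K$-theoretic compatibilities with $\partial$ and $\nr$, the $\Hom^{\ast}$-side translation of $\Phi_{S}$ via the inductivity and inflation-invariance of $p$-adic Artin $L$-functions, and the \'etale base change argument for the quotient map) that the authors leave implicit.
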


\subsection{The EIMC over hybrid Iwasawa algebras}
We show how hybrid Iwasawa algebras can be used to `break up' certain cases of the EIMC.

\begin{theorem}\label{thm:EIMC-break-down}
Let $\mathcal{L}/K$ be an admissible one-dimensional $p$-adic Lie extension with Galois group $\mathcal{G}$.
Suppose that $\Lambda(\mathcal{G})$ is $N$-hybrid for some
finite normal subgroup $N$ of $\mathcal{G}$.
Let $\overline{\mathcal{P}}$ be a Sylow $p$-subgroup of $\overline{\mathcal{G}}:=\Gal(\mathcal{L}^{N}/K) \simeq \mathcal{G}/N$.
Then the following statements hold.
\begin{enumerate}
\item The EIMC holds for $\mathcal{L}/K$ if and only if it holds for $\mathcal{L}^{N}/K$.
\item The EIMC with uniqueness holds for $\mathcal{L}/K$ if and only if it holds for $\mathcal{L}^{N}/K$.
\item If $(\mathcal{L}^{N})^{\overline{\mathcal{P}}} / \Q$ is abelian, then the EIMC holds for $\mathcal{L}/K$.
\item If $\mathcal{L}^{N} / \Q$ is abelian, then the EIMC with uniqueness holds for $\mathcal{L}/K$.
\end{enumerate}
\end{theorem}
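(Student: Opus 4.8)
The plan is to split every object in sight along the central idempotent $e_{N}$. Since $\Lambda(\mathcal{G})$ is $N$-hybrid we have $e_{N}\in\Lambda(\mathcal{G})$ and $\Lambda(\mathcal{G})=\Lambda(\mathcal{G})e_{N}\oplus\Lambda(\mathcal{G})(1-e_{N})$, where $\Lambda(\mathcal{G})e_{N}\simeq\Lambda(\mathcal{G}/N)=\Lambda(\overline{\mathcal{G}})$, while by Corollary \ref{cor:hybrid-implies-nr-surjective} the ring $\Lambda(\mathcal{G})(1-e_{N})$ is a direct product of matrix rings over integrally closed commutative local rings; in particular it is a maximal $\Z_{p}[[\Gamma_{0}]]$-order and no skewfield occurs in the Wedderburn decomposition of $(1-e_{N})\mathcal{Q}(\mathcal{G})$. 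Correspondingly $K_{1}(\mathcal{Q}(\mathcal{G}))$, $K_{0}(\Lambda(\mathcal{G}),\mathcal{Q}(\mathcal{G}))$ and $\zeta(\mathcal{Q}(\mathcal{G}))^{\times}$ decompose as direct products of an $e_{N}$-part and a $(1-e_{N})$-part, compatibly with $\partial$ and $\nr$. Also $N\unlhd H$ by Lemma \ref{lem:kernel-criterion}, so $\mathcal{L}^{N}\supseteq\mathcal{L}^{H}=K_{\infty}$ and $\mathcal{L}^{N}/K$ is again an admissible one-dimensional $p$-adic Lie extension, with Galois group $\overline{\mathcal{G}}$; and under $\Lambda(\mathcal{G})e_{N}\simeq\Lambda(\overline{\mathcal{G}})$ the $e_{N}$-components of $[C_{S}^{\bullet}(\mathcal{L}/K)]$ and $\Phi_{S}(\mathcal{L}/K)$ are identified, by Proposition \ref{prop:funtorialities}(i) (recall that $\quot^{\mathcal{G}}_{\mathcal{G}/N}$ is induced by $\mathcal{G}\twoheadrightarrow\mathcal{G}/N$, i.e.\ by multiplication by $e_{N}$), with $[C_{S}^{\bullet}(\mathcal{L}^{N}/K)]$ and $\Phi_{S}(\mathcal{L}^{N}/K)$.

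Granting this, the EIMC for $\mathcal{L}/K$ decouples into its $e_{N}$-part -- which is precisely the EIMC for $\mathcal{L}^{N}/K$ -- and its $(1-e_{N})$-part, and the $(1-e_{N})$-part holds unconditionally. Indeed, since $\Lambda(\mathcal{G})$ is $N$-hybrid, Remark \ref{rmk:nr-surjective} shows that the hypothesis of Corollary \ref{cor:EIMC-MaxOrd} is satisfied for $e=1-e_{N}$; choosing a maximal $\Z_{p}[[\Gamma_{0}]]$-order $\mathfrak{M}(\mathcal{G})$ with $\Lambda(\mathcal{G})\subseteq\mathfrak{M}(\mathcal{G})\subseteq\mathcal{Q}(\mathcal{G})$ and $(1-e_{N})\mathfrak{M}(\mathcal{G})=\Lambda(\mathcal{G})(1-e_{N})$ (legitimate, as the latter is already maximal), that corollary yields $y_{S}\in K_{1}((1-e_{N})\mathcal{Q}(\mathcal{G}))$ with $\nr(y_{S})=(1-e_{N})\Phi_{S}(\mathcal{L}/K)$ and $\partial(y_{S})=-(1-e_{N})[C_{S}^{\bullet}(\mathcal{L}/K)]$. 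This proves the nontrivial implication of (i): if the EIMC holds for $\mathcal{L}^{N}/K$ via some $\zeta_{S}^{(1)}\in K_{1}(\mathcal{Q}(\overline{\mathcal{G}}))$, then $\zeta_{S}:=(\zeta_{S}^{(1)},y_{S})\in K_{1}(\mathcal{Q}(\mathcal{G}))$ verifies the EIMC for $\mathcal{L}/K$. The reverse implication is immediate from Proposition \ref{prop:funtorialities}(i) and requires no hybrid hypothesis.

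For (ii): $SK_{1}(\mathcal{Q}(\mathcal{G}))=SK_{1}(\mathcal{Q}(\overline{\mathcal{G}}))\times SK_{1}((1-e_{N})\mathcal{Q}(\mathcal{G}))$, and the second factor vanishes as $(1-e_{N})\mathcal{Q}(\mathcal{G})$ contains no skewfields. The solution set of the EIMC for $\mathcal{L}/K$, when nonempty, is a torsor under $\ker\partial\cap\ker\nr$, whose $(1-e_{N})$-component lies in $SK_{1}((1-e_{N})\mathcal{Q}(\mathcal{G}))=0$; hence the torsor is a singleton exactly when its $e_{N}$-component is, i.e.\ exactly when the EIMC with uniqueness holds for $\mathcal{L}^{N}/K$. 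Together with (i) this gives (ii). For (iii), apply Corollary \ref{cor:EIMC-unconditional} to the admissible extension $\mathcal{L}^{N}/K$, whose Galois group $\overline{\mathcal{G}}$ has Sylow $p$-subgroup $\overline{\mathcal{P}}$: if $(\mathcal{L}^{N})^{\overline{\mathcal{P}}}/\Q$ is abelian then the EIMC holds for $\mathcal{L}^{N}/K$, hence for $\mathcal{L}/K$ by (i). Finally, for (iv), if $\mathcal{L}^{N}/\Q$ is abelian then so is $(\mathcal{L}^{N})^{\overline{\mathcal{P}}}/\Q$, so the EIMC holds for $\mathcal{L}/K$ by (iii); moreover $\overline{\mathcal{G}}$ is abelian, so $\mathcal{Q}(\overline{\mathcal{G}})$ is commutative and $SK_{1}(\mathcal{Q}(\mathcal{G}))=0$, whence uniqueness follows from existence by Remark \ref{rmk:SK1}.

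The substantive content is all in the first paragraph: checking that the $e_{N}$-decomposition is compatible with $\partial$, with $\nr$, with the identification $\zeta(\mathcal{Q}(\mathcal{G}))^{\times}\cong\Hom^{\ast}(R_{p}(\mathcal{G}),\mathcal{Q}^{c}(\Gamma_{K})^{\times})$, and with the functoriality map $\quot^{\mathcal{G}}_{\mathcal{G}/N}$; once this is in place, (i)--(iv) are formal consequences of Theorem \ref{thm:EIMC-MaxOrd} (via Corollary \ref{cor:EIMC-MaxOrd}), Corollary \ref{cor:hybrid-implies-nr-surjective} and Corollary \ref{cor:EIMC-unconditional}. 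The one spot I expect to require genuine care is the exact matching of the $(1-e_{N})$-component of the EIMC datum with the output of the maximal-order variant of Ritter--Weiss -- in particular confirming that $\partial(y_{S})$ emerges with the correct sign $-(1-e_{N})[C_{S}^{\bullet}(\mathcal{L}/K)]$ -- since Corollary \ref{cor:EIMC-MaxOrd} is phrased via the image of $y_{S}$ under $K_{1}\to K_{0}$ rather than via $\partial$ directly, and one must also verify that $\Lambda(\mathcal{G})(1-e_{N})\otimes^{\mathbb{L}}_{\Lambda(\mathcal{G})}C_{S}^{\bullet}(\mathcal{L}/K)$ represents the $(1-e_{N})$-component of $[C_{S}^{\bullet}(\mathcal{L}/K)]$.
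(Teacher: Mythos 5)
Your proof is correct and follows essentially the same route as the paper's: split along the central idempotent $e_{N}$, identify the $e_{N}$-part with the data for $\mathcal{L}^{N}/K$ via Proposition \ref{prop:funtorialities}(i), dispatch the $(1-e_{N})$-part unconditionally using the maximal-order variant (Theorem \ref{thm:EIMC-MaxOrd} through Corollary \ref{cor:EIMC-MaxOrd} with $e=1-e_{N}$, legitimised by Corollary \ref{cor:hybrid-implies-nr-surjective} and Remark \ref{rmk:nr-surjective}), and handle uniqueness via vanishing of the relevant $SK_{1}$. The only cosmetic difference is that you prove (iv) directly from (iii) plus $SK_{1}(\mathcal{Q}(\mathcal{G}))=0$ rather than routing through (ii) as the paper does; both are valid and amount to the same computation.
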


\begin{remark}
Under the hypotheses in (iii), $\overline{\mathcal{P}}$ is necessarily normal in $\overline{\mathcal{G}}$
and thus is its unique Sylow $p$-subgroup.
\end{remark}

\begin{proof}[Proof of Theorem \ref{thm:EIMC-break-down}]
By assumption $\Lambda(\mathcal{G})$ decomposes into $\Lambda(\mathcal{G}) e_{N} \oplus \mathfrak{M}(\mathcal{G}) (1 - e_{N})$
for some maximal order $\mathfrak{M}(\mathcal{G})$. This induces a decomposition of relative $K$-groups
\begin{eqnarray*}
K_{0}(\Lambda(\mathcal{G}), \mathcal{Q}(\mathcal{G}))
& \simeq & K_{0}(\Lambda(\mathcal{G}) e_{N}, \mathcal{Q}(\mathcal{G}) e_{N}) \times K_{0}(\mathfrak{M}(\mathcal{G}) (1 - e_{N}), \mathcal{Q}(\mathcal{G}) (1-e_{N}))\\
 & \simeq & K_{0}(\Lambda(\mathcal{G}/N), \mathcal{Q}(\mathcal{G}/N)) \times K_{0}(\mathfrak{M}(\mathcal{G}) (1 - e_{N}), \mathcal{Q}(\mathcal{G}) (1-e_{N}))
\end{eqnarray*}
which maps $[C_{S}^{\bullet}(\mathcal{L}/K)]$ to the pair
$([C_{S}^{\bullet}(\mathcal{L}^{N}/K)], [\mathfrak{M}(\mathcal{G}) (1 - e_{N}) \otimes_{\Lambda(\mathcal{G})}^{\mathbb{L}} C_{S}^{\bullet}(\mathcal{L}/K)])$
by Proposition \ref{prop:funtorialities} (i).
Similarly, we have a decomposition
\begin{eqnarray*}
\zeta(\mathcal{Q}(\mathcal{G}))^{\times} & \simeq & \zeta(\mathcal{Q}(\mathcal{G}) e_{N})^{\times} \times \zeta(\mathcal{Q}(\mathcal{G})(1-e_{N}))^{\times} \\
& \simeq & \zeta(\mathcal{Q}(\mathcal{G}/N))^{\times} \times \zeta(\mathcal{Q}(\mathcal{G})(1-e_{N}))^{\times}
\end{eqnarray*}
which maps $\Phi_{S}(\mathcal{L}/K)$ to the pair $(\Phi_{S}(\mathcal{L}^{N}/K), \Phi_{S}(\mathcal{L}/K) (1-e_{N}))$.
Hence (i) follows from Corollary \ref{cor:EIMC-MaxOrd} and Remark \ref{rmk:nr-surjective}.

Part (ii) follows from (i) if
\[
SK_{1}((1-e_{N})\mathcal{Q}(\mathcal{G})) := 
\ker(\nr: K_{1}((1-e_{N})\mathcal{Q}(\mathcal{G})) \longrightarrow \zeta((1-e_{N})\mathcal{Q}(\mathcal{G}))^{\times})
\]
vanishes.
This is indeed the case since Corollary \ref{cor:hybrid-implies-nr-surjective} implies that $\mathcal{Q}(\mathcal{G})(1-e_{N})$ is a direct product
of matrix rings over fields.
Part (iii) follows from part (i) and Corollary \ref{cor:EIMC-unconditional}.
Finally, part (iv) follows from parts (ii) and (iii) and Remark \ref{rmk:SK1}.
\end{proof}

The following theorem is useful in applications to proving results about finite Galois extensions of number fields.

\begin{theorem}\label{thm:EIMC-start-with-num-fields}
Let $L/K$ be a finite Galois extension of totally real number fields with Galois group $G$.
Let $p$ be an odd prime and let $L_{\infty}$ be the cyclotomic $\Z_{p}$-extension of $L$.
Let $P$ be a Sylow $p$-subgroup of $G$. Then the following statements hold.
\begin{enumerate}
\item $L_{\infty}/K$ is an admissible one-dimensional $p$-adic Lie extension.
\item If $p \nmid |G|$ then the EIMC with uniqueness holds for $L_{\infty}/K$.
\item If $L^{P}/\Q$ is abelian then the EIMC holds for $L_{\infty}/K$.
\end{enumerate}
Suppose further that $\Z_{p}[G]$ is $N$-hybrid.
Let $\overline{P}$ be a Sylow $p$-subgroup of $\overline{G}:=\Gal(L^{N}/K) \simeq G/N$.
Then the following statements hold.
\begin{enumerate}
\setcounter{enumi}{3}
\item If $L^{N}/\Q$ is abelian then the EIMC with uniqueness holds for $L_{\infty}/K$.
\item If $(L^{N})^{\overline{P}}/\Q$ is abelian then the EIMC holds for $L_{\infty}/K$.
\end{enumerate}
\end{theorem}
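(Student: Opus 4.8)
The plan is to deduce all five assertions from results already established, chiefly Theorem~\ref{thm:EIMC-for-p-not-dividing-ordH}, Corollary~\ref{cor:EIMC-unconditional}, Proposition~\ref{prop:hybrid-codescent} and Theorem~\ref{thm:EIMC-break-down}, once the relevant Galois groups have been set up. Throughout I would write $\mathcal{G} := \Gal(L_{\infty}/K)$, $H := \Gal(L_{\infty}/K_{\infty})$ and $\Delta := \Gal(L_{\infty}/L)$. Since $L/K$ is Galois, $\Delta$ is normal in $\mathcal{G}$ with $\mathcal{G}/\Delta \simeq G$; restriction identifies $\Delta$ with $\Gal(K_{\infty}/K\cap L)$, a finite-index subgroup of $\Gal(K_{\infty}/K)$, so $\Delta \simeq \Z_{p}$ is a normal pro-$p$ subgroup of $\mathcal{G}$, and likewise $H$ is identified with $\Gal(L/L\cap K_{\infty})$, a subgroup of $G$. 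Statement (i) is then an immediate check of Definition~\ref{def:one-dim-adm}: $L_{\infty}$ is totally real because $p$ is odd and $L$ is totally real, $L_{\infty} \supseteq K_{\infty}$, and $[L_{\infty}:K_{\infty}]$ divides $[L:K]$ and so is finite. Statement (ii) follows because $p \nmid |G|$ forces $p \nmid |H|$, whence Theorem~\ref{thm:EIMC-for-p-not-dividing-ordH} applies directly.

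For (iii) the key observation is that a Sylow pro-$p$ subgroup $\mathcal{P}$ of $\mathcal{G}$ automatically contains the normal pro-$p$ subgroup $\Delta$, and that its image in $G = \mathcal{G}/\Delta$ is a Sylow $p$-subgroup of $G$. Choosing $\mathcal{P}$ so that this image equals $P$ (or invoking conjugacy of Sylow subgroups together with the fact that containment in a cyclotomic field is preserved under $\overline{\Q}$-conjugation), one gets $\mathcal{L}^{\mathcal{P}} = (L_{\infty}^{\Delta})^{P} = L^{P}$; so $L^{P}/\Q$ abelian gives $\mathcal{L}^{\mathcal{P}}/\Q$ abelian, and Corollary~\ref{cor:EIMC-unconditional} yields the EIMC for $L_{\infty}/K$. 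For (iv) and (v) I would apply Proposition~\ref{prop:hybrid-codescent} to $L/K$: since $\Z_{p}[G]$ is $N$-hybrid, $N$ identifies with the normal subgroup $\Gal(L_{\infty}/(L^{N})_{\infty})$ of $\mathcal{G}$, $\Lambda(\mathcal{G})$ is $N$-hybrid, and $\mathcal{L}^{N} = L_{\infty}^{N} = (L^{N})_{\infty}$. For (iv) one then observes that $L^{N}/\Q$ is abelian if and only if $(L^{N})_{\infty}/\Q = \mathcal{L}^{N}/\Q$ is abelian (if $F \subseteq \Q(\zeta_{m})$ then $F_{\infty} \subseteq \Q(\zeta_{mp^{\infty}})$, and the converse is trivial), so Theorem~\ref{thm:EIMC-break-down}(iv) gives the EIMC with uniqueness for $L_{\infty}/K$. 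For (v) one repeats the Sylow argument of (iii) with $L$ replaced by $L^{N}$: a Sylow $p$-subgroup $\overline{\mathcal{P}}$ of $\overline{\mathcal{G}} = \Gal(\mathcal{L}^{N}/K)$ contains the normal pro-$p$ subgroup $\Gal((L^{N})_{\infty}/L^{N})$ and hence maps onto a Sylow $p$-subgroup $\overline{P}$ of $\overline{G} = \Gal(L^{N}/K)$, giving $(\mathcal{L}^{N})^{\overline{\mathcal{P}}} = (L^{N})^{\overline{P}}$, whereupon Theorem~\ref{thm:EIMC-break-down}(iii) gives the EIMC for $L_{\infty}/K$.

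The only genuinely delicate point in the argument is the routine profinite-group bookkeeping used in (iii) and (v): that a normal pro-$p$ subgroup lies inside every Sylow pro-$p$ subgroup, and that the image of a Sylow pro-$p$ subgroup under a surjection of profinite groups is a Sylow $p$-subgroup of the target. Once these standard facts are in place, every part of the theorem is a direct citation of an earlier result, and no substantially new difficulty arises.
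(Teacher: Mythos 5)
Your proposal is correct and follows essentially the same route as the paper: part~(i) is a direct check, part~(ii) uses Theorem~\ref{thm:EIMC-for-p-not-dividing-ordH} via $H\leq G$, part~(iii) uses the normal open pro-$p$ subgroup $\Gal(L_\infty/L)$ to identify $\mathcal{L}^{\mathcal{P}}$ with $L^{P}$ and then cites Corollary~\ref{cor:EIMC-unconditional}, and parts~(iv) and (v) combine Proposition~\ref{prop:hybrid-codescent} with Theorem~\ref{thm:EIMC-break-down}. The only cosmetic difference is that for (v) you invoke Theorem~\ref{thm:EIMC-break-down}(iii) directly rather than combining part~(iii) with Theorem~\ref{thm:EIMC-break-down}(i), but these are logically the same deduction; also note the small slip that $\Delta=\Gal(L_\infty/L)$ identifies with $\Gal(K_\infty/L\cap K_\infty)$ rather than $\Gal(K_\infty/K\cap L)$.
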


\begin{remark}
Under the hypothesis in (iii), $P$ is necessarily normal in $G$ and thus is its unique
Sylow $p$-subgroup. In part (v), we have $(L^{N})^{\overline{P}}=L^{NP}=L^{N \rtimes P}$.
\end{remark}

\begin{proof}[Proof of Theorem \ref{thm:EIMC-start-with-num-fields}]
For part (i) it is trivial to check that the conditions of Definition \ref{def:one-dim-adm} are satisfied.
We adopt the setup and notation of \S \ref{subsec:algebras-in-Iwasawa-thy}.
Since $H$ identifies with a subgroup  of $G$, part (ii) follows from Theorem \ref{thm:EIMC-for-p-not-dividing-ordH}
(in fact $H$ identifies with $G$ in this case.)
Let $\mathcal{P}$ be a Sylow $p$-subgroup of $\mathcal{G}$.
Since $\Gamma_{L}:=\Gal(L_{\infty}/L)$ is an open normal pro-$p$ subgroup of $\mathcal{G}$
we have $\Gamma_{L} \leq \mathcal{P}$ and $\mathcal{P}$ maps to $P$ under the natural 
projection $\mathcal{G} \rightarrow \mathcal{G}/\Gamma_{L} \simeq G$.
Hence $L^{P}=L_{\infty}^{\mathcal{P}}$ and so part (iii) follows from Corollary \ref{cor:EIMC-unconditional}.

Now suppose that $\Z_{p}[G]$ is $N$-hybrid.
Proposition \ref{prop:hybrid-codescent} says that $N$ identifies with a normal subgroup of $\mathcal{G}$ which is also a normal subgroup of $H$;
moreover, both $\Z_{p}[H]$ and $\Lambda(\mathcal{G})$ are $N$-hybrid. 
Note that $(L_{\infty})^{N}=(L^{N})_{\infty}$.
If $L^{N}/\Q$ is abelian then $L_{\infty}^{N}/\Q$ is also abelian and so part (iv) follows from 
Theorem \ref{thm:EIMC-break-down} (iv).
Part (v) now follows from (iii) and Theorem \ref{thm:EIMC-break-down} (i).
\end{proof}

\begin{corollary}\label{cor:EIMC-Frobenius}
Let $L/K$ be a finite Galois extension of totally real number fields with Galois group $G$.
Suppose that $G = N \rtimes V$ is a Frobenius group with Frobenius kernel $N$ and abelian Frobenius complement $V$.
Further suppose that $L^{N}/\Q$ is abelian (in particular, this is the case when $K=\Q$).
Let $p$ be an odd prime and let $L_{\infty}$ be the cyclotomic $\Z_{p}$-extension of $L$.
Then the following statements hold.
\begin{enumerate}
\item If $p \nmid |N|$ then the EIMC with uniqueness holds for $L_{\infty}/K$.
\item If $N$ is a $p$-group then the EIMC holds for $L_{\infty}/K$.
\item If $N$ is an $\ell$-group for any prime $\ell$ then the EIMC holds for $L_{\infty}/K$.\\
(This includes the cases $\ell=2$ and $\ell=p$.)
\end{enumerate}
In particular, (iii) holds in the following cases:
\begin{itemize}
\item $G \simeq \Aff(q)$, where $q$ is a prime power (see Example \ref{ex:affine}),
\item $G \simeq C_{\ell} \rtimes C_{q}$, where $q<\ell$ are distinct primes such that $q \mid (\ell-1)$ and $C_{q}$ acts on $C_{\ell}$ via an embedding $C_{q} \hookrightarrow \Aut(C_{\ell})$ (see Example \ref{ex:metacyclic}),
\item $G$ is isomorphic to any of the Frobenius groups constructed in Example \ref{ex:non-abelian-kernel}.
\end{itemize}
\end{corollary}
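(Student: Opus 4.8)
The plan is to reduce everything to Theorem~\ref{thm:EIMC-start-with-num-fields}, exploiting that a Frobenius group is a rich source of hybrid $p$-adic group rings. First I would record two elementary facts about the setup: the Frobenius kernel $N$ is nontrivial (since the complement $V$ is a proper subgroup of $G$, so $|N| = [G:V] > 1$), and $|N|$ and $|V| = [G:N]$ are coprime by Theorem~\ref{thm:frob-kernel}(i); moreover the standing hypothesis that $L^{N}/\Q$ is abelian is in force throughout, and when $K = \Q$ it holds automatically because $\Gal(L^{N}/\Q) \simeq G/N \simeq V$ is abelian.

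For part~(i), since $p \nmid |N|$, Proposition~\ref{prop:frob-N-hybrid} shows that $\Z_{p}[G]$ is $N$-hybrid; as $L^{N}/\Q$ is abelian, Theorem~\ref{thm:EIMC-start-with-num-fields}(iv) then gives the EIMC with uniqueness for $L_{\infty}/K$. For part~(ii), if $N$ is a $p$-group then $p \nmid |V| = [G:N]$, so the normal $p$-subgroup $N$ accounts for the full $p$-part of $|G|$ and is therefore the unique Sylow $p$-subgroup $P$ of $G$; hence $L^{P} = L^{N}$ is abelian over $\Q$ and Theorem~\ref{thm:EIMC-start-with-num-fields}(iii) yields the EIMC for $L_{\infty}/K$. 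Part~(iii) is then a case split: if $N$ is an $\ell$-group with $\ell \neq p$ then $p \nmid |N|$ and we invoke~(i), while if $\ell = p$ we invoke~(ii); since $p$ is odd this in particular covers $\ell = 2$, and it trivially covers $\ell = p$.

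For the list of explicit groups I would simply verify that each family satisfies the hypotheses of the corollary and that its Frobenius kernel has prime-power order, so that part~(iii) applies. For $G \simeq \Aff(q)$ with $q$ a power of a prime $\ell$, the kernel is $N \simeq \F_{q}$, an $\ell$-group, and the complement $V \simeq \F_{q}^{\times} \simeq C_{q-1}$ is cyclic (Example~\ref{ex:affine}); for $G \simeq C_{\ell} \rtimes C_{q}$ the kernel is $N = C_{\ell}$ and the complement $C_{q}$ is cyclic (Example~\ref{ex:metacyclic}); and for the groups of Example~\ref{ex:non-abelian-kernel} the kernel $N$ is a group of unipotent upper-triangular matrices over a finite field --- hence of prime-power order --- while the complement $\langle h \rangle$ is cyclic. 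I do not anticipate any genuine obstacle: the substantive work --- the hybrid decomposition of $\Lambda(\mathcal{G})$ and the functorial breaking-up of the EIMC --- has already been carried out in \S\ref{sec:hybrid-Iwasawa-alg} and \S\ref{sec:EIMC}, and the only point requiring care is keeping track of which prime divides $|N|$ and which divides $|V|$ so as to land in the correct hypotheses of Theorem~\ref{thm:EIMC-start-with-num-fields}.
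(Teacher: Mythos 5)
Your argument is correct and coincides with the paper's own proof: part (i) via Proposition~\ref{prop:frob-N-hybrid} and Theorem~\ref{thm:EIMC-start-with-num-fields}(iv), part (ii) via Theorem~\ref{thm:EIMC-start-with-num-fields}(iii) with $P=N$ (using that $|N|$ and $[G:N]$ are coprime so that $N$ is the Sylow $p$-subgroup), and part (iii) by combining (i) and (ii). The additional remarks you supply---that $L^{N}/\Q$ abelian holds automatically when $K=\Q$ because $V$ is abelian, and the verification that each listed family has a prime-power Frobenius kernel with abelian complement---are exactly the routine checks the paper leaves implicit.
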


\begin{proof}
Part (i) follows from Proposition \ref{prop:frob-N-hybrid} and Theorem \ref{thm:EIMC-start-with-num-fields} (iv).
Part (ii) follows from  Theorem \ref{thm:EIMC-start-with-num-fields} (iii) with $P=N$.
Part (iii) is just the combination of (i) and (ii).
\end{proof}

\begin{remark}
Let $L$ be a totally real finite Galois extension of $\Q$ with Galois group $G$.
Let $p$ be an odd prime and let $L_{\infty}$ be the cyclotomic $\Z_{p}$-extension of $L$.
If $G$ is abelian then the EIMC with uniqueness for $L_{\infty}/\Q$ holds by Corollary \ref{cor:EIMC-unconditional} and Remark \ref{rmk:SK1}.
If $G$ is an $\ell$-group for any prime $\ell$ then EIMC for $L_{\infty}/\Q$ holds: if $p=\ell$ this follows from the fact that the $\mu=0$
hypothesis is known in this case (see Remark \ref{rmk:mu=0}), and if $p \neq \ell$ then the EIMC with uniqueness for $L_{\infty}/\Q$
holds by Theorem \ref{thm:EIMC-for-p-not-dividing-ordH}. 
Thus it remains to consider finite groups $G$ that are neither abelian nor $\ell$-groups for any prime $\ell$. 
Among all such groups of order $\leq 10^{6}$ there are $568,220$ metabelian Frobenius groups
(see \cite[Remark 11.13 (A)]{MR1600514}), which in particular have abelian Frobenius complement
and so satisfy the hypotheses of Corollary \ref{cor:EIMC-Frobenius}. 
In fact, there are many more pairs ($G$, $p$) with $p$ an odd prime dividing $|G| \leq 10^{6}$ that satisfy the hypotheses
of Theorem \ref{thm:EIMC-start-with-num-fields} (v) in the case $K=\Q$ (in particular, recall that if $\Z_{p}[G]$ is $N$-hybrid for some non-trivial $N$ then $G$ need not be a Frobenius group).
\end{remark}

\begin{example} \label{ex:EIMC-dicyclic}
Let $p$ be an odd prime.
Let $V = \Dic_{p}$ be dicyclic of order $4p$ and $G = N \rtimes V$ be a Frobenius group as in Example \ref{ex:dicyclic-complement}.
Let $L/K$ be a finite Galois extension of totally real number fields with Galois group $\Gal(L/K) \simeq G$.
The unique Sylow $p$-subgroup $P$ of $\Dic_{p}$ coincides with the commutator subgroup
and thus $L^{N \rtimes P}/K$ is cyclic of order $4$.
If we further assume that $L^{N \rtimes P}/\Q$ is abelian (for instance, assume that $K=\Q$),
then Theorem \ref{thm:EIMC-start-with-num-fields} (v) implies that the EIMC holds for $L_{\infty}/K$,
where $L_{\infty}$ is the cyclotomic $\Z_{p}$-extension of $L$.
Note that this cannot be deduced from Corollary \ref{cor:EIMC-Frobenius}
because although $G$ is a Frobenius group, it has a non-abelian Frobenius complement.
\end{example}

\begin{example} \label{ex:EIMC-modified-affine}
Let $q=\ell^{n}$ be a prime power and consider the group $\F_{q} \rtimes (\F_{q}^{\times} \rtimes \langle \phi \rangle)$
of Example \ref{ex:affine-Frobenius}. Let $p \mid (q-1)$ be an odd prime that does not divide $n$.
Then the group $\mu_{p}(\F_{q})$ of $p$-power roots of unity in $\F_{q}$ is non-trivial and we put
$G := \F_{q} \rtimes (\mu_{p}(\F_{q}) \rtimes \langle \phi \rangle)$ and $U := \F_{q} \rtimes \mu_{p}(\F_{q}) \unlhd G$.
Then Example \ref{ex:affine-Frobenius} and Proposition \ref{prop:hybrid-basechange-up} imply that $\Z_{p}[G]$ is
$\F_{q}$-hybrid.
Now assume that $L/K$ is a Galois extension of totally real number fields with $\Gal(L/K) \simeq G$
and let $L_{\infty}$ be the cyclotomic $\Z_{p}$-extension of $L$.
If $L^{U}/\Q$ is abelian (for instance if $K=\Q$), then Theorem \ref{thm:EIMC-start-with-num-fields} (v) implies that the EIMC holds
for $L_{\infty}/K$.
\end{example}

\begin{example}\label{ex:EIMC-S4}
Let $L/K$ be a finite Galois extension of totally real number fields with Galois group $\Gal(L/K) \simeq S_{4}$.
Let $p$ be an odd prime and let $L_{\infty}$ be the cyclotomic $\Z_{p}$-extension of $L$.
Then Theorem \ref{thm:EIMC-start-with-num-fields} (ii) shows that  the EIMC with uniqueness holds for $L_{\infty}/K$ when $p>3$.
Now further assume that $L^{A_{4}}/\Q$ is abelian (in particular, this is the case when $K=\Q$) and consider the case $p=3$.
The group ring $\Z_{3}[S_{4}]$ is $V_{4}$-hybrid as shown in Example \ref{ex:S4-A4-V4}.
Moreover, the Sylow $3$-subgroup of $S_{4}/V_{4} \simeq S_{3}$ is $A_{3} \simeq C_{3}$ and we have
$(L^{V_{4}})^{A_{3}}=L^{A_{4}}$, so the EIMC for $L_{\infty}/K$ follows from Theorem \ref{thm:EIMC-start-with-num-fields} (v).
Let $\mathcal{G}=\Gal(L_{\infty}/K)$.
Then as shown in Example \ref{ex:S4-V4-II} we have $\mathcal{G} \simeq S_{4} \times \Gamma_{K}$,
and so no skewfields occur in the Wedderburn decomposition of $\mathcal{Q}(\mathcal{G})$.
Therefore, the EIMC with uniqueness also holds for $L_{\infty}/K$ when $p=3$.
\end{example}

\subsection{Remarks on the higher dimension case} \label{subsec:higher-rk}
In \cite{MR3091976}, Kakde proved a more general version of the EIMC for admissible
$p$-adic Lie extensions of arbitrary (finite) dimension under a suitable version of the $\mu=0$ hypothesis.
This used a strategy of Burns and Kato to reduce the proof to the one-dimensional case discussed above (see \cite{MR3294653}).
We briefly discuss some of the obstacles to generalising the approach of this article to prove higher dimension cases
of the EIMC when a suitable $\mu=0$ hypothesis is not known.
A serious obstacle is that a certain `$\mathfrak{M}_{H}(G)$-conjecture' is required to even formulate
the higher dimension version of the EIMC, and that this is presently only known to hold under a suitable $\mu=0$ hypothesis
(see \cite[p.\ 5]{zbMATH06148870} and \cite{MR2905532}).
Another problem is that a higher dimension version of Theorem \ref{thm:EIMC-MaxOrd} (the `maximal order variant of the EIMC')
has not been proven unconditionally.
Finally, and perhaps most importantly, it is not clear how the notion of a hybrid Iwasawa algebra generalises to the higher dimension case.

\bibliography{hybrid-iwasawa-Bib}{}
\bibliographystyle{amsalpha}

\end{document}